\newcommand{\code}[1]{\textsf{#1}}
\newcommand{\mydes}[3]{  

\smallskip
 
\noindent\begin{tabular}{@{}p{0.05cm}lp{13cm}}  
\multicolumn{3}{l}{\sf #1}\\ 
& {\sf Given:\hspace*{-0.2cm}}&#2 \\
& {\sf Return:\hspace*{-0.2cm}}&#3
\end{tabular}

\medskip

}
\newtheoremstyle{mythm}                   
{6pt}
{6pt}
{\it}
{}
{\bf}
{.}
{.5em}
{}
\newtheoremstyle{mydef}                   
{6pt}
{6pt}
{}
{}
{\bf}
{.}
{.5em}
{}
\newtheoremstyle{myrem}                   
{6pt}
{6pt}
{}
{}
{\bf}
{.}
{.5em}
{}
\theoremstyle{mythm}      
\newtheorem{theorem}{Theorem}[section]
\newtheorem{proposition}[theorem]{Proposition}
\newtheorem{lemma}[theorem]{Lemma}
\newtheorem{corollary}[theorem]{Corollary}
\theoremstyle{mydef}      
\newtheorem{definition}[theorem]{Definition}
\newtheorem{example}[theorem]{Example}
\theoremstyle{myrem}
\newtheorem{remark}[theorem]{Remark}
\numberwithin{equation}{section}
\newcounter{ithmcount}
\newenvironment{items}{
\begin{list}{$\alph{item})$}
{\labelwidth18pt \leftmargin20pt \topsep3pt \itemsep5pt \parsep0pt}}
{\end{list}}
\subjclass[2000]{}
\newcommand{\Aut}{{\rm Aut}}
\newcommand{\GL}{{\rm GL}}
\renewcommand{\leq}{\leqslant}
\begin{document}
 
\vspace*{-0.6cm}
   
\title{Polynomial time isomorphism tests of black-box type groups of most orders}
\subjclass[2000]{}
\author[H. Dietrich]{Heiko Dietrich}
\address{School of Mathematics, Monash University, 
Clayton VIC 3800, Australia}
\email{Heiko,Dietrich@Monash.Edu}
\author[J.B. Wilson]{James B.\ Wilson}
\address{Department of Mathematics, Colorado Sate University, Fort Collins Colorado, 80523, USA}
\email{James.Wilson@ColoState.Edu}

\keywords{finite groups, abelian groups, meta-cyclic groups, group isomorphisms}
\date{\today} 

\begin{abstract}~\\ 
{\color{blue}{\bf Important comment:} This draft contains some erroneous statements and some proofs have gaps;  please refer to the following publications for corrections:\\
    $\bullet$ {\it Isomorphism testing of groups of cube-free order.} J.\ Algebra 545 (2020) 174-197.\\
    $\bullet$ {\it Group isomorphism is nearly-linear time for most orders.} FOCS 2021 (accepted, arXiv:2011.03133)\\ 
    $\bullet $ The content on \emph{groups of black-box type} will be revised in a different draft.\\}
    
\noindent We consider the isomorphism problem for finite  abelian  groups and  finite meta-cyclic groups. We prove
that for a dense set of positive integers $n$, isomorphism testing for
abelian groups of black-box type of order $n$ can be done in
time polynomial in $\log n$. We also prove that for a dense set of orders $n$
with given prime factors, one can test isomorphism for coprime meta-cyclic
groups of black-box type of order $n$  in time polynomial in $\log n$. 
Prior methods for these two classes of groups have running times exponential in $\log n$.\\

\end{abstract}

\thanks{This research was partially supported by the Simons Foundation, the Mathematisches Forschungsinstitut Oberwolfach, and  NSF grant DMS-1620454. 
The first author thanks the Lehrstuhl D of the RWTH Aachen for the great hospitality during his Simon Visiting Professorship in Summer 2016. Both authors thank Jeff Achter for discussion on integer proportions and
Takunari Miyazaki for help with black-box groups.}

\maketitle


\vspace*{-1cm}


\section{Introduction}

Groups are one of the most prominent algebraic 
structures in science since they capture the natural concept of symmetry. Yet, it is still a difficult problem to decide whether two finite groups are isomorphic. Despite abundant knowledge about groups, presently no one has provided an isomorphism test for all finite groups 
whose complexity  improves substantively over brute-force (see \cite{Grochow}).  In the most general form,
there is no known polynomial-time isomorphism test even for non-deterministic Turing machines, that is,
the problem may lie outside the complexity classes NP and co-NP (see \cite{BS84}*{Corollary~4.9}).
At the time of this writing, the available implementations of algorithms that test 
isomorphism on broad classes of groups can run out of memory or run for days on examples 
of orders only a few thousand, see \cite{BMW}*{Section~1.1} and \cite{PART2}*{Table~1}.  
For comparison, isomorphism testing of general graphs on as 
many vertices can be performed in seconds on an average computer \cite{Traces}.  To 
isolate the critical difficulties in group isomorphism it often helps to consider special 
classes of groups, see \citelist{\cite{BQ:tower}\cite{BCGQ}\cite{BMW}\cite{cf}\cite{Wilson:profile}} for recent work.

In this paper we investigate efficient isomorphism tests suitable for abelian and meta-cyclic groups of most finite 
orders -- not to be confused with most finite groups. Recently Kavitha \cite{kavitha} proved that isomorphism testing of abelian groups of order $n$ is $O(n)$, that is, in linear time if the input is the Cayley table of the group. This is rarely the case in practical applications, so here we work with groups of \emph{black-box type} where by assumption we only 
know how to multiply, invert, test equality, and generate the group. The input size of a black box group of order $n$ can be made $O(\log n)$, so such groups can be exponentially larger than the data it takes to specify the group.  It is that expressive 
power that makes this model so useful.  However, it is exceedingly hard to learn anything about
these groups let alone decide isomorphism.  Iliopoulos \cite{Iliopoulos} demonstrated that 
most questions for abelian groups of black-box type are at least as hard as the discrete 
logarithm problem and integer factorization (neither of those problems seems to have  an efficient 
deterministic, randomized, or reliable heuristic solution).  Nevertheless, we prove that 
isomorphism testing of abelian groups is efficient for almost all group orders.  Based on 
work of Karagiorgos-Poulakis \cite{KP} and enumeration results of Erd\H{o}s-P\'{a}lfy, we 
prove the following theorem in Section \ref{secBBAb}. Recall that  a set $\mathcal{D}$ of integers 
is \emph{dense} if $\lim_{n\to\infty} |\mathcal{D}\cap\{1,2,\ldots,n\}|/n =1$.  
 
\begin{theorem}\label{thm:iso-abelian-BB}   
There is a dense set of integers $\mathcal{D}$ and a deterministic isomorphism test for 
abelian groups of black-box type with known order $n\in\mathcal{D}$ that runs in time 
polynomial in the input size.
\end{theorem}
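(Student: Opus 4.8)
The plan is to decide $G\cong H$ by comparing Sylow subgroups, exploiting that for a dense set of orders $n$ only very few primes can have a non‑cyclic Sylow subgroup, and each such subgroup is so small that it can be analysed by brute force, while the (possibly unfactored) square‑free part of $n$ is handled for free because an abelian group of square‑free order is cyclic.

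First I would fix a slowly growing function, say $B(n)=\log n$, and let $\mathcal{D}$ be the set of integers $n\geq 2$ whose \emph{powerful part} $\prod_{p^{2}\mid n}p^{v_{p}(n)}$ is at most $B(n)$; equivalently, every prime power $p^{k}$ with $k\geq 2$ and $p^{k}\mid n$, $p^{k+1}\nmid n$, satisfies $p^{k}\leq B(n)$. A crude count shows that the number of $n\leq x$ failing this is at most $x\sum_{q}1/q$, the sum over prime powers $q=p^{k}$ with $k\geq 2$ and $q>B(x)$, and this tail is $O\!\left(B(x)^{-1/2}\right)\to 0$; hence $\mathcal{D}$ is dense. (The sharper enumeration estimates of Erd\H{o}s and P\'alfy give the precise density of orders admitting only boundedly many abelian groups, and can be substituted here; the crude bound already suffices for the statement.)

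Given black‑box abelian groups $G,H$ of known order $n\in\mathcal{D}$, the algorithm is: trial‑divide $n$ by all integers $d\leq\sqrt{B(n)}$ to recover every prime $p$ with $p^{2}\mid n$ together with $e_{p}=v_{p}(n)$; since $n\in\mathcal{D}$ these are exactly the primes dividing $n$ with multiplicity $\geq 2$, and $\prod p^{e_{p}}\leq B(n)$. For each such $p$ set $m_{p}=n/p^{e_{p}}$ and form $G_{p}=\langle\, g^{m_{p}} : g \text{ a given generator of } G\,\rangle$ (correct because $x\mapsto x^{m_{p}}$ is an endomorphism of the abelian group $G$ killing the prime‑to‑$p$ part and restricting to an automorphism of the $p$‑part, so its image is the Sylow $p$‑subgroup), and likewise $H_{p}$. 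Now either invoke the structure algorithm of Karagiorgos--Poulakis~\cite{KP}, or simply observe that $|G_{p}|=p^{e_{p}}\leq B(n)=\log n$ is polynomial in the input size, so one can enumerate $G_{p}$ by a breadth‑first search through products of its generators and, for each $k$, count the $x\in G_{p}$ with $x^{p^{k}}=1$. Since $G,H$ are abelian of the same order, $G\cong H$ iff $G_{p}\cong H_{p}$ for every $p\mid n$; for $p$ with $v_{p}(n)=1$ both Sylow subgroups are cyclic of order $p$, so nothing is needed (in particular the square‑free part of $n$ need never be factored), and two abelian $p$‑groups of equal order are isomorphic iff for every $k$ they have the same number of solutions of $x^{p^{k}}=1$, which we have just computed. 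Output ``isomorphic'' iff all these comparisons succeed. Every step is deterministic and costs $\operatorname{poly}(\log n)$ bit and group operations, so the procedure runs in time polynomial in the input size.

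The main obstacle is conceptual rather than computational: by Iliopoulos~\cite{Iliopoulos}, extracting the isomorphism type of an arbitrary black‑box abelian group of known order is at least as hard as integer factorisation and discrete logarithm, so no such theorem can hold for all $n$. The crux is therefore the choice of $\mathcal{D}$ in the second paragraph — isolating a density‑one set of orders on which every potentially non‑cyclic Sylow subgroup is forced to be small enough for brute force, and on which the remaining (square‑free) part contributes a cyclic group requiring no work — together with the density verification; once the arithmetic of $n$ has been constrained in this way, the algorithmic steps (exhibiting Sylow subgroups as power subgroups, enumerating small black‑box groups, reading off abelian types) are routine.
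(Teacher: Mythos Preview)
Your proposal is correct and follows the same architecture as the paper: choose a dense set $\mathcal{D}$ on which the non-square-free part of $n$ is forced to be of size polynomial in $\log n$, extract the corresponding Sylow subgroups as power subgroups, analyse them explicitly, and observe that the remaining square-free part contributes a cyclic factor determined by its order alone. The execution differs. The paper takes $\mathcal{D}$ to be the pseudo-square-free integers (Definition~\ref{def:eventually-sq-free}), splits at the threshold $\log\log n$, and invokes the Karagiorgos--Poulakis basis algorithm (Theorem~\ref{thm:KP}) on the entire small-prime part $A$; you take the (slightly larger) set of integers whose powerful part is at most $\log n$ and bypass \cite{KP} altogether by brute-force enumerating each small Sylow $p$-subgroup (of size at most $\log n$) and reading off its type from the counts $|\{x:x^{p^k}=1\}|$. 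Your route is more elementary for the abelian theorem in isolation; the paper's choice of $\mathcal{D}$ and its use of \cite{KP} are calibrated to feed into the meta-cyclic result (Theorem~\ref{thm:iso-meta-cyclic-BB}), where the $\log\log n$ threshold interacts with the Erd\H{o}s--P\'alfy separability condition and where two-way isomorphisms (hence {\sf EDL}) are genuinely needed. Two minor wrinkles: your density bound uses $B(x)$ while the definition of $\mathcal{D}$ uses $B(n)$, which is repaired by first discarding $n\leq\sqrt{x}$ exactly as in the paper's Lemma~\ref{lem:Count-1}; and your two descriptions of $\mathcal{D}$ (total powerful part $\leq\log n$ versus each exact prime-power factor $\leq\log n$) are not literally equivalent, though either suffices and your algorithm and trial-division bound are consistent with the latter.
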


 The set $\mathcal{D}$ in Theorem \ref{thm:iso-abelian-BB} consists of all positive integers $n$ which can be factorised as $n=p_1^{e_1}\ldots p_k^{e_k}b$, 
where $b$ is square-free, every prime divisor of $b$ is bigger than $\log\log n$, the 
primes $p_1,\ldots,p_k\leq \log\log n$ are all distinct and coprime to $b$, and each 
$p_i^{e_i}\leq \log n$, see Definition \ref{defKBfree} below. We note that $|\mathcal{D}\cap \{1,2,\ldots,10^k\}|/10^k$ for $k=3,\ldots,8$ is approximately  0.703, 0.757, 0.816, 0.822, 0.826, and 0.848, respectively.

Moving away from abelian groups the complications grow quickly, requiring more assumptions.
We consider meta-cyclic groups, that is, cyclic extensions of cyclic groups, and prove the 
following theorem in Section \ref{secBBmc}. Recall that a \emph{Las Vegas} algorithm is a 
randomized algorithm that returns an answer, but may fail to return within a user specified
probability of $\varepsilon>0$, see \cite{Seress}*{p.\ 14} for a discussion on randomised algorithms. We say a group $G$ is \emph{coprime meta-cyclic} if $G=U\ltimes K$ for cyclic subgroups $U,K\leq G$ of coprime order.

\begin{theorem}\label{thm:iso-meta-cyclic-BB}  
There is a dense set of integers $\hat{\mathcal{D}}$ and a poly\-nomial-time 
Las Vegas algorithm to test if a solvable group of black-box type and known factored 
order $n\in \hat{\mathcal{D}}$ is coprime meta-cyclic, and  to decide isomorphism of those groups.
\end{theorem}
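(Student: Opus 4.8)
The plan rests on the classical structure theory of finite groups all of whose Sylow subgroups are cyclic (the \emph{Z-groups}): these are exactly the coprime meta-cyclic groups, and for such a $G$ one has $G'$ cyclic, $G/G'$ cyclic, $\gcd(|G'|,|G/G'|)=1$ and $G=G'\rtimes T$ with a cyclic complement $T$ (Schur--Zassenhaus), while conversely $G'$ and $G/G'$ being cyclic of coprime orders forces $G$ to be coprime meta-cyclic. Writing $m=|G'|$ (necessarily odd), $k=|G/G'|$, $G'=\langle a\rangle$, $T=\langle b\rangle$ and $bab^{-1}=a^{r}$ with $r\in(\mathbb{Z}/m)^{*}$, one checks that the isomorphism type of $G$ is determined by $m$, $k$ and the subgroup $\langle r\rangle\leq(\mathbb{Z}/m)^{*}$: changing the generator $a$ fixes $r$ (since $(\mathbb{Z}/m)^{*}$ is abelian) and changing $b$ replaces $r$ by a generator of $\langle r\rangle$, while conversely any matching of $m$, $k$, $\langle r\rangle$ extends to an isomorphism. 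So three things must be done: recognise coprime meta-cyclicity, compute the triple $(m,k,\langle r\rangle)$, and compare two such triples. The only real difficulty is that, in the black-box model, extracting the exponent $r$ is a discrete-logarithm problem in $\langle a\rangle\cong C_{m}$.

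I would take $\hat{\mathcal D}$ to consist of those $n=p_{1}^{e_{1}}\cdots p_{k}^{e_{k}}b$ of the shape defining $\mathcal D$ (distinct $p_{i}\leq\log\log n$ coprime to the square-free $b$, all prime divisors of $b$ exceeding $\log\log n$, each $p_{i}^{e_{i}}\leq\log n$) which in addition satisfy (i) the ``small part'' $m_{0}=p_{1}^{e_{1}}\cdots p_{k}^{e_{k}}$ is at most $(\log n)^{C}$ for a fixed constant $C$, and (ii) no prime divisor of $b$ divides $q-1$ for any prime divisor $q$ of $b$. Then $\hat{\mathcal D}$ is dense: $\mathcal D$ is dense by Theorem~\ref{thm:iso-abelian-BB}; the set with $m_{0}>(\log n)^{C}$ has density $0$ by Markov's inequality applied to $\sum_{p\leq\log\log n}v_{p}(n)\log p$, whose mean is $\sim\log\log\log n$ by Mertens; and the set failing (ii) has density $0$ because the number of $n\leq x$ divisible by a product $\ell q$ of primes with $\log\log x<\ell\mid q-1$ is $\ll x\sum_{\ell>\log\log x}\ell^{-1}\cdot\tfrac{\log\log x}{\ell}=x\log\log x\sum_{\ell>\log\log x}\ell^{-2}=o(x)$, using Brun--Titchmarsh for the inner reciprocal sum over primes $q\leq x$ with $\ell\mid q-1$. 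Conditions (i)--(ii) are exactly what is needed arithmetically: in any Z-group of order $n\in\hat{\mathcal D}$ the order $d$ of the action $T\to\Aut(G')$ and each local order $d_{p}=\mathrm{ord}\big(r\bmod p^{v_{p}(m)}\big)$ divide both $k$ and $\lambda(m)$ (the exponent of $(\mathbb{Z}/m)^{*}$); any prime factor $\ell$ of $\lambda(m)$ dividing $k$ must divide $p-1$ for some $p\mid m$ (it cannot equal such a $p$, as $\gcd(k,m)=1$), and by (ii) cannot then be large, so $d$ and all $d_{p}$ have no prime divisor exceeding $\log\log n$; being divisors of $n$ they therefore divide $m_{0}\leq(\log n)^{C}$. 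Thus $d$ and all $d_{p}$ are $(\log n)^{C}$-bounded and $(\log\log n)$-smooth, which makes every discrete logarithm below feasible.

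Concretely, on a solvable black-box $G$ with known factored order $n\in\hat{\mathcal D}$ I would: compute element orders using the factorisation of $n$; compute a generating set of $G'=[G,G]$ as the normal closure of the commutators of the given generators (a standard Las Vegas procedure, verifiable in polynomial time here via membership tests in the computed subgroup); if these generators fail to commute pairwise, reject; otherwise run the abelian-structure routine underlying Theorem~\ref{thm:iso-abelian-BB} on $G'$ (polynomial here, since the Sylow subgroups of $G'$ at primes dividing $b$ have prime order and those at small primes have order $\leq m_{0}$) to get $m=|G'|$ and, if $G'$ is cyclic, a generator $a$; if $G'$ is not cyclic, reject; put $k=n/m$ and reject if $\gcd(m,k)\neq 1$; form the lcm of the orders of the images in $G/G'$ of polynomially many random elements (these orders are read off by black-box powers, since $h\in G'$ iff $h^{m}=1$): if it is $<k$, reject ($G/G'$ is non-cyclic), and if it equals $k$, $G/G'$ is cyclic and this is witnessed, so $G$ is certified coprime meta-cyclic. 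Then obtain a cyclic complement $\langle b\rangle$ by taking a random $g$, letting $m'$ be the $m$-part of $|g|$ and setting $b=g^{m'}$ (correct with good probability, and checkable). Finally compute $r$ with $bab^{-1}=a^{r}$ prime by prime: for small $p$ the residue $r\bmod p^{v_{p}(m)}$ is found by direct search in a group of size $\leq m_{0}$; for $p\mid b$ first determine $d_{p}$ by black-box operations (it is the least divisor $i$ of the overall action order $d$---found by binary search over the prime powers dividing $k$---with $b^{i}(a^{m/p})b^{-i}=a^{m/p}$), then recover $r\bmod p$ as the unique element, among the $\leq d_{p}$ elements of the order-$d_{p}$ subgroup of $(\mathbb{Z}/p)^{*}$ (enumerated from a generator obtained by a short random search), for which $(a^{m/p})$ raised to it equals $(bab^{-1})^{m/p}$; then patch by CRT. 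Output $(m,k,r)$. Two coprime meta-cyclic groups are isomorphic iff their pairs $(m,k)$ agree and $\langle r_{1}\rangle=\langle r_{2}\rangle$, the latter checked by a discrete logarithm in the cyclic group $\langle r_{2}\rangle\leq(\mathbb{Z}/m)^{*}$ of $(\log\log n)$-smooth order dividing $(\log n)^{C}$, i.e.\ by Pohlig--Hellman (we have the generator $r_{2}$) in polynomial time.

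The hardest part will be number-theoretic rather than group-theoretic. One cannot reduce a black-box element modulo a prime, so the exponent $r$ must be recovered through discrete logarithms in $\langle a\rangle\cong C_{m}$, and $m$ may carry huge prime factors; the whole scheme is viable only because for $n\in\hat{\mathcal D}$ the relevant multiplicative orders $d$ and $d_{p}$ are simultaneously polynomially bounded and $(\log\log n)$-smooth, collapsing each such discrete logarithm to a short enumeration or to a Pohlig--Hellman computation in a small smooth cyclic group. The two genuinely non-routine parts are thus (a) isolating the correct hypothesis on $n$---the bound on the smooth part together with the chain-free condition that no large prime divide $q-1$ for another prime divisor $q$ of $n$---and (b) proving $\hat{\mathcal D}$ dense, which rests on Erd\H{o}s--P\'alfy-type estimates for sums over shifted primes and on the precise choice of the thresholds $\log\log n$ and $\log n$; everything else is careful but standard black-box bookkeeping.
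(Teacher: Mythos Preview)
Your approach is correct and takes a genuinely different route from the paper. You use the classical H\"older/Zassenhaus decomposition $G=T\ltimes G'$ for Z-groups, whereas the paper splits $G=U\ltimes K$ with $K$ the product of \emph{all} normal Sylow subgroups (their Lemma~4.9); these differ already for $G=C_5\times S_3$, where your $G'=C_3$ but their $K=C_{15}$. Correspondingly, your dense set imposes (ii) that no large prime divide $q-1$ for another large prime $q\mid n$, plus (i) a polynomial bound on the whole small part $m_0$; the paper instead requires the stronger ``$\log\log n$-separability'' (forbidding any $q^k\mid n$ with $q^k\equiv 1\bmod p$ for large $p$), which it needs to make every large-prime Sylow normal (Lemma~4.4) and hence to build $K$ from below. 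You bypass that normality entirely by working through $G'$. For the action you enumerate the order-$d_p$ subgroup of $(\mathbb{Z}/p)^\times$, which is why you need (i); the paper's \emph{deconjugation} (Theorem~4.10, a Pohlig--Hellman recursion on the primes of $|U|$) needs only that the action order is $(\log\log n)$-smooth, not that it is small, so they avoid your condition~(i). The final invariant---the image $\langle r\rangle$ of the action---and the comparison step (Lemma~4.12 in the paper) are the same in both. Your argument is more elementary and self-contained via textbook Z-group theory; the paper's is more modular, and its isolation of the normal Hall $\pi(>\log\log n)$-subgroup (Theorem~4.3) is advertised as useful beyond isomorphism testing. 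Two small points to tighten: the abelian-structure routine you invoke on $G'$ (Theorem~3.4/KP) is stated for groups of \emph{known} order, so say how you read off $m=|G'|$ first (e.g.\ prime by prime: large-prime Sylows of $G'$ have order $1$ or $p$, detectable by a single power; small-prime Sylows have order $\leq\log n$); and where you ``reject'' when random orders in $G/G'$ fail to reach $k$, you should ``abort'' instead to keep the algorithm Las Vegas.
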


The set $\hat{\mathcal{D}}$ will be defined as a subset of $\mathcal{D}$, see Lemma \ref{lem:Count-3}; for $k=3,\ldots,8$, we have that the quotient $|\hat{\mathcal{D}}\cap \{1,2,\ldots,10^k\}|/10^k$ is approximately  0.552, 0.669, 0.733, 0.719, 0.713,  and 0.721, respectively.

We note that solvability of finite black-box type groups can be tested by a Monte Carlo algorithm, see \cite{Seress}*{p.\ 38}. The mechanics of Theorems~\ref{thm:iso-abelian-BB} \& \ref{thm:iso-meta-cyclic-BB} 
depend in part on how the integers in the sets $\mathcal{D}$ and $\hat{\mathcal{D}}$ 
limit the possible group theory of groups of these orders. In particular, we need control
of large primes and large powers of a prime.  An integer $n$ is {\em  $k$-free} 
if no prime to the power $k$ divides $n$;  {\em square-free} and {\em cube-free} are synonymous 
with $2$-free and $3$-free. The orders for which Theorems~\ref{thm:iso-abelian-BB}~\&~\ref{thm:iso-meta-cyclic-BB} apply can be described as ``eventually square-free'', and are similar to orders studied by Erd\H{o}s \& P\'alfy~\cite{ErdosPalfy}.

For the proof of Theorem~\ref{thm:iso-meta-cyclic-BB} we show that a solvable group $G$ of order 
$n\in\hat{\mathcal{D}}$ decomposes as a semidirect product $G=K\ltimes_{\theta} \mathbb{Z}/b$ 
where $b$ is square-free, $\gcd(|K|,b)=1$, and all the \emph{large} prime divisors $p$ of $n$ with 
$p\geq \log\log n$ divide $b$, see Theorem~\ref{thm:mostly-split}. 
This reduces the problem to studying $K$ and finding $\theta$; in other words, our 
approach is to isolate all the large primes in the order of $G$ to a single subgroup. 
This should be useful when working with groups of black-box type for any purpose, 
not just isomorphism testing.

\begin{remark}
The input size of a black-box type group can be reduced to $(\log n)^{O(1)}$ in so-called Monte
Carlo polynomial time, see \cite{Seress}*{Lemma~2.3.4}.  Once this has been applied to inputs, 
Theorems~\ref{thm:iso-abelian-BB}~\&~\ref{thm:iso-meta-cyclic-BB}
are $(\log n)^{O(1)}$-time algorithms.  Prior complexity bounds for these problems were $O(\sqrt{n})$  and $O(n^{4+o(1)})$, respectively, see Remark \ref{remTeske}  and Section \ref{sec14}.
\end{remark}

By a theorem of H\"older (\cite{rob}*{10.1.10}), all groups of square-free order $n$ are meta-cyclic, but
Theorem~\ref{thm:iso-meta-cyclic-BB} is not guaranteed for all square-free orders $n$. 
However, by switching to a more restrictive computational model, we recently made progress for isomorphism
testing of cube-free groups.  Specifically, considering groups generated by a set $S$ 
of permutations on a finite set $\Omega$  gives  access to a robust family of 
algorithms by Sims and many others (see \citelist{\cite{handbook}\cite{Seress}})
that run in time polynomial in $|\Omega|\cdot |S|$. Note that the order of such a group 
$G$ can be exponential in $|\Omega|\cdot |S|$, even when restricted to groups of square-free order, see \cite{PART2}*{Proposition 2.1}.  We proved the following in \cite{PART2}.

\begin{theorem}[\cite{PART2}]
There is a polynomial-time algorithm that given groups $G$ and $H$ of permutations on
finitely many points, decides whether they are of cube-free order, and if so, decides that $G\not\cong H$ or constructs an isomorphism $G\to H$.
\end{theorem}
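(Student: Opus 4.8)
The plan is to run the standard polynomial-time library for permutation groups --- Schreier--Sims, composition and chief series, the solvable radical, identification of simple groups, Kantor's Sylow algorithm, and Holt's cohomology and complement--conjugacy routines --- on top of the structure theory of cube-free groups, so that isomorphism reduces to comparing a bounded amount of data. \emph{Step 1: orders and cube-freeness.} Compute $|G|$ and $|H|$ by Schreier--Sims, compute a composition series of each group, and identify every composition factor: each is either cyclic of some prime order $p$ --- which reveals $p$ --- or a non-abelian simple group whose order is then read off from the classification of finite simple groups. Multiplying the orders of the factors yields the full prime factorisation $n=\prod_i p_i^{a_i}$ of $|G|$, so cube-freeness is decided outright by checking $a_i\leq 2$ for all $i$, and likewise for $|H|$. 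If either order fails to be cube-free, report this; otherwise report ``cube-free'', and if $|G|\neq|H|$ return $G\not\cong H$. Henceforth $|G|=|H|=n$ is cube-free; note for later that every prime dividing $n$ is at most $|\Omega|$, since every prime dividing $|\Omega|!$ is.

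\emph{Step 2: structural reduction.} Cube-freeness forces every Sylow subgroup to have order $p$ or $p^2$, hence every abelian chief factor to have rank at most $2$. Compute the solvable radical $R=\mathrm{Rad}(G)$. The quotient $G/R$ has trivial solvable radical and cube-free order, so by the classification of (almost) simple groups of cube-free order it lies in an explicitly bounded family --- essentially products of groups $\mathrm{PSL}_2(p)$ with Sylow $2$-subgroup of order at most $4$, the outer-automorphism and factor-permuting parts being severely restricted by cube-freeness --- and one identifies $G/R$ and $H/\mathrm{Rad}(H)$ constructively and compares them, returning $G\not\cong H$ if they differ. Inside the solvable group $R$ one computes a characteristic chief series all of whose factors are elementary abelian of rank $\leq 2$ over primes $\leq|\Omega|$, together with the conjugation action of $G$ on each factor (which factors through a subgroup of $(\mathbb Z/p)^{\times}$, $(\mathbb Z/p^2)^{\times}$, or $\GL_2(p)$) and the extension class gluing each factor to the group above it. Up to these gluing data, $G$ is thereby presented as a short tower of small modules carrying a $G$-action.

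\emph{Step 3: local comparison, and the main obstacle.} An isomorphism $G\to H$ is precisely a system of isomorphisms between corresponding chief factors that is simultaneously compatible with every conjugation action and with every gluing class in the relevant first and second cohomology groups. Since every module in sight has dimension at most $2$ over a field $\mathbb F_p$ with $p\leq|\Omega|$, all the needed objects --- the $G$-action on each chief factor, the orbits of automorphisms of a factor, the cohomology groups, and the complement/conjugacy data --- have size polynomial in $|\Omega|$ and are computed in time polynomial in $|\Omega|\cdot|S|$ by the cited algorithms, while matching abelian pieces reduces to computing invariant factors. The algorithm therefore amounts to checking that finitely many invariants agree between $G$ and $H$ --- the primes with their exponents, the automorphism-equivalence type of the conjugation action at each prime, and the finitely many cohomology invariants --- and, if so, to assembling a witnessing isomorphism layer by layer. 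I expect the main obstacle to be organising this last step so that the search does not blow up: a priori, matching the conjugation action of $G$ on $R$ against that of $H$ on $\mathrm{Rad}(H)$ is a choice among exponentially many possibilities, and the resolution is that cube-freeness makes the classification of cube-free groups parametrisable precisely by the bounded data above --- the quantitative core of the cube-free classification, in the spirit of H\"older and Dietrich--Eick --- so that once the structural data are extracted the comparison is of bounded size. The remaining difficulty is purely algorithmic: extracting those data constructively inside a permutation group, that is, realising each chief factor, its conjugation action, and its cohomology class by explicit generators, which is exactly what Schreier--Sims, the composition/chief-series and solvable-radical algorithms, Kantor's Sylow algorithm, and Holt's cohomology machinery deliver in polynomial time.
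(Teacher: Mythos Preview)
This theorem is not proved in the present paper at all: it is quoted verbatim from the companion paper \cite{PART2} and functions here purely as a citation. Consequently there is no ``paper's own proof'' to compare your proposal against; any assessment of your sketch has to be made on its own merits rather than by matching it to something in this manuscript.

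Taken on its own, your outline is a reasonable high-level plan and does point at the right toolbox (Schreier--Sims, chief series, solvable radical, Kantor's Sylow machinery, cohomology for complements), and your observation that cube-freeness caps all chief-factor ranks at~$2$ and all relevant primes at~$|\Omega|$ is exactly what makes local computations polynomial. However, the proposal is not yet a proof. The substantive content lives entirely in what you label the ``main obstacle'' in Step~3, and you do not resolve it: you assert that the cube-free classification renders the global matching problem ``of bounded size'', but that is precisely the theorem, not an argument for it. Concretely, even with rank-$\leq 2$ chief factors, an isomorphism $G\to H$ must be found up to the action of $\Aut(H)$ on the tower of extension data, and one needs either an explicit parametrisation of cube-free groups by polynomially many invariants (which is the actual work of \cite{cf} and \cite{PART2}) or a mechanism that propagates partial isomorphisms up the chief series while controlling the branching by the automorphism groups of the layers. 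Your sketch names these ingredients but does not supply them. Also, your treatment of $G/R$ is too quick: cube-freeness does force the non-abelian composition factors to be among specific $\mathrm{PSL}_2(p)$, but pinning down $G/R$ up to isomorphism and then matching the action of $G/R$ on $R$ still requires an argument, not just an appeal to ``an explicitly bounded family''. In short: the strategy is plausible and in the spirit of \cite{PART2}, but as written it is a plan with its hardest step deferred rather than a proof.
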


Completing our tour, we consider groups input by their multiplication table ({\em Cayley table}).  
Such a verbose input for groups allows us to compare the complexity of group isomorphism 
with other algebraic structures -- such as semigroups -- which in general cannot be input 
by anything smaller than a multiplication table.  The complexity of isomorphism testing 
for groups of order $n$ given by Cayley tables is $n^{(\log n)/4 + O(1)}$, see \cite{Grochow},
and a recent break-through by Babai \cite{Babai:quasi} shows that isomorphism testing of 
semigroups (equivalently graph isomorphism) has complexity
$n^{O((\log n)^d))}$ for some $d\leq 3$.
Using a theorem of Guralnick \cite{guralnick}*{Theorem A} that depends on the Classification of Finite Simple Groups (CFSG), we prove the following in Section \ref{sec14}.

\begin{theorem}[(CFSG)]\label{thm:iso-almostall-Cayley}
For every $\varepsilon>0$ there a set $\tilde{\mathcal{D}}\subset\mathbb{N}$ of density $1-\varepsilon$
such that isomorphism of groups of order $n\in \tilde{\mathcal{D}}$ 
input by Cayley tables can be decided in deterministic polynomial time.
\end{theorem}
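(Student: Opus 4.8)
The plan is to produce, for each $\varepsilon>0$, a density-$(1-\varepsilon)$ set of orders so ``eventually square-free'' that every group of such order has a uniformly bounded number of generators, and then to exploit that isomorphism of $d$-generated groups given by Cayley tables is testable in time polynomial in the order once $d$ is fixed. Fix a parameter $B\geq2$ (to be chosen from $\varepsilon$ at the end) and let
\[
\tilde{\mathcal D}=\tilde{\mathcal D}_B=\bigl\{\,n\in\mathbb N : p^{\,v_p(n)}\leq B \text{ for every prime }p\text{ with }v_p(n)\geq2\,\bigr\},
\]
equivalently $n\in\tilde{\mathcal D}_B$ iff $n=mb$ with $b$ square-free, $\gcd(m,b)=1$, and every prime power dividing $m$ at most $B$. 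The complement of $\tilde{\mathcal D}_B$ is contained in $\bigcup_{p>\sqrt B}\{n:p^2\mid n\}\cup\bigcup_{p\leq\sqrt B}\{n:p^{e(p)}\mid n\}$, where $e(p)$ is least with $p^{e(p)}>B$; the first union has density at most $\sum_{p>\sqrt B}p^{-2}=O(B^{-1/2})$, and the second at most $\sqrt B\cdot B^{-1}$ because each $p^{-e(p)}<1/B$. So the density of $\tilde{\mathcal D}_B$ tends to $1$ as $B\to\infty$, and I would fix $B=B(\varepsilon)$ with this density at least $1-\varepsilon$.

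Next I would bound the number of generators of a group $G$ of order $n\in\tilde{\mathcal D}_B$. A Sylow $p$-subgroup $P$ of $G$ is cyclic (hence $1$-generated) when $v_p(n)=1$, and has order $p^{v_p(n)}\leq B$ when $v_p(n)\geq2$, in which case it is $\lfloor\log_2|P|\rfloor\leq\lfloor\log_2 B\rfloor$-generated because any group of order $N$ has a generating set of size at most $\lfloor\log_2 N\rfloor$. Since $B\geq2$, every Sylow subgroup of $G$ is thus $d_0$-generated with $d_0:=\lfloor\log_2 B\rfloor\geq1$, so by Guralnick's Theorem~A \cite{guralnick}*{Theorem~A}, whose proof depends on the classification of finite simple groups, $G$ is $d$-generated with $d:=d_0+1$ --- a constant depending only on $\varepsilon$.

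I would then run the following algorithm on Cayley tables of groups $G,H$ of order $n\in\tilde{\mathcal D}_B$. Scan the at most $n^d$ tuples in $G^d$ for one, say $(g_1,\dots,g_d)$, generating $G$ (one exists by the previous step); testing generation and, along the way, fixing a breadth-first spanning tree of the Cayley graph of $G$ on $\{g_1^{\pm1},\dots,g_d^{\pm1}\}$, which records for every $x\in G$ a word $w_x$ of length below $n$ with $w_x(g_1,\dots,g_d)=x$, costs $\mathrm{poly}(n)$. Now for each tuple $(h_1,\dots,h_d)\in H^d$ define a function $\phi\colon G\to H$ by $\phi(x):=w_x(h_1,\dots,h_d)$, computed along the tree using the multiplication table of $H$, and test whether $\phi$ is a bijection satisfying $\phi(xy)=\phi(x)\phi(y)$ for all $x,y\in G$; output that $G\cong H$, with witness $\phi$, if this succeeds for some tuple, and that $G\not\cong H$ otherwise. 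Correctness: any isomorphism $\psi\colon G\to H$ sends $(g_1,\dots,g_d)$ to a generating tuple of $H$, and for that tuple $\phi(x)=w_x(\psi g_1,\dots,\psi g_d)=\psi\bigl(w_x(g_1,\dots,g_d)\bigr)=\psi(x)$, so $\phi=\psi$ passes both tests; conversely any $\phi$ passing both tests is an isomorphism. The total cost is $n^d\cdot\mathrm{poly}(n)$, polynomial in $n$ because $d=d(\varepsilon)$ is constant, hence polynomial in the size $\Theta(n^2\log n)$ of a Cayley table.

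The main obstacle is the tension in the first two steps: the orders in $\tilde{\mathcal D}$ must be numerous (density $1-\varepsilon$) yet tame enough to force a uniform generator bound, and it is exactly the resulting constant $d(\varepsilon)$ in the exponent of $n$ that both blocks a single density-$1$ statement along these lines and makes the CFSG-based input unavoidable, Guralnick's theorem being what upgrades ``bounded Sylow rank'' to ``bounded rank''. Once that bound is in hand, the algorithmic step is the routine collapse of the generic $n^{\Theta(\log n)}$ enumeration over $(\log n)$-element generating tuples to an enumeration over $d$-tuples of polynomial size.
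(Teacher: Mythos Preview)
Your proof is correct and follows essentially the same strategy as the paper: bound the minimal number of generators uniformly via Guralnick's theorem and then run the brute-force generator-tuple enumeration, which becomes polynomial once the exponent is constant. The only difference is that the paper takes $\tilde{\mathcal D}$ to be the set of $k$-free integers (invoking the classical density $1/\zeta(k)\to1$) rather than your bounded-prime-power set, which makes the density step a one-liner but is otherwise equivalent.
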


Throughout this paper we mostly adhere to protocol set out in standard literature on computational group theory, such as the Handbook of Computation Group Theory \cite{handbook} and the books of Robinson \cite{rob} and Seress \cite{Seress}. Section~\ref{sec:prelim} provides further details on our computational assumptions; in particular, in Section~\ref{sec:prelim} we introduce the background necessary for formulating our results in the language of Type Theory. A further justification of the use of Type Theory is given in Section \ref{sec:Turing}.

\subsection{Limitations}   
While we provide isomorphism tests for groups of cube-free orders (see \cite{PART2}), and for abelian and 
meta-cyclic groups of almost all  orders, it is known that most isomorphism types of groups 
accumulate at orders with large prime-power divisors.  Indeed,  Higman, Sims, and Pyber 
\cite{BNV:enum} proved that the number of groups of order $n$, up to isomorphism, tends to 
$n^{2\mu(n)^2/27+O(\log n)}$ where $\mu(n)=\max\{k : n \text{ is not $k$-free}\}$. 
Specifically, the number of pairwise non-isomorphic groups of a cube-free order $n$ is 
not more than $O(n^8)$, with speculation that the tight bound is $o(n^2)$, see \cite{BNV:enum}*{p.~236}. 
The prevailing belief in works like \citelist{\cite{BCGQ}\cite{Wilson:profile}}
is that the difficult instances of group isomorphism
are when $\mu(n)$ is unbounded, especially when $n$ is a prime power. 
Isomorphism testing of finite $p$-groups is indeed a research area that has attracted a lot of attention.

Avoiding the problems discovered by Iliopoulos \cite{Iliopoulos} comes at a price. 
According to \cite{Adleman}, we know of no methods to test if an integer $n$ is square-free or $k$-free
without factoring $n$.  Moreover,  some groups have unknown order.  So we cannot
apply some of our algorithms in those cases. We also stress that Theorems~\ref{thm:iso-abelian-BB}~\&~\ref{thm:iso-meta-cyclic-BB} report
existence of isomorphisms only:  this is because we introduce a third group $G_0$ 
with favorable computational properties, and construct
isomorphisms $G_0\to G_1$ and $G_0\to G_2$ {\em without their inverses} -- which would require solutions to discrete logarithm
and integer factorization problems.  Thus, 
$G_1\cong G_2$ is inferred with no explicit isomorphism. Even so, having a preferred
copy $G_0$ of a group can be helpful, and, in practice, sometimes this can be used to construct an isomorphism $G_1\to G_2$.

\section{Preliminaries}\label{sec:prelim}
\subsection{Notation.} We reserve $p$ for prime numbers and $n$ for group orders.  
For a positive integer $n$ we denote by $C_n$ a
cyclic group of order $n$, and $\mathbb{Z}/n$ for the explicit encoding as 
integers, in which we are further permitted to treat the structure as a
ring.  Let $(\mathbb{Z}/n)^{\times}$ denote the units of this ring.
Direct products of groups are denoted variously by
``$\times$'' or exponents. Throughout, $\mathbb{F}_{q}$ is a field of order $q$ 
and ${\rm GL}_d(q)$ is the group of invertible $(d\times d)$-
matrices over $\mathbb{F}_{q}$. 

For a group $G$ and  $g,h\in G$, conjugates and commutators are $g^h=h^{-1}g h$ and 
$[g,h]=g^{-1} g^h$, respectively. For subsets $X,Y\subset G$ let
$[X,Y]=\langle [x,y]:x\in X, y\in Y\rangle$; the centralizer and normalizer of $X$ in $G$ are  $C_G(X)=\{g\in G: [X,g]=1\}$ and $N_G(X)=\{g\in G: [X,g]\subseteq X\}$, respectively.
The derived series of $G$ has terms  $G^{(n+1)}=[G^{(n)},G^{(n)}]$ for 
$n\geq 1$, with $G^{(1)}=G$. If $G$ is abelian and $m>0$ is an integer, then
$G^{[m]}$ is the subgroup of $G$ generated by all $m$-th powers of elements in~$G$. We read group extensions from the right and use 
$A\ltimes B$ for split extensions of $B$ by $A$; we also write 
$A\ltimes_\varphi B$ to emphasize the action $\varphi\colon A\to \Aut(B)$. 
Hence, $A\ltimes B\ltimes C\ltimes D$ stands for 
$((A\ltimes B)\ltimes C)\ltimes D$, etc.


\subsection{Computation requirements}\label{sec:general-inputs}
 
The models for computations we use here are far ranging, so instead of 
discussing complexity with Turing Machines (universal computers) we 
use Type Theory (universal programming languages).  Recently these were shown
to be equivalent \cite{BetaReduction}, but the Type Theory approach is expressive,
reflects current programming, and, importantly, it avoids certain complication
with black-box groups specific to problems that are in general not known to
be in NP, such as group isomorphism; we comment on these in more detail in  Section~\ref{sec:Turing}. For a good introduction to Type Theory we refer to  Grayson \cite{grayson} and Farmer \cite{farmer}.

The following paragraphs briefly describe the concepts of Type Theory we apply in our work. For details of our various computational preliminaries we refer to 
standard books on computational group theory \citelist{\cite{handbook}\cite{Seress}} and to \cite{HoTT}*{Chapter~1}.

\medskip

{\bf Types.}
Since we compute with groups that are too large to be listed we shall {\em not}
consider the set on which a group $G$ is defined as part of the input. Type 
Theories (derivatives of Church's $\lambda$-calculus) likewise avoid sets as 
their foundation and use instead {\em types} \code{A} and their {\em terms 
(inhabitants)} \[x : \code{A}, \quad\text{saying ``$x$ of type \code{A}''}.\]Whenever 
the terms $x : \code{A}$ form a set, that set will be denoted 
$|\code{A}|=\{x : \code{A}\}$; see \cite{HoTT}*{Section~3.1}.  For
example, a type $\code{Boolean}:\equiv\{0,1\}$ has $|\code{Boolean}|=\{0,1\}$.
New types are built from old types using sums $+$ (Type Theory's version of
``\code{or}'' and  disjoint union), products $\times$ (``\code{and}'' and intersections), 
functions, $\sum_{x:\code{A}}P(x)$ (``exists''), and $\prod_{x:\code{A}}P(x)$ (``for-all''); see
\cite{HoTT}*{Table~1}.  

Types can have infinitely many terms, for example, a type \code{Int} for integers
has $|\code{Int}|=\mathbb{Z}$; cf.\ \cite{HoTT}*{Sections~1.8-1.9}. 
{\em Algebraic types} (or {\em composition types}) are schema to build types
using generic parameter types \code{A}, for example, \code{List[A]} for lists of terms
of type \code{A}, or \code{Mat[K]} for matrices of inhabitants of~\code{K}.

\medskip

{\bf Function types.} As an algorithm proceeds, it converts terms of one type 
into terms of possibly other types.  For that we need function types. As sets
are not available, a function $f:\code{A}\to\code{B}$ here means a term $f$ of 
type $\code{A}\to\code{B}$ which expresses how to transform a term $x:\code{A}$ 
into a term $y:\code{B}$ by using previously defined terms and types.  For example, 
a function squaring integers can be written as
\[\code{def }f(x:\code{Int}):\code{Int}:\equiv x^2\]
which is a modern variant of $\lambda$-calculus notation $\lambda (x:\code{A})(\cdots):
\code{A}\to\code{B}$, cf.\ \cite{HoTT}*{Section~1.2}.  Iterating with the
function type constructor, we produce terms  
$f:\code{A}\to\code{B}\to\code{C}$ that interpret 
set-functions $|\code{A}|\to (|\code{B}|\to |\code{C}|)$, or, equivalently,
$|\code{A}|\times |\code{B}|\to |\code{C}|$.

\medskip

{\bf Partial Functions.} To model functions that are not defined for every
$x:\code{A}$ we consider the type \[\code{B}^?:\equiv \code{B}+\code{Nothing}.\]
We use this to define $f:\code{A}\dashrightarrow\code{B}:\equiv\code{A}^?\to\code{B}^?$
and call the {\em domain} those $x:\code{A}$ bound to $f(x):\code{B}$. 
For example, the following represents multiplicative inverses in $\mathbb{Z}/5$:
\[\code{def }f(x:\code{Int}^?):\code{Int}^? 
	:\equiv \text{ if } x:\code{Int}\text{ and }x\neq 0\text{ then }(x^{-1}\bmod{5}); 
	\text{ else }\code{Nothing}.\]
So instead of restricting the inputs to exclude $0$, we simply output a result
of \code{Nothing}. Note that  \code{Nothing} as input is also 
mapped to \code{Nothing}, which allows partial functions to be composed.

\medskip

{\bf Group Types.}
A type for groups is made by combining types for the operations 
$\cdot,{}^{-1},1$, equality $\equiv$, and generators $S$; as well as
certificates \code{asc}, \code{inv}, \code{id}, \code{ref}, \code{sym}, \code{tra}, and \code{cng} of the required
axioms; we refer to \cite{HoTT}*{p.\ 61} for a similar definition in full $\Sigma$,
$\Pi$ notation:

{\small
\begin{align*}
&\code{Group[A]}  :\equiv \left\{\begin{array}{rl|rl}
\cdot & : \code{A}\dashrightarrow\code{A}\dashrightarrow\code{A} 
	& \code{asc} & :\prod_{x,y,z:\code{A}} (x\cdot y)\cdot z \equiv x\cdot (y\cdot z)\\
{}^{-1}& :\code{A}\dashrightarrow\code{A}
	& \code{inv}&: \prod_{x:\code{A}} x^{-1}\cdot x\equiv 1\\
1& : \code{A}
	& \code{id}& : \prod_{x:\code{A}} 1\cdot x\equiv x\\
S&: \code{List[A]}
	& \code{ref} & : \prod_{x:\code{A}}(x\equiv x)\\
\equiv & : \code{A}\dashrightarrow\code{A}\dashrightarrow\code{Boolean}
	& \code{sym} & : {\prod_{x,y:\code{A}}[(x\equiv y)\Rightarrow (y\equiv x)]}\\
&	& \code{tra} & : {\prod_{x,y,z:\code{A}}[(x\equiv y)\times(y\equiv z)\Rightarrow
	x\equiv z]}\\
&	& \code{cng} &: {\prod_{x,y,z,w:A} [(x\equiv y)\times (z\equiv w)\Rightarrow x\cdot z\equiv y\cdot w]}\\
  \end{array}\right.
\end{align*}
}

A type for homomorphisms between terms of \code{Group[A]} and \code{Group[B]} is given by
\[\code{Hom}_{\code{Group}}\code{[A,B]}  :\equiv f :\code{A}\dashrightarrow\code{B} \quad\text{with axiom }\quad  \code{hom} : \prod\nolimits_{x,y:\code{A}} (x\cdot y)f\equiv (x)f\cdot (y)f.\]
 
Intuitively, elements of a group are considered as equivalence classes of words in the generators.
For $G:\code{Group[A]}$ with generators $S$, a {\em straight-line program} is a recursively defined function 
$\sigma:\code{List[A]}\dashrightarrow \code{A}$ using only the functions $\cdot$, ${}^{-1}$, and~$1$; 
cf.\ \cite{Seress}*{p.\ 10}.   If \code{SLP[A]} denotes the type for straight-line
programs, then elements of a group $G$ are interpreted as $\equiv$-equivalence classes of
terms $x:\code{A}$ where $x\equiv S\sigma$ for some straight-line program $\sigma$; that is,
\[
	G= \left\{ x:\code{A} \;\;\middle|\;\; \sum\nolimits_{\sigma:\code{SLP[A]}} x\equiv S\sigma \middle\}\middle/\equiv\right..
\]
Writing $x:\code{A}$ is introducing a term and its type, whereas $x\in G$ asserts a type 
\emph{and} the property (to be assumed or proved) that $x$ is an SLP in the generators of $G$.

Sums and products skip terms of type \code{Nothing}; thus, the axioms need only hold for elements in the group.
Note that \code{SLP[-]} is a type-functor in that, given a 
partial function $f:\code{A}\dashrightarrow \code{B}$, there is an induced map 
$\code{SLP}(f):\code{SLP[A]}\dashrightarrow \code{SLP[B]}$ that replaces the terms of \code{A} with
the corresponding terms in \code{B} assigned by the map $f$; likewise the evaluation-functor 
converts $\code{SLP[A]}\dashrightarrow \code{SLP[B]}$ to $\code{A}\dashrightarrow\code{B}$; these are interchangeable models for \code{Hom}.

Setting $\code{Perm[X]}:\equiv \{ \Delta:\code{List[X]},\;
\cdot:\code{X}\dashrightarrow \code{X}\}$
we can create groups $G:\code{Group[Perm[X]]}$ of {\em permutation type}, 
permuting a list of generic type \code{X}.  The operations for $G$ can be
assigned for all groups of type \code{Perm[X]} rather than individually
for each term, leaving the work of the user to provide generators $S$ and
a congruence $\equiv$.
We can similarly create groups of \emph{matrix type}
$G:\code{Group[Mat[K]]}$; we forgo discussing how to make a type for matrices, fields,
etc.  Groups of \emph{presentation type}    
$G:\code{Group[SLP[Character]]}$  have point-wise 
operations on SLPs; here \code{Character} is a type for a fixed finite alphabet with at least two 
inhabitants.  The group $\mathbb{Z}/d_1\times\cdots\times \mathbb{Z}/d_s$
has type \code{Group[List[Int]]}, and so on.   By saying a group $G$ inhabits a
type with a generic parameter, such as \code{A} above, we mean that we are only
considering functions on $G$ that can be applied to arbitrary substitutions of
\code{A}.  Hence, inhabitants of \code{Group[A]} are
\emph{groups of black-box type} since functions on these types can only apply 
abstract group theory.  

\medskip

{\bf Axioms.}
The inclusion of axioms within our definition of a group type
achieves two aims.  First, it guarantees that our theorems can assume all 
inputs are groups and appropriate functions are homomorphisms, rather than 
simply algebraic objects with  correct signatures.  This is a vital 
difference for our  model that allows us to prove stronger
results than a general black-box group algorithm; again, see Section \ref{sec:Turing}.  Second, 
including axioms is necessary when inputting groups in proof-checkers, which are used to verify complex theory such 
as parts of the Classification of Finite Simple Groups \cite{OddOrder}.
Our model promotes: \emph{Trust, but verify}. 

As a practical matter, for most group types the required information for 
operations and axioms is static and does not need to be provided by the user, 
but is instead part of a computer algebra system such as \textsf{GAP} \cite{gap}.  These terms
are passed along when we create subgroups (replacing generators $S$) and 
quotients (replacing the congruence $\equiv$).  Note that congruences in Type 
Theory would normally be handled with a proposition type, not a Boolean, and thus 
``implication'' can be replaced by functions on propositions.  This is essential
for proof-checkers, but it simplifies our treatment to think of Boolean valued
congruences.   

\medskip

{\bf Timing \& Complexity.}
Inputs are terms of type \code{List[Character]} or 
\code{List[Boolean]}, for example, a text or binary file in a computer.  This 
permits a well-defined notion of input size: the number $\ell$ of terms in the 
list.  An algorithm is a series of functions applied to an input term, and its  timing 
$T(\ell)$ is the number of evaluations as a function of the input size $\ell$. 
An algorithm is in polynomial time if $T(\ell)\in O(\ell^c)$ for a constant
$c$.  In the usual way, we obtain a partial ordering amongst problems, for
example, $A\leq_P B$ says that  whenever problem $B$ can be solved in 
polynomial time, then so can problem $A$. 
Terms depending on generic types, such as 
$G:\code{Group[A]}$, will have varied complexity depending on the properties of 
$\code{A}$; in that case the complexity we prescribe is a function in the number of 
terms of type $\code{A}$ (the {\em arithmetic model}). 

It follows from Pyber's Theorem \cite{BNV:enum}*{p.\ 2} that the number of isomorphism types 
of groups of order $n$ tends to $n^{O(\log^2 n)}$. 
This shows that the minimum possible input length (\emph{Kolmogorov complexity}) 
for a general group of order $n$ with input size $\ell$ is polynomial in $\log n$; 
in particular, timings of the form $O((\ell\log n)^c)$ are always polynomial in the input
size. We often report timings in terms only of the number of factors  $\log n$.

One has to be cautious when estimating running times for function
evaluations.  Take for example $\code{def } f(x:\code{List[A]}):\code{List[A]}:\equiv x+x$, 
the concatenation of a string (list over $\code{A}$) to itself.  It may seem that in $\ell$ recursive 
applications the length of the output is exponentially longer.  In reality, 
either the process of concatenation has to copy every term of $x$ to produce 
the doubling -- which would make the time to recursively evaluate $f$ grow 
exponentially; or, concatenation reuses the value of $x$ (say by two pointers to 
the same string) and thus the final length is $O(\ell)+\code{length}(x)$, not
$2^{\ell}\cdot\code{length}(x)$. A detailed accounting for timing of shared terms is given in 
\cite{BetaReduction}, along with a proof that polynomial time in the Type Theory 
sense agrees with polynomial time in the Turing Machine sense.

\section{Isomorphism testing of black-box abelian groups of most orders}\label{secBBAb}
Our approach for proving  Theorem~\ref{thm:iso-abelian-BB} is to search through 
the primes $p$ less than a fixed bound $c$, and to strip off the $p$-torsion 
subgroups, leaving behind a direct factor containing the Sylow $q$-subgroup of
the group for every prime $q>c$.  Here we rely on the applicable number theory
to observe that what remains is almost always square-free, and consequently 
uniquely characterized by the order;  as a result, isomorphism can be decided. 
The work is to acquire the torsion subgroups for \emph{small} primes $p<c$ 
without involving difficult problems such as integer factorization or
discrete logarithms.  Critical ingredients in this are the progressively 
stronger results on the computability of abelian groups of black-box type, most
recently the work of Karagiorgos \& Poulakis \cite{KP}.

\subsection{Bases and extended discrete logarithms}\label{sec:2wayisom}
Our effort to solve the isomorphism problem for abelian groups relies on a number
of related problems, the first of which are one-way and two-way recognition questions, stated  as follows.

\mydes{1-AbelRecog}
{a finite abelian $G:\code{Group[A]}$;}
{$d_1,\ldots,d_s:\code{Int}$ with $d_1|\cdots |d_s$ and a map 
$\alpha\colon\code{Hom$_{\code{Group}}$[List[Int],A]}$ describing
an isomorphism $\alpha\colon \mathbb{Z}/{d_1}\times\cdots \times\mathbb{Z}/d_s\to G$ 
with polynomial-time evaluation.}

To expose the nuance in {\sf 1-AbelRecog} consider the following stronger
goal.

\mydes{2-AbelRecog}
{a finite abelian $G:\code{Group[A]}$;}
{$d_1,\ldots,d_s:\code{Int}$ with $d_1|\cdots |d_s$, a map 
$\alpha:\code{Hom}_{\code{Group}}\code{[List[Int],A]}$ describing and isomorphism
$\alpha:\mathbb{Z}/{d_1}\times\cdots \times\mathbb{Z}/d_s\to G$ with polynomial-time evaluation, and its inverse
$\alpha^{-1}:\code{Hom}_{\code{Group}}\code{[A,List[Int]]}$ with polynomial-time evaluation.}

One-way isomorphisms are computable only in the direction of
the arrow, inverse images could be hard.  Meanwhile, two-way isomorphisms 
can be used efficiently in both directions.

The classification of finitely-generated abelian groups \cite{handbook}*{Theorem 9.12} yields the following.

\begin{proposition}
Isomorphism testing of finite abelian groups is polynomial-time reducible to problem
{\sf 1-AbelRecog}.
\end{proposition}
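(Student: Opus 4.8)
The plan is to reduce isomorphism testing to two calls to {\sf 1-AbelRecog} followed by a trivial list comparison. Given finite abelian groups $G_1,G_2:\code{Group[A]}$, apply {\sf 1-AbelRecog} to each to obtain divisor-ordered chains $d_1^{(1)}\mid\cdots\mid d_{s_1}^{(1)}$ and $d_1^{(2)}\mid\cdots\mid d_{s_2}^{(2)}$ (discarding the accompanying one-way isomorphisms $\alpha_1,\alpha_2$, and deleting any entries equal to $1$ so that the chains are reduced). Return ``$G_1\cong G_2$'' if and only if the two reduced chains are equal as lists.

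For correctness, recall the uniqueness part of the classification of finitely generated abelian groups \cite{handbook}*{Theorem~9.12}: a finite abelian group $G$ is isomorphic to $\mathbb{Z}/d_1\times\cdots\times\mathbb{Z}/d_s$ for a \emph{unique} chain $d_1\mid\cdots\mid d_s$ of integers larger than $1$ (the invariant factors of $G$), and two such groups are isomorphic if and only if their invariant-factor chains coincide. By the specification of {\sf 1-AbelRecog}, the returned chain $d_1\mid\cdots\mid d_s$ comes with an isomorphism $\mathbb{Z}/d_1\times\cdots\times\mathbb{Z}/d_s\to G$; since $\mathbb{Z}/1$ is trivial, deleting the entries equal to $1$ leaves precisely the invariant-factor chain of $G$. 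Hence the comparison above decides $G_1\cong G_2$; in particular unequal orders are handled automatically, as $|G_i|=\prod_j d_j^{(i)}$.

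For the complexity, the reduction makes two oracle calls and then compares two integer lists whose entries multiply to $|G_1|$ and $|G_2|$ respectively, hence of total bit-length $O(\log|G_1|+\log|G_2|)$; the comparison, and hence the whole reduction apart from the oracle calls, is polynomial in the input size. This establishes the claimed reduction.

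There is no real obstacle in the reduction itself; all of the difficulty is deferred to solving {\sf 1-AbelRecog}. It is worth stressing what is \emph{not} needed: the inverse maps $\alpha_i^{-1}$ of problem {\sf 2-AbelRecog} play no role, since deciding isomorphism uses only the structural invariant $d_1\mid\cdots\mid d_s$ and never requires computing (possibly discrete-logarithm-hard) preimages in $G_i$. The one point requiring attention is that the output of {\sf 1-AbelRecog} becomes canonical once the imposed divisibility ordering and the removal of trivial factors are taken into account, which is exactly what makes a syntactic comparison of the returned data sufficient.
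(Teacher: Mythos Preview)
Your proof is correct and follows the same approach as the paper, which simply cites the classification of finitely generated abelian groups and leaves the details implicit. You have filled in those details carefully, including the point about stripping trivial factors to make the returned chain canonical before comparison.
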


We consider how these problems may be solved; we use the following terminology 
for abelian groups $G$. An ordered \emph{basis} $x_1,\dots,x_s\in G$ is defined
by satisfying  $G=\langle x_1\rangle\times \cdots \times \langle x_s\rangle$; 
it is {\em canonical} if each order $|x_i|$ divides $|x_{i+1}|$.  This leads 
to the following computational task:

\mydes{CanonicalBasis}
{a finite abelian $G:\code{Group[A]}$;}
{a canonical basis $x_1,\ldots,x_s:\code{A}$ for $G$ together with the orders
 $|x_1|,\ldots,|x_s|$.}

\begin{proposition}
Problem {\sf 1-AbelRecog} is polynomial-time equivalent to 
{\sf CanonicalBasis}.
\end{proposition}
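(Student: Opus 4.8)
The plan is to prove the equivalence by giving two polynomial-time reductions, one in each direction, each of which only repackages the output of one problem into the shape demanded by the other; no new computation inside $G$ is needed beyond evaluating a map that is already supplied.

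To see {\sf CanonicalBasis} $\leq_P$ {\sf 1-AbelRecog}, assume {\sf 1-AbelRecog} runs in polynomial time and apply it to $G$, obtaining integers $d_1\mid\cdots\mid d_s$ and an isomorphism $\alpha\colon\mathbb{Z}/d_1\times\cdots\times\mathbb{Z}/d_s\to G$ with polynomial-time evaluation. Let $e_i$ be the tuple with $1$ in coordinate $i$ and $0$ elsewhere and put $x_i:=(e_i)\alpha$. Since $\alpha$ is an isomorphism and $e_i$ has order exactly $d_i$ in the product, $|x_i|=d_i$; since $\langle e_1\rangle\times\cdots\times\langle e_s\rangle$ is the whole product, an isomorphism carries internal direct product to internal direct product, so $G=\langle x_1\rangle\times\cdots\times\langle x_s\rangle$. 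Together with $d_1\mid\cdots\mid d_s$ this makes $x_1,\ldots,x_s$ a canonical basis, and its orders $d_1,\ldots,d_s$ are already at hand; producing the output costs $s$ evaluations of $\alpha$, hence polynomial time.

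Conversely, for {\sf 1-AbelRecog} $\leq_P$ {\sf CanonicalBasis}, assume {\sf CanonicalBasis} runs in polynomial time and apply it to $G$, obtaining a canonical basis $x_1,\ldots,x_s:\code{A}$ with orders $d_i:=|x_i|$, so $d_1\mid\cdots\mid d_s$. Define
\[
\alpha\colon \mathbb{Z}/d_1\times\cdots\times\mathbb{Z}/d_s\longrightarrow G,\qquad
(a_1,\ldots,a_s)\longmapsto x_1^{a_1}\cdots x_s^{a_s};
\]
formally, $\alpha$ is the term sending a tuple to the straight-line program that forms this product, so it inhabits the homomorphism type required by {\sf 1-AbelRecog}. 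It is a homomorphism because $G$ is abelian, well defined because $x_i^{d_i}=1$, surjective because the $x_i$ generate $G$, and injective because the basis is direct with $|x_i|=d_i$; thus $\alpha$ is an isomorphism, and we return it alongside $(d_1,\ldots,d_s)$. Evaluating $\alpha$ on a tuple amounts to reducing each $a_i$ modulo $d_i$, forming each $x_i^{a_i}$ by repeated squaring in $O(\log d_i)$ group operations, and multiplying the $s$ results, for $O(s\log n)$ operations in total.

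I expect the only delicate point to be the complexity accounting in the second reduction: one must confirm that the description of $\alpha$ really does evaluate in polynomial time, which rests on repeated squaring and on the bound $s\leq\log_2 n$ for the number of (nontrivial) invariant factors of an abelian group of order $n$. There is no genuine mathematical obstacle here — only the routine verification that the objects produced by both reductions meet the ``polynomial-time evaluation'' clauses in the statements of {\sf 1-AbelRecog} and {\sf CanonicalBasis}.
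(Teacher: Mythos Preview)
Your proof is correct and follows exactly the same approach as the paper: in one direction you evaluate $\alpha$ at the standard basis vectors to obtain a canonical basis, and in the other you send a tuple $(a_1,\ldots,a_s)$ to $x_1^{a_1}\cdots x_s^{a_s}$. Your version is more fully spelled out (the verification that $\alpha$ is an isomorphism, the repeated-squaring complexity estimate), but the underlying argument is identical to the paper's.
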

\begin{proof}
Given a canonical basis $x_1,\ldots,x_s$ of $G$ with known orders, return
$d_i=|x_i|$ together with
$(f_1,\dots,f_s)\mapsto x_1^{f_1}\cdots x_s^{f_s}$. Given a one-way isomorphism
$\alpha: \mathbb{Z}/d_1\times\cdots \times \mathbb{Z}/d_s\to G$ with 
$d_1|\cdots |d_s$, then $g_1,\ldots,g_s$ is a canonical basis for $G$, where each  $g_i=(0,\ldots,1,\ldots,0)\alpha$ with
$1$ in position~$i$.
\end{proof}

The known approaches to solve {\sf CanonicalBasis} in one way or another
reduce to the following problem, which we shall also use.
  
\mydes{ExtendedDiscreteLog (EDL)}
{a basis $x_1,\ldots,x_s:\code{A}$  of a finite abelian group $G:\code{Group[A]}$, and
$g:\code{A}$;}
{$f_1,\ldots,f_s:\code{Int}$ such that $g=x_1^{f_1}\cdots x_s^{f_s}$, or \code{Nothing} if $g\notin G$.}

The next theorem, due to Teske \cite{Teske}*{p.\ 523}, uses baby-step giant-step methods to achieve the stated  bound; let $\epsilon(G,p)$ be a bound on the $\log_p$ of the exponent of $G$, and write $d(G)$ for the size of a minimal generating set of $G$.

\begin{theorem}[(\cite{Teske})]\label{thm:EDL}
For a finite abelian $p$-group  $G:\code{Group[A]}$, there is a deterministic
algorithm to solve {\sf EDL} in time $O(\epsilon(G,p)\lceil p^{1/2}\rceil^{d(G)})$.
\end{theorem}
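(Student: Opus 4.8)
The plan is to follow the layered baby-step giant-step strategy of Teske \cite{Teske}, reconstructing the exponent vector $(f_1,\dots,f_s)$ of $g$ one $p$-adic ``digit layer'' at a time. First I would compute the orders $p^{e_1},\dots,p^{e_s}$ of the basis elements $x_1,\dots,x_s$ by repeated $p$-th powering (basis elements of order $1$ are discarded, so afterwards $s=d(G)$), set $\epsilon=\max_i e_i\le \epsilon(G,p)$, and precompute $y_i:=x_i^{p^{e_i-1}}$, each of order exactly $p$. Since $G=\langle x_1\rangle\times\cdots\times\langle x_s\rangle$, for every $h$ the elements $y_i$ with $e_i\ge h$ are linearly independent and generate an elementary abelian subgroup $E_h\cong(\mathbb Z/p)^{r_h}$ with $r_h=|\{i:e_i\ge h\}|\le d(G)$.

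The search runs over heights $h=\epsilon,\epsilon-1,\dots,1$. I would maintain a residual element $g_{\mathrm{res}}$, initially $g$, and partial exponents $f_i$, initially $0$, with the invariant that, when $g\in G$, every $p$-adic digit of $f_i$ in a position $<e_i-h$ has already been recorded in $f_i$ and stripped from $g_{\mathrm{res}}$. At height $h$ one computes $w:=g_{\mathrm{res}}^{p^{h-1}}$. Using the order relations one checks the clean identity $w=\prod_{i:\,e_i\ge h} y_i^{c_i}$, where $c_i\in\{0,\dots,p-1\}$ is exactly the digit of $f_i$ in position $e_i-h$: components with $e_i<h$ contribute nothing since $x_i^{p^{h-1}}=1$, a component with $e_i=h$ contributes $y_i$ raised to its lowest digit, and a component with $e_i>h$ contributes $y_i$ raised to a single digit precisely because its lower digits have already been stripped. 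As the $y_i$ form a genuine basis of $E_h$, this is a discrete logarithm in $(\mathbb Z/p)^{r_h}$ with a \emph{unique} solution $(c_i)_{i:\,e_i\ge h}$; I would solve it, then for those $i$ put $f_i\leftarrow f_i+c_i p^{e_i-h}$ and $g_{\mathrm{res}}\leftarrow g_{\mathrm{res}}\cdot x_i^{-c_i p^{e_i-h}}$, and decrement $h$.

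Each layer is thus a discrete logarithm in an elementary abelian group with an \emph{explicit} generating basis, so elements are compared simply by equality in $G$ and no quotient is ever formed. I would solve it by the coordinatewise baby-step giant-step split: with $m=\lceil p^{1/2}\rceil$ (so $m^2\ge p$) write each $c_i=u_i+mv_i$ with $u_i,v_i\in\{0,\dots,m-1\}$, tabulate the $m^{r_h}$ baby steps $\prod_{i:\,e_i\ge h} y_i^{u_i}$, and scan the $m^{r_h}$ giant steps $w\cdot\prod_{i:\,e_i\ge h} y_i^{-mv_i}$ against the table. If any lookup fails, or if $g_{\mathrm{res}}\ne 1$ after the height-$1$ step, I would return \code{Nothing}; otherwise I would return $(f_1,\dots,f_s)$. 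This is correct: a successful run exhibits $g=\prod_i x_i^{f_i}\in G$, while if $g\in G$ the true digits make every lookup succeed and drive $g_{\mathrm{res}}$ to $1$. A layer costs $O(m^{r_h})=O(\lceil p^{1/2}\rceil^{d(G)})$ group operations, and summing over the at most $\epsilon(G,p)$ heights (the remaining powerings, table construction, and updates contributing only lower-order factors) yields the bound $O(\epsilon(G,p)\lceil p^{1/2}\rceil^{d(G)})$.

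The crux is the scheduling of digits by the quantity $e_i-j$. The naive stratification by the power subgroups $G\ge G^{[p]}\ge G^{[p^2]}\ge\cdots$ would demand, at layer $j$, a discrete logarithm in $G^{[p^j]}/G^{[p^{j+1}]}$, and comparing elements there means deciding membership in $G^{[p^{j+1}]}$ --- which for a non-homogeneous $p$-group is not detected by any single power relation and is itself essentially an instance of {\sf EDL}, so the approach is circular. Processing digits in order of decreasing height $e_i-j$ breaks the circle: once all higher-height digits are stripped, raising $g_{\mathrm{res}}$ to $p^{h-1}$ pushes every surviving component into the elementary abelian subgroup $E_h$, reducing the layer problem to an honest discrete logarithm inside a concrete subgroup of $G$. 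Establishing the stripping invariant and the resulting identity for $w$ is the main technical point; everything else is baby-step giant-step and elementary arithmetic.
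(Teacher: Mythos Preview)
The paper does not give its own proof of this theorem: it is stated with attribution to Teske \cite{Teske} and immediately followed by a remark, with no argument in between. Your proposal is a faithful and correct reconstruction of Teske's generalized Pohlig--Hellman strategy (the height-stratified stripping of $p$-adic digits, reducing each layer to a baby-step giant-step search in an elementary abelian subgroup), and the invariant you isolate is exactly the one that makes the method work; the complexity count is also right. So there is nothing to compare against in the paper itself, but your write-up stands on its own as a valid proof of the cited result.
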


\begin{remark}\label{remTeske}
It should be emphasized that the difficulty of (extended) discrete logarithm
problems can be confusing, because it is often quantified without explicit 
details about the inputs. For instance, for groups $G:\code{Group[List[Int]]}$
and $G:\code{Group[Perm[A]]}$, Sims  \cite{handbook}*{Section~9.2} has shown a polynomial-time solution based  
on Hermite normal forms.  Likewise, if 
$G:\code{Group[Units[K]]}$ is a group of units of a finite field, or if 
$G:\code{Group[Elliptic[K]]}$ is a group on the points of an elliptic curve (such
as in applications to cryptography), then approaches based on the Number Field
Sieve \cite{Buhler,Gordan} can  
be applied to solve the discrete logarithm problem in expected running time $\exp(\tilde{O}((\log |G|)^{1/3})))$.\footnote{Original work on Number Field Sieves depended on heuristic time
bounds, see \cite{NFS} for rigorous complexity statements.} Teske's  complexity 
$O(\sqrt{|G|})$ for the general case $G:\code{Group[A]}$
indicates that without specific knowledge about parameter type $\code{A}$, the
problem is substantially harder.
\end{remark}

\pagebreak 
\begin{proposition} Problem
{\sf 2-AbelRecog} is polynomial-time equivalent to the pair {\sf EDL} and 
{\sf 1-AbelRecog}.
\end{proposition}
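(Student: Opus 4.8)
The plan is to prove the two directions of the equivalence separately, and in both the work is little more than matching up the definitions.

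\emph{Building {\sf 2-AbelRecog} from {\sf EDL} and {\sf 1-AbelRecog}.} I would first call {\sf 1-AbelRecog} on the input $G:\code{Group[A]}$ to obtain integers $d_1\mid\cdots\mid d_s$ and a map $\alpha$ describing an isomorphism $\alpha\colon\mathbb{Z}/d_1\times\cdots\times\mathbb{Z}/d_s\to G$ with polynomial-time evaluation; this already supplies everything {\sf 2-AbelRecog} demands except the inverse. Put $x_i=(0,\ldots,1,\ldots,0)\alpha$ with the $1$ in position $i$, so that, exactly as in the proof of the previous proposition, $x_1,\ldots,x_s$ is a canonical basis of $G$. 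I would then \emph{define} the candidate inverse $\alpha^{-1}\colon\code{A}\dashrightarrow\code{List[Int]}$ on an input $g:\code{A}$ by querying {\sf EDL} with basis $x_1,\ldots,x_s$ and element $g$: if the query returns $f_1,\ldots,f_s$, output $(f_1\bmod d_1,\ldots,f_s\bmod d_s)$, and if it returns \code{Nothing}, output \code{Nothing}. Since $\alpha$ is an isomorphism carrying the $i$-th standard generator to $x_i$, any relation $g=x_1^{f_1}\cdots x_s^{f_s}$ forces $(f_1\bmod d_1,\ldots,f_s\bmod d_s)$ to be the genuine preimage of $g$; hence this function agrees with the inverse of $\alpha$ on $G$ and is \code{Nothing} off $G$, and each evaluation costs one {\sf EDL} call plus $O(s)$ reductions, so it has polynomial-time evaluation relative to the {\sf EDL} oracle. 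Returning $d_1,\ldots,d_s$, $\alpha$, and this $\alpha^{-1}$ solves {\sf 2-AbelRecog}.

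\emph{Recovering {\sf EDL} and {\sf 1-AbelRecog} from {\sf 2-AbelRecog}.} That {\sf 1-AbelRecog} $\leq_P$ {\sf 2-AbelRecog} is immediate: run {\sf 2-AbelRecog} and discard $\alpha^{-1}$. For {\sf EDL} $\leq_P$ {\sf 2-AbelRecog}, given a basis $x_1,\ldots,x_s$ of $G$ and an element $g$, I would apply {\sf 2-AbelRecog} to $G$ to obtain $d_1\mid\cdots\mid d_t$, an isomorphism $\alpha\colon P\to G$ with $P=\mathbb{Z}/d_1\times\cdots\times\mathbb{Z}/d_t$, and its inverse $\alpha^{-1}$, both with polynomial-time evaluation. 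Transport the instance into $P$ by setting $y_i=(x_i)\alpha^{-1}$ and $h=(g)\alpha^{-1}$; as $\alpha^{-1}$ is an isomorphism, $y_1,\ldots,y_s$ is again a basis, now of the explicitly encoded group $P:\code{Group[List[Int]]}$. Solving $h=y_1^{f_1}\cdots y_s^{f_s}$ in $P$ is precisely {\sf EDL} for a group of type \code{Group[List[Int]]}, that is, a system of linear congruences, and this is solvable in polynomial time by Sims's method based on Hermite normal forms (see Remark~\ref{remTeske}). I would output the resulting $f_1,\ldots,f_s$; and if $g\notin G$ then $(g)\alpha^{-1}$ evaluates to \code{Nothing}, in which case we return \code{Nothing}.

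\emph{Expected obstacle.} There is no real difficulty to overcome here --- the proof is essentially bookkeeping --- but two points want a little care. First, the partial-function conventions must be used consistently: \code{Nothing} has to propagate through both reductions so that terms $g:\code{A}$ with $g\notin G$ are reported correctly. Second, one should notice that the first reduction genuinely needs the inverse map $\alpha^{-1}$ and not merely $\alpha$: evaluating the inverse of the basis isomorphism is exactly an {\sf EDL} query, which is why {\sf 1-AbelRecog} by itself cannot stand in for {\sf 2-AbelRecog} in this equivalence.
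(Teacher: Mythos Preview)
Your proposal is correct and follows the same strategy as the paper: use {\sf 1-AbelRecog} to get $\alpha$ and then implement $\alpha^{-1}$ via {\sf EDL} on the canonical basis for one direction, and read off coordinates via $\alpha^{-1}$ for the other. The paper's proof is terser and, for the reduction {\sf 2-AbelRecog} $\Rightarrow$ {\sf EDL}, only observes that $g\alpha^{-1}=(f_1,\ldots,f_s)$ gives $g=x_1^{f_1}\cdots x_s^{f_s}$ in the \emph{canonical} basis $x_i=(0,\ldots,1,\ldots,0)\alpha$; it does not explicitly treat the case of an arbitrary input basis. Your extra step---transporting the given basis through $\alpha^{-1}$ into $\mathbb{Z}/d_1\times\cdots\times\mathbb{Z}/d_t$ and solving the resulting integer linear system by Hermite normal form---fills that gap and makes the reduction work for {\sf EDL} exactly as stated. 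So your argument is essentially the paper's, but more complete on this point.
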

\begin{proof}
If $\alpha$ is a two-way isomorphism, then we also have a one-way isomorphism, and
hence a canonical basis $x_1,\ldots,x_s$. Now $g\in G$  yields 
$g\alpha^{-1}=(f_1,\ldots,f_s)$ and 
$g = g\alpha^{-1}\alpha=x_1^{f_1}\cdots x_s^{f_s}$, so this solves {\sf EDL}.
If we have a one-way recognition 
$\alpha\colon\mathbb{Z}/d_1\times\cdots\times\mathbb{Z}/d_s\to G$, we also have
a canonical basis and hence we can ask to solve {\sf EDL} in $G$; doing so is 
equivalent to asking for inverse images of $\alpha$; this solves {\sf 2-AbelRecog}.   
\end{proof}
 
Applied to abelian groups whose prime divisors are known, Karagiorgos \& Poulakis
\cite{KP} achieve the following; note that in Theorem~\ref{thm:KP} we give a slightly simpler, but less strict bound compared to the one
proved in \cite{KP}.  For an abelian group $G$ denote by $G_p$ its $p$-torsion subgroup; if $\pi$ is a set of primes, then $G$ is a \emph{$\pi$-group} if the prime divisors of the order $|G|$ all lie in $\pi$.  
 
\begin{theorem}[(\cite{KP}*{Theorem~1})]\label{thm:KP}
For a finite set $\pi$ of primes, there is a deterministic algorithm that 
given an abelian black-box type $\pi$-group $G=\langle S\rangle$ of known order
$n$, returns a canonical basis for $G$ in time 
{\small
\begin{equation*}
O\left(|S|(\log n)^3  
+|S|\sum\nolimits_{p\in \pi} {\epsilon(G,p)}\lceil p^{1/2}\rceil^{d(G_p)-1}\right).
\end{equation*}}
\end{theorem}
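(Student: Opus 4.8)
The plan is to reduce to abelian $p$-groups and then, for each prime separately, build the group up one cyclic direct summand at a time, using the algorithm of Theorem~\ref{thm:EDL} for {\sf EDL} as the only source of discrete-logarithm work. Since $\pi$ is given and $n=|G|$ is known, for each $p\in\pi$ one extracts the exact $p$-power $n_p$ dividing $n$ by repeated division ($O((\log n)^2)$ bit operations altogether). Because $\gcd(n/n_p,n_p)=1$, the $p$-torsion subgroup is $G_p=G^{[n/n_p]}=\langle s^{n/n_p}:s\in S\rangle$, of known order $n_p$, and $G=\prod_{p\in\pi}G_p$; since $|\pi|\le\log_2 n$, forming all the $G_p$ by repeated squaring costs $O(|S|(\log n)^2)$ group operations. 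A canonical basis of $G$ is assembled from canonical bases of the $G_p$ by the usual interleaving over the primes --- pad the exponent sequences with zeros to a common length and take coordinatewise products of basis vectors, which keeps the divisibility chain since the orders are pairwise coprime --- at a further $O((\log n)^2)$ operations. All of this fits into the $O(|S|(\log n)^3)$ term, so it suffices to compute a canonical basis of a single abelian $p$-group $G_p$ of known order $p^a$ on at most $|S|$ generators.

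For this I would build an increasing chain $1=N_0<N_1<\cdots<N_d=G_p$, stopping as soon as $|N_i|=p^a$ (so that $N_i=G_p$, since it is a subgroup of the same order), together with a canonical basis of each $N_i$. At step $i$ the quotient $G_p/N_i$ is an abelian $p$-group of known order generated by the images of the generators of $G_p$; its group operations are those of $G_p$, and --- crucially --- its equality test is one instance of {\sf EDL} in $N_i$ relative to the canonical basis of $N_i$ already computed. Among the generator images, pick $\bar x_i$ of maximal order in $G_p/N_i$; this is legitimate, since a generating set of a finite abelian $p$-group always contains an element whose order is the exponent (projecting onto a largest cyclic direct factor, the images of the generators generate it, so one of them has full order there). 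A cyclic subgroup of maximal order is a direct summand, so after correcting an arbitrary preimage $x_i\in G_p$ of $\bar x_i$ by an element of $N_i$ --- found from one further {\sf EDL} in $N_i$ expressing $x_i^{|\bar x_i|}\in N_i$ in the basis of $N_i$, followed by a modular root extraction --- one obtains $x_i'$ with $\langle N_i,x_i'\rangle$ the internal direct product $N_i\times\langle x_i'\rangle=:N_{i+1}$; a canonical basis of $N_{i+1}$ is then read off by sorting the invariant factors of $N_i$ together with $|\bar x_i|$. The loop terminates in exactly $d=d(G_p)$ steps, and the canonical basis of $N_d=G_p$ is the output.

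For the running time the key point is that $G_p$ itself is never presented to {\sf EDL}: every {\sf EDL} call is inside some $N_i$, and the $N_i$ used at step $i$ has rank $i$, with $i$ ranging only up to $d(G_p)-1$. By Theorem~\ref{thm:EDL} such a call costs $O(\epsilon(G,p)\lceil p^{1/2}\rceil^{i})$, so step $i$ --- which makes $O(|S|)$ calls (one per generator, to get the coset order) plus $O(1)$ more (for the correction) --- costs $O(|S|\,\epsilon(G,p)\lceil p^{1/2}\rceil^{i})$ group operations up to polynomial overhead in $\log n$, and summing this geometric progression over $i=0,\dots,d(G_p)-1$ leaves $O(|S|\,\epsilon(G,p)\lceil p^{1/2}\rceil^{d(G_p)-1})$ for the $p$-part. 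The coordinate bookkeeping, root extractions and sorting contribute only $(\log n)^{O(1)}$ per step; summing over the $|\pi|\le\log_2 n$ primes and combining with the decomposition cost gives the claimed $O\!\left(|S|(\log n)^3+|S|\sum_{p\in\pi}\epsilon(G,p)\lceil p^{1/2}\rceil^{d(G_p)-1}\right)$.

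The conceptual ingredients --- a maximal-order cyclic subgroup is a direct summand, and everything else is discrete logarithm in an already-decomposed subgroup --- are classical, so I expect the genuine difficulty to be the complexity accounting that delivers the exponent $d(G_p)-1$ in place of the naive $d(G_p)$: one must flatten what looks like a recursion into a single pass so that per-step costs add rather than multiply; one must keep $G_p$ out of every {\sf EDL} instance (only its proper direct-summand subgroups appear); and one must compute the coset orders at the deepest step, where the rank-$(d(G_p)-1)$ calls occur, efficiently enough that no extra factors of $\log n$ or $\epsilon(G,p)$ creep onto the exponential term (this is where invoking Teske's algorithm, whose cost already absorbs the layering over the $\epsilon(G,p)$ powers of $p$, rather than iterating membership tests layer by layer, matters; the less sharp bound stated here, following the simplification noted before Theorem~\ref{thm:KP}, is what makes this manageable). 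The possible non-splitting of the extension $N_i\to N_{i+1}\to N_{i+1}/N_i$ only affects the correction term $x_i'$ and is handled by the modular root extraction, so it is a minor point.
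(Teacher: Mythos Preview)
The paper does not prove this theorem: it is quoted from Karagiorgos--Poulakis \cite{KP}*{Theorem~1}, with the remark that the stated bound is a slight simplification of theirs. So there is no ``paper's own proof'' to compare against beyond the citation.

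That said, your sketch is a reasonable reconstruction of the argument and captures the key idea responsible for the exponent $d(G_p)-1$ rather than $d(G_p)$: every call to {\sf EDL} takes place inside a proper direct summand $N_i$ of $G_p$, never in $G_p$ itself, and the rank of $N_i$ at step $i$ is $i\leq d(G_p)-1$. The reduction to primary components, the use of Teske's algorithm from Theorem~\ref{thm:EDL}, and the final interleaving across primes are all as one would expect.

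One point deserves to be made explicit. You invoke ``a cyclic subgroup of maximal order is a direct summand'' in $G_p/N_i$ to justify the correction step, and then assert that the $|\bar x_i|$-th root of $x_i^{|\bar x_i|}$ exists in $N_i$. That root need not exist for an arbitrary subgroup $N_i$ (take $G_p=\mathbb{Z}/p^2$ and $N_i=\langle p\rangle$); what makes it work is the inductive invariant that each $N_i$ is itself a direct summand of $G_p$. This invariant holds for $N_0=1$, and once you have $G_p=N_i\times M$ and pick $\bar x_i$ of maximal order in $M\cong G_p/N_i$, the summand $\langle \bar x_i\rangle$ of $M$ yields $G_p=N_i\times\langle m\rangle\times M'$, whence $x_i^{|\bar x_i|}$ is visibly a $|\bar x_i|$-th power in $N_i$ and $N_{i+1}=N_i\times\langle x_i'\rangle$ is again a summand. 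Stating this invariant would close the only real gap in your argument; the remaining complexity bookkeeping you flag in your final paragraph is exactly where the care in \cite{KP} goes.
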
 

As suggested in the timing estimate, the proof of Theorem~\ref{thm:KP} applies {\sf EDL} a polynomial number of times; indeed, that proof  
demonstrates the next proposition.

\begin{proposition}\label{prop:EDL}
For a finite set $\pi$ of primes and an abelian $\pi$-group of black-box type of known order,
problem {\sf CanonicalBasis} (equivalently {\sf 1-AbelRecog}) is polynomial-time 
reducible to {\sf EDL}; in particular, {\sf 2-AbelRecog} is polynomial-time equivalent to 
{\sf EDL}.
\end{proposition}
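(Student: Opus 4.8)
The plan is to make explicit the remark following Theorem~\ref{thm:KP}, namely that the Karagiorgos--Poulakis canonical-basis procedure makes only polynomially many {\sf EDL} calls, and then to combine this with the reductions already established. First I would pass from a general abelian $\pi$-group $G$ to abelian $p$-groups: since $\pi=\{p_1,\dots,p_k\}$ is finite and $n=|G|$ is known, trial division yields $n=p_1^{e_1}\cdots p_k^{e_k}$, and for $m_i=n/p_i^{e_i}$ the elements $\{s^{m_i}:s\in S\}$ generate the Sylow subgroup $G_{p_i}=G^{[m_i]}$; forming these is elementary. A canonical basis of $G$ is then recovered from canonical bases of the $G_{p_i}$ by aligning the lists at their large-order ends, padding the short ones with $1$, and multiplying entrywise, the coprimality of the $G_{p_i}$ making the products independent with orders multiplying. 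Thus it suffices to solve {\sf CanonicalBasis} for an abelian $p$-group $G$ with $|G|=p^e$ known, using an oracle for {\sf EDL}.

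For the $p$-group I would process the generators $s_1,\dots,s_m$ one at a time, maintaining a canonical basis $x_1,\dots,x_t$ (with known orders $p^{\lambda_1}\leq\cdots\leq p^{\lambda_t}$) of the subgroup $H=\langle s_1,\dots,s_j\rangle$ generated so far. To incorporate $g=s_{j+1}$, call {\sf EDL} on $(x_1,\dots,x_t)$ with target $g$; if it returns exponents then $g\in H$ and nothing changes. Otherwise, applying {\sf EDL} on $(x_1,\dots,x_t)$ to the successive targets $g^{p},g^{p^2},\dots$ locates within $e$ calls (since $g^{p^e}=1\in H$) the least $r\geq 1$ with $g^{p^r}\in H$, together with $b_1,\dots,b_t$ such that $g^{p^r}=x_1^{b_1}\cdots x_t^{b_t}$. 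Let $M$ be the $(t+1)\times(t+1)$ integer matrix with rows $p^{\lambda_i}\mathbf{e}_i$ for $1\leq i\leq t$ and last row $(-b_1,\dots,-b_t,p^r)$. Each row is a genuine relation among the generators $(x_1,\dots,x_t,g)$ of $H'=\langle H,g\rangle$; moreover $H'/H=\langle gH\rangle$ has order $p^r$ (the least $k\geq 1$ with $g^k\in H$ is $p^r$ since $H'$ is a $p$-group), so the lattice spanned by the rows of $M$ has index $|\det M|=p^{\lambda_1+\cdots+\lambda_t}\cdot p^r=|H|\cdot p^r=|H'|$ in $\mathbb{Z}^{t+1}$, hence equals the full relation lattice of $H'$. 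A Smith-normal-form computation over $\mathbb{Z}$ then produces integers $d_1\mid\cdots\mid d_{t+1}$ and a new tuple $y_1,\dots,y_{t+1}$ --- each an explicit word in $x_1,\dots,x_t,g$ --- generating $H'$ independently with $|y_l|=d_l$; discarding the $y_l$ with $d_l=1$ gives the updated canonical basis. Iterating over all generators yields a canonical basis of $G$.

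It remains to check the accounting and to assemble the reductions. The maintained basis always has length $t=d(H)\leq d(G)\leq\log_p n$, so the algorithm issues $O(|S|\log n)$ {\sf EDL} calls, each on a basis of size $O(\log n)$ and an element given by a polynomial-length straight-line program, while the non-oracle work per step (forming the powers $g^{p^k}$, one Smith normal form on an $(e+1)\times(e+1)$ matrix with entries below $n$, and evaluating the words for the $y_l$) is polynomial in $|S|$ and $\log n$. Hence {\sf CanonicalBasis}, and therefore {\sf 1-AbelRecog}, is polynomial-time reducible to {\sf EDL}. Combining this with the proposition that {\sf 2-AbelRecog} is polynomial-time equivalent to the pair {\sf EDL} and {\sf 1-AbelRecog}, and with the trivial reduction {\sf EDL}$\leq_P${\sf 2-AbelRecog} (a two-way isomorphism supplies a canonical basis and reads off preimages), yields {\sf 2-AbelRecog}$\equiv_P${\sf EDL}. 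The one genuinely delicate point is the update step: verifying that the listed relations generate the \emph{whole} relation lattice of $H'$ --- which relies on the index computation and on $[H':H]=p^r$ in a $p$-group --- and keeping the maintained basis length bounded by $\log_p n$ so that both the number of oracle calls and the sizes of the linear-algebra instances remain polynomial; the rest is routine integer linear algebra and straight-line-program manipulation. (Alternatively, one may simply invoke the proof of Theorem~\ref{thm:KP} and observe that each of its nontrivial steps is an instance of {\sf EDL}.)
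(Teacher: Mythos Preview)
Your argument is correct. The paper does not give a separate proof of this proposition at all: the sentence preceding it simply observes that the proof of Theorem~\ref{thm:KP} (Karagiorgos--Poulakis) already applies {\sf EDL} a polynomial number of times, and declares that this demonstrates the proposition. Your write-up is thus a self-contained expansion of exactly what the paper defers to---the Sylow reduction, the incremental extension of a partial canonical basis by {\sf EDL} calls plus an integer Smith-normal-form step, and the bookkeeping bounding the number of oracle calls by $O(|S|\log n)$---and you yourself note this shortcut in your final parenthetical. The one place worth a second look in your version is the relation-lattice claim: you correctly argue that the rows of $M$ span the full relation lattice of $H'=\langle H,g\rangle$ by comparing $|\det M|$ with $|H'|$, and that $[H':H]=p^r$ because $H'$ is a $p$-group; this is exactly the point that makes the SNF update sound, so it is good that you flagged it.
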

 
Among the implications of Theorem~\ref{thm:KP} is that {\sf 1-AbelRecog} can be solved 
in polynomial time for cyclic $\pi$-groups of known order $n$, see \cite{KP}*{Corollary~1}.  
Of course, as we have mentioned above, this gives only a one-way isomorphism 
$\alpha: \mathbb{Z}/n\to G$. A two-way isomorphism would be equivalent to solving the
discrete logarithm problem in $G$ with respect to a given cyclic generator.  A further 
nuance is that even small primes present a challenge to computation if they occur in 
large powers.  Perhaps surprising, using {\sf 1-AbelRecog} to verify that an abelian 
group is isomorphic to $C_{2}^n$ seems to require $\Theta(2^{n/2})$ operations.

\subsection{Counting}

We have seen some algorithms that will contribute to 
Theorem~\ref{thm:iso-abelian-BB}; here  we focus on the estimates on the number 
of group orders for which we will apply these algorithms. 

\begin{definition}\label{def:eventually-sq-free}
A positive integer $n$ is \emph{pseudo-square-free} if $n=p_1^{e_1}\ldots p_k^{e_k}b$, 
where $b$ is square-free, every prime divisor of $b$ is bigger than $\log\log n$, the 
primes $p_1,\ldots,p_k\leq \log\log n$ are all distinct and coprime to $b$, and each 
$p_i^{e_i}\leq \log n$.
\end{definition}
 
We often use $b$  for the components of an integer $n$ whose prime divisors 
are {\em b}igger than $\log\log n$; we also use $B$ for a subgroup of a group of size $n$ 
such that $|B|$ has this property.

\begin{lemma}\label{lem:Count-1}
The set $\mathcal{D}$ of pseudo-square-free integers is dense.
\end{lemma}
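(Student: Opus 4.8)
The plan is to show that the complement of $\mathcal{D}$ within $\{1,\ldots,N\}$ has density zero, by identifying two "bad events'' that an integer $n$ must avoid in order to be pseudo-square-free and bounding each. Recall that $n\notin\mathcal{D}$ precisely when, writing $n=\prod p^{v_p(n)}$, either (i) some prime $p\leq\log\log n$ satisfies $p^{v_p(n)}>\log n$, or (ii) some prime $p>\log\log n$ satisfies $v_p(n)\geq 2$. (The distinctness and coprimality conditions in Definition \ref{def:eventually-sq-free} are automatic once we fix the factorisation by size of prime, so they impose nothing extra.) So it suffices to bound the counting functions of the two sets
\[
\mathcal{B}_1(N)=\{n\leq N:\exists\,p\leq\log\log n,\ p^{v_p(n)}>\log n\},\qquad
\mathcal{B}_2(N)=\{n\leq N:\exists\,p>\log\log n,\ p^2\mid n\},
\]
and show $|\mathcal{B}_1(N)|+|\mathcal{B}_2(N)|=o(N)$.

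For $\mathcal{B}_2(N)$, the standard square-full-part estimate does the job. The number of $n\leq N$ divisible by $p^2$ is at most $N/p^2$, so
\[
|\mathcal{B}_2(N)|\;\leq\;\sum_{p>\log\log N}\frac{N}{p^2}\;\leq\;N\sum_{m>\log\log N}\frac{1}{m^2}\;=\;O\!\left(\frac{N}{\log\log N}\right)\;=\;o(N),
\]
using that $\log\log n\geq\log\log(\sqrt N)=\log\log N-\log 2$ for $n\geq\sqrt N$ (and there are only $\sqrt N$ exceptions below that, which are negligible). For $\mathcal{B}_1(N)$: for a fixed prime $p$ and a fixed exponent $e$, the number of $n\leq N$ with $v_p(n)=e$ is at most $N/p^e$; the condition $p^e>\log N$ forces $e>\log\log N/\log p$, so summing the geometric tail in $e$ and then over $p\leq\log\log N$ gives
\[
|\mathcal{B}_1(N)|\;\leq\;\sum_{p\leq\log\log N}\ \sum_{e:\,p^e>\log N}\frac{N}{p^e}\;\leq\;\sum_{p\leq\log\log N}\frac{2N}{\log N}\;\leq\;\frac{2N\,\log\log N}{\log N}\;=\;o(N),
\]
where again one handles the mild dependence of $\log\log n$ on $n$ by peeling off the $n\leq\sqrt N$ range. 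Combining the two bounds, $|\{1,\ldots,N\}\setminus\mathcal{D}|=o(N)$, hence $\lim_{N\to\infty}|\mathcal{D}\cap\{1,\ldots,N\}|/N=1$, which is the claim.

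The only genuinely delicate point is the self-referential nature of the thresholds $\log\log n$ and $\log n$ in Definition \ref{def:eventually-sq-free}: the "bad'' condition on $n$ depends on $n$ itself. The clean way around this, which I would carry out carefully, is to split $\{1,\ldots,N\}$ at $\sqrt N$: the interval $[1,\sqrt N]$ contributes at most $\sqrt N=o(N)$ integers to the complement regardless, and on $[\sqrt N,N]$ we have the monotone sandwich $\tfrac12\log N\leq\log n\leq\log N$ and $\log\log N-1\leq\log\log n\leq\log\log N$, so the thresholds may be replaced by fixed functions of $N$ (losing only constant factors) and the two sums above go through verbatim. Alternatively one can sum dyadically over $N/2<n\leq N$ and add up, but the single split at $\sqrt N$ is simplest. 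Everything else is elementary; no input beyond crude divisor counting and convergence of $\sum 1/m^2$ is needed, and in particular the Erd\H{o}s--P\'alfy-type refinements are not required for mere density.
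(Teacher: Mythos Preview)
Your proof is correct and follows the same overall plan as the paper's: split the complement of $\mathcal{D}$ into the two bad sets $\mathcal{B}_1$ (small prime with too-large prime-power part) and $\mathcal{B}_2$ (large prime squared dividing $n$), and peel off $n\leq\sqrt{N}$ to freeze the self-referential thresholds. The differences are in the execution. For $\mathcal{B}_2$ the paper simply quotes Erd\H{o}s--P\'alfy \cite{ErdosPalfy}*{Lemma~3.5}, whereas you give the one-line direct estimate $\sum_{p>\log\log N} N/p^2=O(N/\log\log N)$; your version is self-contained and just as short. For $\mathcal{B}_1$ the paper bounds the relevant exponent from below by $c(x)=\log\log\sqrt{x}/\log\log\log x$ and then controls $\sum_{k\leq\log\log\sqrt{x}} x/k^{c(x)}$ via an integral, arriving at $o(x)$ through a somewhat delicate limit argument; your geometric-tail bound $\sum_{e:\,p^e>\log N}p^{-e}\leq 2/\log N$ followed by a trivial sum over $p\leq\log\log N$ is considerably more direct and gives the sharper explicit bound $O(N\log\log N/\log N)$. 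So the two proofs agree in structure, but yours is the more elementary and transparent route.
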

\begin{proof}
It follows from  Erd\H{o}s-P\'alfy \cite{ErdosPalfy}*{Lemma~3.5}, that almost every integer 
$n$ satisfies the following: if a prime $p>\log\log n$ divides $n$, then $p^2\nmid n$. Thus 
almost every $n$  can be decomposed as $n=p_1^{e_1}\ldots p_k^{e_k}b$ with $b$ square-free 
such that every prime divisor of $b$ is greater than $\log\log n$, and $p_1,\ldots,p_k\leq \log\log n$ 
are distinct primes coprime to $b$. Let $x>0$ be an integer. We now compute an estimate for 
the number $N(x)$ of integers $0<n\leq x$ which are divisible by a prime $p\leq \log \log n$ 
such that the largest $p$-power $p^e$ dividing $n$ satisfies $p^e>\log n$. We want to show 
that $N(x)/x \to 0$ for $x\to \infty$; this proves that for almost all integers $n$, if 
$p^e\mid n$ with $p\leq \log \log n$, then $p^e\leq \log n$. Together with  \cite{ErdosPalfy}*{Lemma~3.5}, 
our claim then follows. 

To get an upper bound for $N(x)$, we consider integers between $\sqrt{x}$ and $x$ with respect to  
the above property, and add $\sqrt{x}$ for all integers between $1$ and $\sqrt{x}$. Note that 
if $p^e\geq \log n$, then $e\geq \log\log n / \log p$. Since we only consider $\sqrt{x}\leq n\leq x$, 
this yields $e\geq c(x)$ where
	\[c(x)= \log\log \sqrt{x}/\log\log\log x.\] 
Note that $c(x)\to \infty$ if $x\to \infty$; now an upper bound for $N(x)$ is
  \begin{eqnarray*}
    N(x) &\leq & \sqrt{x} + \sum\nolimits_{k=2}^{\lfloor \log \log \sqrt{x}\rfloor}\frac{x}{k^{c(x)}}\\
    &\leq & \sqrt{x} + x \int_2^{\log\log\sqrt{x}} \frac{1}{y^{c(x)}}\text{d}y\\
    &=& \sqrt{x} +  x\cdot \frac{1}{1-c(x)}\left[\frac{1}{(\log\log\sqrt{x})^{c(x)-1}}-\frac{1}{2^{c(x)-1}}\right].
  \end{eqnarray*}
Since $1/(1-c(x))\to 0$ from below, we can estimate:
  \begin{eqnarray*}
    N(x) 
    &\leq & \sqrt{x} +  x\cdot \left|\frac{1}{1-c(x)}\right|\left[-\frac{1}{(\log\log\sqrt{x})^{c(x)-1}}+\frac{1}{2^{c(x)-1}}\right]\\
    &\leq &\sqrt{x} +  x\cdot \left|\frac{1}{1-c(x)}\right|\left[\frac{1}{2^{c(x)-1}}\right],
  \end{eqnarray*}
so $N(x)=o(x)$ since
\[N(x)/x \leq \sqrt{x}/x + \left|\frac{1}{1-c(x)}\right|\left[\frac{1}{2^{c(x)-1}}\right]\to 0\quad \text{if $x\to \infty$}.\qedhere\]
  \end{proof}

\subsection{Proof of Theorem~\ref{thm:iso-abelian-BB} (Isomorphism testing of abelian groups)}
 
We need the following preliminary result.
  
\begin{lemma}\label{lemnab}
Fix $c>1$. There is a polynomial-time algorithm that given a natural number~$n$, 
returns all primes $p<c$, and factors $n=ab$ such that the  prime divisors of $a$ 
and $b$ satisfy $p\leq c$ and $p>c$, respectively. 
\end{lemma}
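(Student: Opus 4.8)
The plan is to exploit that $c$ is a fixed constant, so only finitely many primes fall below it and each can be divided out of $n$ at negligible cost. The one conceptual point is that we must \emph{not} try to factor the remaining cofactor -- that would, in general, be as hard as integer factorization -- and indeed we never need to: stripping off the bounded set of small primes already forces every prime divisor of what is left to exceed $c$.

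Concretely, I would first list the primes $p_1<\cdots<p_t$ below $c$; since $c$ is fixed this is $O(1)$ work (e.g.\ trial division or a sieve up to $\lceil c\rceil$) and $t$ is a constant depending only on $c$. Then, for each $p_i$, compute the exact exponent $e_i$ with $p_i^{e_i}\mid n$ and $p_i^{e_i+1}\nmid n$ by repeatedly dividing $n$ by $p_i$ while the division is exact. Because $p_i^{e_i}\le n$ we have $e_i\le \log_2 n$, so this costs $O(\log n)$ integer divisions on operands of $O(\log n)$ bits, i.e.\ $(\log n)^{O(1)}$ bit operations; over the constantly many $p_i$ the total is still $(\log n)^{O(1)}$.

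Finally set $a=p_1^{e_1}\cdots p_t^{e_t}$ and $b=n/a$, the last division being exact by construction. Every prime divisor of $a$ lies in $\{p_1,\dots,p_t\}$ and hence is $\le c$; and any prime $q\mid b$ divides $n$ yet is coprime to every $p_i$, so $q>c$. Returning the list $p_1,\dots,p_t$ together with the pair $(a,b)$ gives exactly what is claimed, in time polynomial in $\log n$.

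The main obstacle is essentially nil; the only thing to watch is not to over-claim: $b$ is returned unfactored, and the assertion that its prime divisors all exceed $c$ is justified purely by having removed every prime $\le c$, with no primality testing of $b$ required. Degenerate inputs are harmless -- for $n=1$ one gets $a=b=1$.
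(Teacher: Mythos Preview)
Your proof is correct and follows essentially the same approach as the paper: sieve the primes up to $c$, extract from $n$ the maximal power $p^{e_p}$ of each such prime, set $a$ equal to their product and $b=n/a$. The paper's writeup is slightly terser and records the sieve cost as $O(c\log\log c)$ rather than treating it as $O(1)$, but since $c$ is fixed in the lemma this makes no difference.
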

\begin{proof} 
  Let $n_p$ be the largest $p$-power dividing $n$. Let $\pi$ be the set of all primes $p\leq c$; using the Sieve of Eratosthenes, $\pi$  can be  determined in $O(c\log\log c)$ steps. Running over all primes $p\leq c$, we compute $A=\{n_p : \text{$p\leq c$ a prime}\}$, and return $a=\prod_{n_p\in A} n_p$ and $b=n/a$.
\end{proof}

\begin{proof}[Proof of Theorem~\ref{thm:iso-abelian-BB}]
  We prove the theorem for the set $\mathcal{D}$ of pseudo-square-free integers, which we argued in Lemma~\ref{lem:Count-1} is a dense set of integers.
  \smallskip

{\em Algorithm.} Let $G=\langle S\rangle$ and $\tilde{G}=\langle \tilde{S}\rangle$ be abelian black-box type groups of  known order  $n\in\mathcal{D}$. Use Lemma \ref{lemnab} to write $n=ab$ where the prime factors $p$ of $a$, resp. $b$, satisfy $p\leq \log\log n$, or $p>\log\log n$ respectively. Now set $A=\langle s^b : s\in S\rangle$ and $\tilde{A}=\langle s^b : s\in \tilde{S}\rangle$, and use the algorithm of Proposition~\ref{prop:EDL} (via Theorem~\ref{thm:KP}) to decide if there is an isomorphism $A\to \tilde{A}$.  If so return \code{True}, and otherwise \code{False}.\smallskip

{\em Correctness.}  The proof hinges on the assumption that $n$ is pseudo-square-free.  Since $a$ and $b$ are coprime, we can decompose $G=A\times B$ and $\tilde{G}=\tilde{A}\times \tilde{B}$ where $B=\langle s^a:s\in S\rangle$ and $\tilde{B}=\langle s^a:s\in \tilde{S}\rangle$,  so isomorphism is decided by deciding whether $A\cong \tilde{A}$ and $B\cong\tilde{B}$.  Since $|B|=b=|\tilde{B}|$ is square-free, $B\cong \tilde B\cong \mathbb{Z}/b$ holds automatically, so it suffices to test $A\cong \tilde A$.
\smallskip

{\em Timing.} The work of factorization is handled in polynomial time by Lemma~\ref{lemnab}.  
Factoring $a=p_1^{e_1}\ldots p_k^{e_k}$, our assumptions on $a$ imply that for each prime 
divisor $p\mid a$ we have
  \[\epsilon(A,p)\lceil p^{1/2}\rceil^{d(A_p)-1}\leq e_p p^{e_p}\leq (\log\log n)(\log n).\]
Theorem~\ref{thm:KP} now demonstrates that testing $A\cong \tilde A$ is in time $O(|S|(\log n)^2(\log n+ \log\log n))$, hence polynomial in the input size.
\end{proof}  

\begin{corollary}\label{corIF1}
Given an integer factorization oracle, {\sf $1$-AbelRecog} is in Las Vegas polynomial time for groups of pseudo-square-free order.
\end{corollary}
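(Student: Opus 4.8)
The plan is to show that, once we are handed an integer factorization oracle, the only obstacle left in solving {\sf 1-AbelRecog} on a group $G$ of pseudo-square-free order $n$ is the small-prime part, and that this obstacle is exactly the kind {\sf EDL} dispatches via Proposition \ref{prop:EDL}. First I would call the factorization oracle to obtain $n=p_1^{e_1}\cdots p_k^{e_k}b$ with $b$ square-free; since $n$ is pseudo-square-free we may further (using the oracle again, or just reading off the factorization) split $n=ab$ as in Lemma \ref{lemnab} with the primes of $a$ all at most $\log\log n$ and those of $b$ all greater than $\log\log n$, and with each prime-power $p_i^{e_i}\leq\log n$ dividing $a$. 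As in the proof of Theorem \ref{thm:iso-abelian-BB}, coprimality gives a direct decomposition $G=A\times B$ with $A=\langle s^b:s\in S\rangle$ a $\pi$-group for $\pi=\{p_1,\dots,p_k\}$ and $B=\langle s^a:s\in S\rangle$ of square-free order $b$.

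Next I would treat the two factors separately. For $B$, square-freeness forces $B\cong\mathbb{Z}/b$, and a canonical generator is produced in Las Vegas polynomial time: pick a random element, compute its order (which can be read off from the known factorization of $b$ by testing, for each prime $q\mid b$, whether the $(b/q)$-th power is trivial), and repeat until the order is $b$; the expected number of trials is $O(1)$ because a $\varphi(b)/b\geq c/\log\log b$ fraction of elements of $\mathbb{Z}/b$ are generators, and in fact for square-free $b$ a constant fraction works for each prime so a logarithmic number of samples suffices with high probability. For $A$, apply the algorithm of Proposition \ref{prop:EDL} (through Theorem \ref{thm:KP}) to the $\pi$-group $A$ of known order: this returns a canonical basis of $A$ in time bounded, prime by prime, by $\epsilon(A,p)\lceil p^{1/2}\rceil^{d(A_p)-1}\leq e_p p^{e_p}\leq(\log\log n)(\log n)$, exactly as computed in the timing paragraph of the previous proof, hence polynomial in the input size. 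Concatenating the canonical basis of $B$ with that of $A$ and sorting the orders so each divides the next yields a canonical basis of $G=A\times B$, which is precisely the output required by {\sf CanonicalBasis}, equivalently {\sf 1-AbelRecog}.

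The only randomness enters through the construction of a generator of the cyclic square-free factor $B$, so the overall algorithm is Las Vegas: it never returns a wrong answer, and it returns within the claimed time bound with probability at least $1-\varepsilon$ after $O(\log(1/\varepsilon))$ independent sampling rounds. I expect the main point requiring care — rather than a genuine obstacle — to be the justification that the factorization oracle really does let us bypass the number-theoretic hardness flagged by Iliopoulos: the oracle gives us the exponents $e_i$ and hence the targets $p_i^{e_i}$ and $b$ explicitly, so no square-freeness test and no hidden factorization is performed inside the algorithm, and every remaining step is either the deterministic Theorem \ref{thm:KP} machinery or the cheap randomized search for a cyclic generator.
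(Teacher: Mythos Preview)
Your proposal is correct and follows essentially the same approach as the paper: split $G=A\times B$ along the coprime factorization $n=ab$, handle the small-prime factor $A$ deterministically via Theorem~\ref{thm:KP}, and locate a cyclic generator of the square-free factor $B$ by Las Vegas random sampling against the known prime factorization of $b$. The only imprecision is that ``sorting'' the concatenated bases does not literally yield a canonical basis (since $d_s$ and $b$ are coprime rather than comparable under divisibility); the paper instead merges the top invariant factor with $b$, sending $(x_1,\dots,x_s)\mapsto (x_1,\dots,x_s^b)\alpha\cdot g^{x_s^{d_s}}$ to obtain invariants $d_1\mid\cdots\mid d_{s-1}\mid d_s b$ --- the routine CRT step you presumably intended.
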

\begin{proof}
  We use the notation of the proof of Theorem \ref{thm:iso-abelian-BB}. Assuming a means to factor integers, we can improve our previous isomorphism test in two ways.  First, we can  verify that $b$ is square-free -- a task which is in general not known 
without factorizing $b$, see \cite{Adleman}.  Second, we can select random elements of 
$B=\langle s^a:s\in S\rangle$ and test their order until we find an element $g$ of order 
$b$, and thus a generator for $B$; (this is the part of the algorithm which makes it Las Vegas). 
Therefore a one-way isomorphism $\mathbb{Z}/b\to B$ is defined by
$\code{def } f(m:\code{Int}):\code{Group[A]}:\equiv g^m$.  Given a one-way isomorphism 
$\alpha:\mathbb{Z}/d_1\times\cdots\times \mathbb{Z}/d_s\to A$, we extend this to
$\gamma:\mathbb{Z}/d_1\times\cdots\times \mathbb{Z}/(d_s b)\to G$ by
\[\code{def }\gamma((x_1,\dots,x_s):\code{List[Int]}):\code{Group[A]}:
	\equiv (x_1,\dots,x_s^b)\alpha\cdot g^{(x_s^{d_s})}.\]
Recall our type $\code{Hom}_{\code{Group}}\code{[List[Int],A]}$ is a pair of a function 
and a certificate that this function is a homomorphism.   To demonstrate how the proof of 
a homomorphism property can be provided as part of the return of the above homomorphism, 
we note that \[(x_1,\ldots,x_s)=\sum\nolimits_{i=1}^s x_i (0,\ldots,\overset{i}{1},\ldots,0)\] 
can be considered as  an SLP in the generators satisfying the relations.  Therefore the 
certificate provides the relations of the group and the assignment of generators, such that the homomorphism property follows from von Dyck's Theorem  \cite{rob}*{2.2.1}.\footnote{Certificates 
of homomorphism in this work are all given by von Dyck's Theorem, so going forward we will 
suppress discussion of this step.  This is, however, a crucial ingredient in making computations 
with checkable proofs.}
\end{proof}

\subsection{Membership and presentations}\label{secMS}
Later we shall need not only isomorphisms, but also presentations and membership tests.
These will be built upon what we can learn about abelian groups.  So we pause to 
inspect the implication of the above isomorphism tests.  

\begin{definition}\label{def:member}
Fix a group $G:\code{Group[A]}$ with generating set $S$.
A {\em constructive membership test} for $G$ is a function
$\Upsilon:\code{A}\dashrightarrow\code{SLP[A]}$ such that for every $x:\code{A}$, 
if  $x\in G$, then $x\equiv (S)\Upsilon_x$, where we abbreviate $\Upsilon_x=(x)\Upsilon$.
\end{definition}

Note that if $\Upsilon_x:\code{SLP[A]}$, instead
of \code{Nothing}, then $(S)\Upsilon_x$ is a word in the generators $S$, hence $(S)\Upsilon_x\in G$ lies indeed in $G$.  Thus, when
$(S)\Upsilon_x\equiv x$ holds, our interpretation of group
elements implies that both $(S)\Upsilon_x$ and $x$ are representatives of the same
equivalence class in $G$, in particular, $x\equiv (S)\Upsilon_x$ implies $x\in G$.

\begin{proposition}\label{prop:abelian-member}
Given a two-way isomorphism from $\mathbb{Z}/d_1\times\cdots \times \mathbb{Z}/d_s$ to 
$G:\code{Group[A]}$, there is a constructive membership test for $G$.
\end{proposition}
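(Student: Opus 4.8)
The plan is to use the two-way isomorphism $\alpha\colon\mathbb{Z}/d_1\times\cdots\times\mathbb{Z}/d_s\to G$ directly: given $x:\code{A}$, the natural candidate for $\Upsilon_x$ is obtained by applying the inverse map. First I would compute $(f_1,\dots,f_s):\equiv(x)\alpha^{-1}:\code{List[Int]}$, which is defined whenever $x\in G$, and is \code{Nothing} otherwise since $\alpha^{-1}$ is a partial function whose domain is (the equivalence classes represented by) $G$. From this tuple I would build the straight-line program $\Upsilon_x:\code{SLP[A]}$ that, using only $\cdot$, ${}^{-1}$, and $1$, encodes the word $g_1^{f_1}\cdots g_s^{f_s}$ where $g_i=(0,\dots,\overset{i}{1},\dots,0)\alpha$ are the images of the standard generators; concretely, since $\alpha$ itself is already expressed via a generating set of $G$ and a certificate of the homomorphism property, the SLP is just the pull-back under $\code{SLP}(\alpha)$ of the SLP computing $(f_1,\dots,f_s)$ in $\mathbb{Z}/d_1\times\cdots\times\mathbb{Z}/d_s$.

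Next I would verify the defining property of a constructive membership test, namely $x\equiv(S)\Upsilon_x$ whenever $x\in G$. This is where the \emph{two-way} hypothesis is used in an essential way. If $x\in G$, then by definition $x$ is (equivalent to) $(S)\sigma$ for some straight-line program $\sigma$, so $x$ lies in the domain of $\alpha^{-1}$; then $(S)\Upsilon_x\equiv ((x)\alpha^{-1})\alpha\equiv x$ because $\alpha$ and $\alpha^{-1}$ are mutually inverse on $G$. Conversely, if $\Upsilon_x:\code{SLP[A]}$ (rather than \code{Nothing}), then $(S)\Upsilon_x$ is by construction a word in the generators $S$, hence an element of $G$, and the check $(S)\Upsilon_x\equiv x$ certifies $x\in G$; this matches exactly the interpretation of group elements discussed after Definition~\ref{def:member}. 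The homomorphism certificate accompanying $\alpha$ (via von Dyck's Theorem, as in the proof of Corollary~\ref{corIF1}) guarantees that the SLP $\Upsilon_x$ genuinely represents the claimed element, so no separate proof obligation is incurred.

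Finally I would remark on efficiency: the construction calls $\alpha^{-1}$ once and then a fixed number of group operations determined by $s$ and the bit-lengths of the $d_i$, so if $\alpha$ and $\alpha^{-1}$ have polynomial-time evaluation — which is part of the hypothesis in {\sf 2-AbelRecog} — then $\Upsilon$ has polynomial-time evaluation as well, and in fact $\Upsilon_x$ is a straight-line program of polynomial length. The main (and really only) obstacle is conceptual rather than computational: one must be careful that $\Upsilon$ returns \code{Nothing} precisely when $x\notin G$ and returns a valid SLP otherwise, which is exactly what the partial-function domain of $\alpha^{-1}$ delivers; there is no difficulty beyond unwinding the type-theoretic bookkeeping, so the proof is short.
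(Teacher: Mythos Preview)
Your approach is essentially the paper's: apply $\alpha^{-1}$, then $\alpha$, and read off an SLP. But you assert that $(x)\alpha^{-1}$ ``is \code{Nothing} otherwise since $\alpha^{-1}$ is a partial function whose domain is $G$'', and later that ``the partial-function domain of $\alpha^{-1}$ delivers'' exactly this dichotomy. That claim is not justified by the type: $\alpha^{-1}:\code{Hom}_{\code{Group}}\code{[A,List[Int]]}$ is a partial function guaranteed only to invert $\alpha$ on elements of $G$; on an arbitrary $x:\code{A}$ with $x\notin G$ it may perfectly well return some tuple rather than \code{Nothing}. The paper is careful about precisely this point: its $\Upsilon$ explicitly tests whether $x\equiv (x\alpha^{-1})\alpha$ and returns \code{Nothing} if that fails, and then argues case by case (including the case where $\alpha^{-1}$ did \emph{not} return \code{Nothing} on $x\notin G$) why this test really separates $x\in G$ from $x\notin G$.

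The gap is not fatal to the proposition itself. Definition~\ref{def:member} requires only the forward implication $x\in G\Rightarrow x\equiv(S)\Upsilon_x$, which you prove correctly; and the external check $(S)\Upsilon_x\equiv x$ you mention in your middle paragraph would still decide membership even when $\alpha^{-1}$ returns garbage outside $G$, since $(S)\Upsilon_x$ always lies in $G$. So your construction is a valid constructive membership test --- you should simply drop the unjustified claim about the domain of $\alpha^{-1}$ or, as the paper does, fold the verification $x\equiv(x\alpha^{-1})\alpha$ into the definition of $\Upsilon$.
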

\begin{proof} 
Fix a two-way isomorphism 
$\alpha:\code{Hom}_{\code{Group}}\code{[List[Int],A]}$ 
describing an isomorphism from $\mathbb{Z}/{d_1}\times\cdots \times \mathbb{Z}/{d_s}$ to $G$, and let
$\alpha^{-1}:\code{Hom}_{\code{Group}}\code{[A,List[Int]]}$ be its inverse. 
Recall that \code{Nothing} is a permitted output of 
$\alpha^{-1}$ and $\alpha$, see Section~\ref{sec:general-inputs}, so we  define
\[\code{def } \Upsilon(x:\code{A}^?):\code{A}^? :\equiv  
	\text{if } x\equiv ((x\alpha^{-1})\alpha)\text{ then }(x\alpha^{-1})\alpha;\text{ else }\code{Nothing}.\]
Now to see this works let us suppose $x:\code{A}$ and abbreviate $\Upsilon_x=(x)\Upsilon$. 
If $x\equiv (x\alpha^{-1})\alpha=\Upsilon_x$, then $x\in G$ since $\alpha$ is an isomorphism 
onto $G$. Now suppose instead that $x\not\equiv \Upsilon_x$.
This can happen if $(x\alpha^{-1})\alpha$ is \code{Nothing}, or because 
$(x\alpha^{-1})\alpha=x':\code{A}$, but $x\not\equiv x'$.  In the first case observe
that every term $y:\code{List[Int]}$ is in some equivalence class of the group
$\mathbb{Z}/d_1\times\cdots\times \mathbb{Z}/d_s$, and therefore the only input
to $\alpha$ that has an output of \code{Nothing} is \code{Nothing}.  In particular, if
$(x\alpha^{-1})\alpha$ is \code{Nothing}, then $x\alpha^{-1}$ is \code{Nothing}, which
excludes $x$ from the domain of $\alpha^{-1}$;  by assumption, $\alpha^{-1}$ is 
an isomorphism on $G$, and so its domain includes $G$: it follows that $x\notin G$.
In the remaining case, $x'=\Upsilon_x=(x\alpha^{-1})\alpha\not\equiv x$, but $x'\in G$ 
since it is in the image of $\alpha$.   Hence, $(x'\alpha^{-1})\alpha =x \not\equiv (x\alpha^{-1})\alpha=x'$, 
but since $x'\in G$ and $\alpha$ and $\alpha'$ are isomorphisms on $G$, this proves the negation. 
Thus, if $x\not\equiv \Upsilon_x$, then $x\notin G$.

Lastly, if $\Upsilon_x$ is not \code{Nothing}, then we can consider $\Upsilon_x:\code{SLP[A]}$, 
since in this case $\Upsilon_x=(x\alpha^{-1})\alpha$, where  $x\alpha^{-1}=(x_1,\ldots,x_s):\code{List[Int]}$ 
can be interpreted as an SLP describing an element in the group $\mathbb{Z}/{d_1}\times\cdots \times \mathbb{Z}/{d_s}$; 
by construction, this SLP also describes $x$ in $G$. 
\end{proof}

Next, we describe Luks' \emph{constructive presentations} \cite{Luks:mat}.

\begin{definition}[(\cite{Luks:mat}*{Section~4.2})]\label{def:pres}
Let $G$ be a group and $N\lhd G$. A {\em constructive presentation} of a group $G/N$ 
is a free group $F_X$ on a set $X$, a homomorphism $\phi\colon F_X\to G$, a function 
$\psi\colon G\to F_X$, and a set $R\subset F_X$ such that
\begin{items}
\item[(i)] for every $g\in G$, $g^{-1}(g\psi\phi)\in N$;
\item[(ii)] $N\phi^{-1}=\langle R^{F_X}\rangle$, the normal closure of $\langle R\rangle$ in $F_X$.
\end{items}
\end{definition}
To offer some perspective on this definition, when the objects above are translated
into types with which we wish to compute, then the meaning is as follows. First,
$\langle X\mid R\rangle$ is a generator-relator presentation of the group 
$G/N$, see \cite{Luks:mat}*{Lemma 4.1}.  To make $\phi$, 
begin with  $\phi_0:\code{Character}\dashrightarrow\code{A}$ 
that assigns the generators $X$ of $F_X:\code{Group[SLP[Character]]}$ 
to the image $S\subset G:\code{Group[A]}$; now $\phi$ can be implemented by
applying the SLP functor $\Phi=\code{SLP}(\phi_0):\code{Hom}_{\code{Group}}\code{[SLP[Character],A]}$, see  Section~\ref{sec:general-inputs}.  
The interesting part is $\psi$, which can be implemented by $\Psi:\code{A}\dashrightarrow\code{SLP[Character]}$; this is in general not a homomorphism,
but serves to write terms $g:\code{A}$ as the image of an SLP $\sigma\in F_X$ 
such that $g\equiv g\psi\phi\bmod{N}$, or as \code{Nothing} if it is determined that $g\notin G$.   

\begin{proposition}\label{prop:abelian-pres}
Given a two-way isomorphism from $\mathbb{Z}/d_1\times\cdots \times \mathbb{Z}/d_s$ to 
$G:\code{Group[A]}$, there is a constructive presentation for $G$.
\end{proposition}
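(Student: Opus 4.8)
The plan is to apply Definition~\ref{def:pres} with $N=1$, building the four-tuple $(F_X,\phi,\psi,R)$ directly from the given two-way isomorphism $\alpha\colon\mathbb{Z}/d_1\times\cdots\times\mathbb{Z}/d_s\to G$ and its inverse $\alpha^{-1}$. Let $X=\{x_1,\ldots,x_s\}$ and let $F_X:\code{Group[SLP[Character]]}$ be the free group on $X$; set
\[
R=\{x_i^{d_i}:1\leq i\leq s\}\cup\{[x_i,x_j]:1\leq i<j\leq s\},
\]
so that $\langle X\mid R\rangle$ is the standard presentation of $\mathbb{Z}/d_1\times\cdots\times\mathbb{Z}/d_s$, by von Dyck's Theorem \cite{rob}*{2.2.1}. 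Let $e_i:\code{List[Int]}$ denote the $i$-th standard basis vector, define $\phi_0:\code{Character}\dashrightarrow\code{A}$ by $x_i\mapsto e_i\alpha$, and put $\phi:\equiv\code{SLP}(\phi_0):\code{Hom}_{\code{Group}}\code{[SLP[Character],A]}$; this is a homomorphism with polynomial-time evaluation, since $\alpha$ has one and an SLP is evaluated with one group operation per step.

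For $\psi$ I would mimic the constructive membership test of Proposition~\ref{prop:abelian-member}, but landing in $F_X$ rather than $\code{SLP[A]}$. Given $g:\code{A}$, compute $g\alpha^{-1}$; if it is \code{Nothing}, return \code{Nothing}; otherwise write $g\alpha^{-1}=(f_1,\ldots,f_s):\code{List[Int]}$, form the straight-line program $w:\equiv x_1^{f_1}\cdots x_s^{f_s}:\code{SLP[Character]}$ (each $x_i^{f_i}$ encoded by repeated squaring, so $w$ has size polynomial in $\log n$), and return $w$ if $g\equiv w\phi$ and \code{Nothing} otherwise. Evaluating $\psi$ costs only polynomially many group operations, as $\alpha^{-1}$ and $\phi$ are polynomial-time. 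The final equality test is a self-consistency check that forces $\psi$ to output \code{Nothing} precisely on $g\notin G$: if $g\in G$ then $g\equiv w\phi$ by the computation verifying clause (i) below, so a failed test forces $g\notin G$, while a \code{Nothing} value of $g\alpha^{-1}$ means $g$ lies outside the domain of $\alpha^{-1}$, hence outside $G$.

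It remains to check the two clauses of Definition~\ref{def:pres} with $N=1$. For (i): if $g\in G$, then $g\alpha^{-1}=(f_1,\ldots,f_s)$ is the true $\alpha$-preimage, so $w\phi=(e_1\alpha)^{f_1}\cdots(e_s\alpha)^{f_s}=(f_1,\ldots,f_s)\alpha=g$; the test passes and $g^{-1}(g\psi\phi)=1\in N$. For (ii): $\phi$ factors through the canonical surjection $\nu\colon F_X\to F_X/\langle R^{F_X}\rangle$ followed by the isomorphism $F_X/\langle R^{F_X}\rangle\cong\mathbb{Z}/d_1\times\cdots\times\mathbb{Z}/d_s$ (von Dyck) and then $\alpha$; as the latter two are injective, $N\phi^{-1}=\ker\phi=\ker\nu=\langle R^{F_X}\rangle$. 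The routine parts are von Dyck's Theorem, the homomorphism certificate for $\phi$ (again furnished by von Dyck, as noted after Corollary~\ref{corIF1}), and the size and timing bookkeeping; the one point needing care is that the partial function $\alpha^{-1}$ is only guaranteed to have domain \emph{containing} $G$, and this is absorbed by the equality test $g\equiv w\phi$ exactly as in Proposition~\ref{prop:abelian-member}. I expect that (minor) point to be the only real obstacle.
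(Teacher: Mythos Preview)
Your proposal is correct and follows essentially the same approach as the paper: the same presentation $\langle x_1,\ldots,x_s\mid x_i^{d_i},[x_i,x_j]\rangle$ with $N=1$, the same $\phi=\code{SLP}(\phi_0)$ sending $x_i\mapsto e_i\alpha$, and the same $\psi$ built from $\alpha^{-1}$ together with the self-consistency test $g\equiv (g\alpha^{-1})\alpha$ (your $g\equiv w\phi$) borrowed from Proposition~\ref{prop:abelian-member}. If anything, you are slightly more thorough in explicitly verifying clauses (i) and (ii) of Definition~\ref{def:pres} and in noting the size/timing bookkeeping.
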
 
\begin{proof}
Fix a two-way isomorphism 
$\alpha:\code{Hom}_{\code{Group}}\code{[List[Int],A]}$ 
describing an isomorphism from $\mathbb{Z}/{d_1}\times\cdots \times \mathbb{Z}/{d_s}$ to $G$, and let
$\alpha^{-1}:\code{Hom}_{\code{Group}}\code{[A,List[Int]]}$ be its inverse. A presentation for $G$ is 
$\langle x_1,\ldots,x_s \mid x_1^{d_1},\ldots,x_s^{d_s},[x_i,x_j]\text{ for all $i,j$}\rangle$. We now make it constructive; note that the normal subgroup is $N=1$. For $i=1,\ldots,s$ define $\phi_0(x_i):\equiv (0,\ldots,1,\ldots,0)\alpha$ with $1$ in position $i$, and then let $\phi=\code{SLP}(\phi_0)$, describing a map $F_X\to G$. As mentioned above,  $\phi$ can be certified  as a homomorphism by von Dyck's theorem  \cite{rob}*{2.2.1}. Next, we define
\[\begin{array}{lcl}
\code{def }\psi(x:\code{A}^?):\code{SLP[Character]}^? &:\equiv & \text{if }x\alpha^{-1}=(k_1,\ldots,k_s):\code{List[Int]}\text{ and }x\equiv x\alpha^{-1}\alpha\\&&\text{then } x_1^{k_1}\cdots x_s^{k_s}:\code{SLP[Character]}\text{ else }\code{Nothing}
\end{array}\]
The logic here is similar to that of the proof of Proposition~\ref{prop:abelian-member}, in that
we use $x\equiv x\alpha^{-1}\alpha$ to determine that inputs lie in $G$, and when they do
we extract an appropriate SLP.
\end{proof}

\section{Isomorphism testing of meta-cyclic  groups}\label{secBBmc}

In proving Theorem~\ref{thm:iso-abelian-BB} we retained deterministic
and polynomial time steps, because we avoided dealing with the cyclic factor of large primes.  
In Theorem~\ref{thm:iso-meta-cyclic-BB}, for meta-cyclic groups, this is no longer possible.
We are forced to either give up on determinism or on efficiency.  
Our approach will be to make a probabilistic algorithm. 

\medskip

\begin{center}
  \begin{tabular}{ll}
{\bf Note:\hspace*{-0.1cm}}&Throughout this section we assume we know the prime factors of $n=|G|$.
  \end{tabular} 
\end{center}

\subsection{Isolating large primes}

For a number $c$ define
\begin{align*}
	\pi(> c) & = \{n : \text{every prime divisor $p$ of $n$ satisfies $p> c$}\}\\
	\pi(\leq c) & =  \{n : \text{every prime divisor $p$ of $n$ satisfies $p\leq c$}\}.
\end{align*}
We prove in Theorem \ref{thm:mostly-split}  that for almost all orders $n$, a solvable 
group of size $n$ has a normal Hall $\pi(>\log\log n)$-subgroup. We start with a few 
preliminary results.

\begin{definition}\label{defKBfree}
  An integer $n$ is {\em $(k,c)$-free} if $p\leq c$ for every prime $p$ with $p^k\mid n$; the integer $n$  is {\em $c$-separable} if for all prime divisors $p>c$ and $q$, we have that $q^k\mid n$ and $q^k\equiv 1\bmod{p}$ imply $k=0$. 
\end{definition}
 
The following result is due to Erd\H os \& Palfy; it follows directly from  \cite{ErdosPalfy}*{Lemmas 3.5 \& 3.6} 

\begin{lemma}\label{lem:Count-2}
The set $\mathcal{D}' = \{n: n\in\mathbb{N}\textnormal{ is $(2,\log \log n)$-free and $\log\log n$-separable}\}$ is dense.
\end{lemma}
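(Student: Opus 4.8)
The statement asserts density of the set $\mathcal{D}'$ of integers that are simultaneously $(2,\log\log n)$-free and $\log\log n$-separable, and it explicitly claims this follows from Lemmas 3.5 and 3.6 of Erdős--Pálfy. So the proof is essentially a bookkeeping argument: unpack the two definitions, match each to the corresponding Erdős--Pálfy lemma, and conclude that the intersection of two dense sets is dense.

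First I would observe that an integer $n$ is $(2,\log\log n)$-free precisely when every prime $p$ with $p^2\mid n$ satisfies $p\leq\log\log n$; equivalently, no prime $p>\log\log n$ has $p^2\mid n$. This is exactly the conclusion of \cite{ErdosPalfy}*{Lemma~3.5} (the same fact already invoked in the proof of Lemma~\ref{lem:Count-1}), so the set $\mathcal{D}'_1$ of $(2,\log\log n)$-free integers is dense. Next I would unpack $\log\log n$-separability: $n$ is $\log\log n$-separable when, for every prime divisor $p>\log\log n$ and every prime $q$, if $q^k\mid n$ and $q^k\equiv 1\pmod p$ then $k=0$ — i.e.\ no nontrivial prime power $q^k$ dividing $n$ is congruent to $1$ modulo a large prime divisor of $n$. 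This is the content of \cite{ErdosPalfy}*{Lemma~3.6}, which shows the set $\mathcal{D}'_2$ of such integers is dense.

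Finally, since a finite (indeed countable) intersection of dense sets of integers need not in general be dense, I would be slightly careful here: the right statement is that if $|\mathcal{D}'_i\cap\{1,\dots,x\}|/x\to 1$ for $i=1,2$, then the complement of $\mathcal{D}'_1\cap\mathcal{D}'_2$ has upper density bounded by the sum of the two complement densities, hence $|\mathcal{D}'_1\cap\mathcal{D}'_2\cap\{1,\dots,x\}|/x\to 1$. Thus $\mathcal{D}'=\mathcal{D}'_1\cap\mathcal{D}'_2$ is dense. The main (and only real) obstacle is purely expository: making sure the quantifier structure in Definition~\ref{defKBfree} lines up verbatim with the way Erdős--Pálfy phrase their lemmas, in particular that their ``large prime'' threshold is indeed $\log\log n$ and that their Lemma~3.6 is stated for all prime powers $q^k\mid n$ and not merely for $q^k\,\|\,n$; assuming the citation is accurate, no genuine number-theoretic work remains to be done beyond the union bound on complements.
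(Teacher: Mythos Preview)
Your proposal is correct and matches the paper's approach exactly: the paper does not give a proof but simply states that the result ``follows directly from \cite{ErdosPalfy}*{Lemmas~3.5~\&~3.6}'', and your write-up is precisely the unpacking of that citation together with the (correct) union-bound observation that the intersection of two sets of density~$1$ again has density~$1$. Nothing further is needed.
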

 
  Let $\mathcal{D}$ be the set of pseudo-square-free integers as defined in Definition \ref{def:eventually-sq-free}) and let $\mathcal{D}'$ be the set of integers defined in Lemma~\ref{lem:Count-2}.
 
\begin{lemma}\label{lem:Count-3}
The set $\hat{\mathcal{D}} =\mathcal{D}\cap\mathcal{D}'$ is dense.
\end{lemma} 
\begin{proof}
It follows from Lemmas~\ref{lem:Count-1} \& \ref{lem:Count-2} that $\mathcal{D}$ and $\mathcal{D}'$ are dense. Clearly, $\mathcal{D}\cup\mathcal{D}'$ is dense, and an inclusion-exclusion argument proves the claim.
\end{proof}

\begin{lemma}\label{lem:sep-normal}
Let $G$ be a solvable group of $c$-separable order $n$. If $p>c$, then the Sylow $p$-subgroup of $G$ is normal in $G$.
\end{lemma}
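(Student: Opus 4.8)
The statement is equivalent to showing that $P$ is the unique Sylow $p$-subgroup of $G$, i.e.\ that $n_p:=|G:N_G(P)|=1$ for $P\in\mathrm{Syl}_p(G)$. We may assume $p\mid n$, since otherwise $P=1$. As $n_p$ divides $|G|=n$, every prime power dividing $n_p$ divides $n$. The slickest route is to invoke P.\ Hall's classical theorem that in a solvable group $n_p$ is a product of prime powers each $\equiv 1\bmod p$: every such prime power divides $n$ and is $\equiv 1\bmod p$ with $p>c$ a prime divisor of $n$, so $c$-separability (Definition~\ref{defKBfree}) forces it to equal $1$, whence $n_p=1$. Should one prefer not to quote that theorem, here is a self-contained induction.

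I would induct on $|G|$, using that every divisor of a $c$-separable integer is again $c$-separable, so that subgroups and quotients of $G$ have $c$-separable order and the inductive hypothesis applies to them (and one may always assume $p$ divides the order under consideration). Choose a minimal normal subgroup $N$ of $G$; solvability makes $N$ an elementary abelian $q$-group for some prime $q$. If $q=p$, then $N$ is contained in every Sylow $p$-subgroup of $G$; if $P=G$ we are done, and otherwise $P/N$ is a Sylow $p$-subgroup of $G/N$, so either $p\nmid|G/N|$ and $P=N\lhd G$, or the inductive hypothesis gives $P/N\lhd G/N$ and hence $P\lhd G$.

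Now suppose $q\neq p$. Then $\bar P=PN/N$ is a Sylow $p$-subgroup of $\bar G=G/N$, and the inductive hypothesis yields $\bar P\lhd\bar G$, i.e.\ $PN\lhd G$. If $PN<G$, then $PN$ is a proper solvable subgroup of $c$-separable order in which $P$ is a Sylow $p$-subgroup, so by induction $P\lhd PN$; being then the unique Sylow $p$-subgroup of $PN$, $P$ is characteristic in $PN$ and hence normal in $G$. Thus it remains to treat the case $PN=G$, i.e.\ $G=P\ltimes N$ with $N\cap P=1$ --- the one place where a genuine computation is needed.

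If $x\in N$ normalises $P$, then for each $y\in P$ the commutator $[y,x]$ lies in $P$ (since $x^{-1}yx\in P$) and in $N$ (since $N\lhd G$), hence $[y,x]=1$; therefore $N\cap N_G(P)=C_N(P)$, and by the modular law $N_G(P)=C_N(P)P$, so that $n_p=|G:N_G(P)|=|N:C_N(P)|$ is a power of $q$. Since $n_p\equiv 1\bmod p$, $n_p\mid n$, and $p>c$ divides $n$, $c$-separability forces $n_p=1$, so $P\lhd G$. The argument is routine; the only points requiring care are verifying that the prime powers whose residue modulo $p$ one controls actually divide $n$ (so that $c$-separability applies), and, in the inductive version, isolating the single configuration above in which $n_p$ is visibly a prime power.
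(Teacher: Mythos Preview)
Your argument is correct (both the appeal to Hall's enumeration theorem and the self-contained induction), but it is genuinely different from the paper's proof. The paper argues directly via Hall subgroups and a Sylow basis: for each prime $q\neq p$ it takes a Hall $\{p,q\}$-subgroup $H$ of order $p^eq^f$, observes that the number of Sylow $p$-subgroups of $H$ is a power of $q$ dividing $n$ and $\equiv 1\bmod p$, hence equals $1$ by $c$-separability; then, fixing a Sylow basis $P_1,\dots,P_s$ with $P=P_u$ the Sylow $p$-subgroup, each $PP_j$ is such a Hall $\{p,p_j\}$-subgroup, so $P\lhd PP_j$, and writing $g=g_1\cdots g_s$ with $g_j\in P_j$ gives $gP=Pg$.

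Your first route packages the same prime-by-prime counting into one citation of Hall's theorem, which is shorter but less self-contained. Your inductive route avoids both Hall's theorem and Sylow bases, using only the structure of a minimal normal subgroup; it is more elementary and arguably more transparent, at the cost of a longer case analysis. The paper's Sylow-basis argument sits in between: it is self-contained modulo the existence of Sylow bases in solvable groups, and isolates exactly the same ``$q$-power $\equiv 1\bmod p$'' obstruction that your base case $G=P\ltimes N$ exhibits.
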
 
\begin{proof}
  Let $q\ne p$ be a prime dividing $n$, and let $H$ be a Hall $\{p,q\}$-subgroup of $G$ of order $p^eq^f$; see \cite{handbook}*{Section 8.10.1}. The Sylow Theorem \cite{handbook}*{Theorem 2.19} shows that the  number $h_p$ of Sylow $p$-subgroups of $H$ divides $q^f$ (and hence $n$) and is congruent to $1$ modulo $p$.  Since $n$ is $c$-separable, it follows that $h_p=1$, so $H$ has a normal Sylow $p$-subgroup. Now fix a Sylow basis $\mathcal{P}=\{P_1,\dots,P_s\}$ for $G$, that is, a set of
  Sylow subgroups, one for each prime dividing $n$, such that $P_iP_j=P_jP_i$ for all $i$ and $j$; see \cite{rob}*{Section 9.2}. Let $P=P_u$ be the Sylow $p$-subgroup for $G$ in $\mathcal{P}$.
Since $G=P_1\cdots P_s$, every $g\in G$ can be written as $g=g_1\ldots g_s$ with each $g_j\in P_j$.  Since $PP_j=P_j P$, the group $PP_j$ is a 
Hall $\{p,p_j\}$-subgroup of $G$. As shown above, $P$ is normal in $PP_j$; in particular, $g_jP=Pg_j$ for every $j$. Consequently, $gP=g_1\ldots g_s P = Pg_1\ldots g_s=Pg$, which proves that $P$ is normal in $G$.
\end{proof}

\begin{theorem}\label{thm:mostly-split} 
Every solvable group $G$ of order $n\in\hat{\mathcal{D}}$ has a normal Hall $\pi(> \log\log n)$-subgroup $B$ and a complementary Hall $\pi(\leq \log\log n)$-subgroup~$K$.
\end{theorem}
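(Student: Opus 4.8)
The plan is to combine the two halves of the hypothesis $n\in\hat{\mathcal D}=\mathcal D\cap\mathcal D'$: the $\log\log n$-separability coming from $\mathcal D'$ gives us, via Lemma~\ref{lem:sep-normal}, that each large Sylow subgroup is normal; and the pseudo-square-free structure coming from $\mathcal D$ forces every large prime divisor to appear to the first power, so that the product of the large Sylow subgroups is itself a (cyclic-by-the-structure, but at least square-free-order) normal subgroup. Once that product $B$ is shown to be a normal Hall $\pi(>\log\log n)$-subgroup, the Schur--Zassenhaus theorem supplies a complement $K$, which is automatically a Hall $\pi(\leq\log\log n)$-subgroup since $|K|=n/|B|$ is the $\pi(\leq\log\log n)$-part of $n$.

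First I would write $c=\log\log n$ and, using that $n\in\mathcal D$ is pseudo-square-free, factor $n=p_1^{e_1}\cdots p_k^{e_k}\,b$ with $b$ square-free, every prime of $b$ exceeding $c$, and every $p_i\leq c$; in particular the set of prime divisors of $n$ exceeding $c$ is exactly the set of prime divisors of $b$, and each such prime divides $n$ to the first power only. Next, since $n\in\mathcal D'$ is $c$-separable, Lemma~\ref{lem:sep-normal} applies: for each prime $p>c$ dividing $n$, the Sylow $p$-subgroup $P_p$ of $G$ is normal in $G$, and by the previous sentence $|P_p|=p$. Set $B=\prod_{p\mid b}P_p$. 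Because each $P_p\lhd G$, the product $B$ is a normal subgroup of $G$, and by the coprimality of the $P_p$ it has order $\prod_{p\mid b}p=b$; thus $B$ is a normal Hall $\pi(>c)$-subgroup of $G$.

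Finally, $G/B$ has order $n/b=p_1^{e_1}\cdots p_k^{e_k}$, a $\pi(\leq c)$-number coprime to $|B|$. Since $G$ is solvable (indeed even without solvability, by Schur--Zassenhaus, as $\gcd(|B|,|G/B|)=1$), there is a complement $K\leq G$ with $G=B\rtimes K$, and $|K|=n/b$ shows $K$ is a Hall $\pi(\leq c)$-subgroup. This establishes the theorem. I do not expect a serious obstacle here: the only point that needs care is making sure that the two density conditions genuinely combine as stated — namely that ``large prime $\Rightarrow$ appears to first power'' (from $\mathcal D$) and ``large Sylow $\Rightarrow$ normal'' (from $\mathcal D'$ via Lemma~\ref{lem:sep-normal}) refer to the same threshold $\log\log n$, which they do by construction of $\hat{\mathcal D}$. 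Everything else is a routine application of Sylow theory and Schur--Zassenhaus.
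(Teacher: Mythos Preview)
Your proof is correct and follows essentially the same route as the paper: apply Lemma~\ref{lem:sep-normal} (using $c$-separability) to get each large Sylow subgroup normal, take their product as $B$, and invoke Schur--Zassenhaus for the complement $K$. One small remark: you invoke the pseudo-square-free condition from $\mathcal{D}$ to argue that each $|P_p|=p$ and hence that $B$ has order $b$, but this is not needed here --- the product of pairwise normal Sylow subgroups for distinct primes is automatically a normal Hall subgroup regardless of the exponents, so the paper's proof uses only the $c$-separability hypothesis from $\mathcal{D}'$ at this step.
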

\begin{proof}
Let $c=\log\log n$ and note that $n$ is $c$-separable by assumption. By Lemma~\ref{lem:sep-normal}, 
for every prime $p>c$ there is a normal Sylow $p$-subgroup $P_p$ of $G$; thus the 
product $B$ of all these normal Sylow subgroups is a normal Hall $\pi(>c)$-subgroup 
of $G$. The Schur-Zassenhaus Theorem \cite{rob}*{(9.1.2)} shows that there is a 
complementary Hall $\pi(>c)'$-subgroup $K$ to $B$. 
It follows from Lemma  \ref{lem:Count-3} that $\hat{\mathcal{D}}$ is dense.
\end{proof}

\subsection{Recognizing coprime meta-cyclic decompositions}
The following theorem is proved at the end of this section; we assume the notation 
of the theorem throughout this section.  We say a group $G$ is \emph{coprime meta-cyclic}
if $G=U\ltimes K$ for cyclic $U,K\leq G$ of coprime order. Recall the definition of $\hat{\mathcal{D}}$, 
see Lemma~\ref{lem:Count-3}.
 
\begin{theorem}\label{thm:meta-cyclic-recog}
	There is a Las Vegas polynomial-time algorithm that
	given a solvable group $G:\code{Group[A]}$ with known factored order $n\in \hat{\mathcal{D}}$,
	decides if $G$ is coprime meta-cyclic, and, if so, returns $U,K\leq G$ 
	such that $G=U\ltimes K$, where $K$ is the product of all normal Sylow subgroups of $G$, 
	a two-way isomorphism $\alpha\colon \mathbb{Z}/c\to U$, and a 
	one-way isomorphism $\beta\colon \mathbb{Z}/d\to K$.
\end{theorem}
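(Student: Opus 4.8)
Write $\ell=\log\log n$ and, from the given factorization, put $n=p_1^{e_1}\cdots p_k^{e_k}b$ as in Definition~\ref{def:eventually-sq-free}. Since $n\in\hat{\mathcal{D}}$ is $\ell$-separable and $(2,\ell)$-free, Lemma~\ref{lem:sep-normal} makes every Sylow $p$-subgroup of $G$ with $p>\ell$ normal and of order $p$; their product is the normal Hall $\pi(>\ell)$-subgroup $B$ of Theorem~\ref{thm:mostly-split}, and since these $C_p$'s commute pairwise, $B\cong\mathbb{Z}/b$. The pivotal observation is that, although discrete logarithm in $B$ is as hard as in the general case, \emph{membership} in $B$ is easy: every $\pi(>\ell)$-subgroup of $G$ lies in $B$ (its image in $G/B$ is a $\pi(>\ell)$-subgroup of a $\pi(\leq\ell)$-group), hence $x\in B$ iff $\mathrm{ord}(x)\mid b$, and element orders are computable in polynomial time from the known factorization of $n$. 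Consequently we may compute in $\bar G:=G/B$, which is solvable of order $a=n/b$ with every composition factor of order $\leq\ell$.

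For the decision I would use the classical description of groups all of whose Sylow subgroups are cyclic: these are exactly the $C_m\rtimes C_n$ with $\gcd(m,n)=1$, so $G$ is coprime meta-cyclic iff every Sylow subgroup of $G$ is cyclic. For $p>\ell$ this holds automatically, and for $p\mid a$ one has $\mathrm{Syl}_p(G)\cong\mathrm{Syl}_p(\bar G)$, since coprimality forces $\mathrm{Syl}_p(G)\cap B=1$, so $\mathrm{Syl}_p(G)$ embeds in $\bar G$ with full order. Thus I would compute the Sylow subgroups of $\bar G$ --- each of order at most $\log n$ --- by the standard Las~Vegas machinery for black-box solvable groups (cf.\ \cite{Seress}), whose only potentially costly ingredient, discrete logarithm in the cyclic composition factors, is here over groups of order $\leq\ell=\log\log n$; checking whether each is cyclic (enumerate it and look for an element of maximal order) decides whether $G$ is coprime meta-cyclic, and if not we stop.

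In the affirmative case I would produce the decomposition as follows. A generating set for $B$ is obtained, with high probability, from the $a$-th powers of $O(\log n)$ random elements of $G$ together with the $a$-th powers of $O(\log n)$ random commutators: a short case analysis of the image of $G$ in $\Aut(C_q)$ for each $q\mid b$ shows that for every such $q$ at least one of these elements avoids the (unique) index-$q$ subgroup of $B$, so the collection generates $B$. From a generating set of the cyclic group $B$ one reads off a generator $g_B$ prime by prime (take $z_q:=x^{b/q}$ for a generator $x$ with $q\mid\mathrm{ord}(x)$, then $g_B:=\prod_{q\mid b}z_q$). For each small prime $p=p_i$, lift a generator of $\mathrm{Syl}_p(\bar G)$ to $\tilde x\in G$ (so $p^{e_i}\mid\mathrm{ord}(\tilde x)$) and put $P_i:=\langle \tilde x^{\,\mathrm{ord}(\tilde x)/p^{e_i}}\rangle$, a cyclic Sylow $p$-subgroup of $G$; decide $P_i\lhd G$ by checking that each of the (polynomially many, after reduction) generators of $G$ conjugates a generator of $P_i$ into $P_i$, a group of size $\leq\log n$ that we enumerate. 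Let $K:=\langle B,\ P_i:\ P_i\lhd G\rangle$; as a product of pairwise-commuting, pairwise-coprime cyclic groups it is cyclic, equals the product of all normal Sylow subgroups of $G$, has order $d$, and has the explicit generator $g_K:=g_B\cdot\prod_{P_i\lhd G}t_i$, where $t_i$ generates $P_i$. Return $\beta\colon\mathbb{Z}/d\to K$, $m\mapsto g_K^m$; it is only one-way, because inverting it would solve discrete logarithm in the factor $B\cong\mathbb{Z}/b$. Finally $G/K$ is cyclic --- the cyclic normal subgroup occurring as the ``kernel'' of any coprime meta-cyclic decomposition of $G$ is a product of Sylow subgroups normal in $G$, hence contained in $K$ --- of order $c=n/d$, which divides $a$; since membership in $K$ is decided by $x\in K$ iff $\mathrm{ord}(x)\mid d$, we may compute in $G/K$, lift a generator $\bar u$ to $u\in G$, and set $U:=\langle u^{\,\mathrm{ord}(u)/c}\rangle$, a cyclic complement to $K$ of order $c$. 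Because every prime power dividing $c$ is $\leq\log n$, discrete logarithm in $U$ is polynomial (reduce by the Chinese Remainder Theorem to the prime-power parts and brute force), so $\alpha\colon\mathbb{Z}/c\to U$, $m\mapsto(\text{generator of }U)^m$, is two-way; and $G=U\ltimes K$ as required.

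The number-theoretic content --- density of $\hat{\mathcal{D}}$ and the normal Hall $\pi(>\ell)$ decomposition it forces --- is already available from Lemmas~\ref{lem:Count-1}--\ref{lem:Count-3}, Lemma~\ref{lem:sep-normal} and Theorem~\ref{thm:mostly-split}, so the remaining work is computational. I expect its hard part to be keeping every step Las~Vegas polynomial while never solving a discrete logarithm in the large-prime part: in particular (i) getting generators of $B$, where the proportion of ``large-prime-divisible'' elements of $G$ may be only about $1/q$, which is why one must mix random $a$-th powers with random commutators, and (ii) computing with $\bar G=G/B$ constructively in polynomial time, which works precisely because all of its composition factors have order at most $\log\log n$. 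The asymmetry of the output ($\alpha$ two-way, $\beta$ one-way) mirrors the asymmetry between $|U|$, all of whose prime powers are $\leq\log n$, and $|K|$, which is divisible by the large primes $b$.
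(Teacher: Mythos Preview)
Your proposal is correct and follows the same high-level architecture as the paper: recognise $B$ by the order test, work in $\bar G=G/B$ where every prime power is at most $\log n$, find the small Sylow subgroups and detect which are normal and cyclic, assemble $K$ and then a cyclic complement $U$. The substantive difference is in how you obtain \emph{generators} of $B$.

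The paper does not use random $a$-th powers and random commutators. Instead it first runs the derived-series recognition of $\bar G$ (Lemma~\ref{lem:small-top}) to obtain a \emph{constructive presentation} $\langle X\mid R\rangle$ of $G/B$, and then applies Lemma~\ref{lem:get-normal}: lift each $x\in X$ to $g_x\in G$ and evaluate each relator $w\in R$ at these lifts. The resulting elements $w(g_x)$ lie in $B$; their normal closure is $B$; and because $B$ is cyclic, each conjugate $\langle w(g_x)^g\rangle$ equals $\langle w(g_x)\rangle$, so the $w(g_x)$ already generate $B$ without taking a normal closure. From there one samples random elements of $B$ until a cyclic generator is found (certified via $g^{b/p}\neq 1$ for every prime $p\mid b$), exactly as you do. This relator trick is short, deterministic up to the Las~Vegas recognition of $\bar G$, and avoids the probabilistic dichotomy you sketch.

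Your alternative does work, but the ``short case analysis'' deserves to be written out. The key point is that the action of $H=G/B$ on each $C_q$ factors through an abelian quotient, so $[H,H]$ centralises $C_q$; hence the $C_q$-component of $[g_1,g_2]^a$ equals $a\cdot\big((\rho(h_2)-1)c_1+(1-\rho(h_1))c_2\big)$, which is nonzero with probability at least $(1-1/m^2)(1-1/q)\ge 3/8$ when the action has image of order $m\ge 2$, while in the trivial-action case $g^a$ projects to $c^a$ and is nonzero with probability $(q-1)/q$. A union bound over the $O(\log n)$ primes $q\mid b$ then gives your claim. Similarly, your appeal to ``standard Las~Vegas machinery'' for Sylow subgroups in $\bar G$ is fine, but the paper makes this self-contained via Lemma~\ref{lem:sylow-small}, recursing along the derived series already computed in Lemma~\ref{lem:small-top}. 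Everything else---membership in $K$ by order, the complement $U=\langle u^{|u|/c}\rangle$, and the two-way/one-way asymmetry between $\alpha$ and $\beta$---matches the paper.
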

To place this in context, consider the following diagram of short exact sequences:
\begin{align}\label{eq:split}
\xymatrix{ 
1 \ar[r] & {\mathbb{Z}/be} \ar[d]^{\beta}\ar[r] & \mathbb{Z}/c\ltimes_{\theta}{\mathbb{Z}/be}
	\ar[d]^{\gamma}\ar[r]
	& \mathbb{Z}/c \ar[d]^{\alpha}\ar[r] & 1\\
1 \ar[r] & B\times{E} \ar[r] & G\ar[r] & G/(B\times {E}) \ar[r] & 1
}
\end{align}
Theorem~\ref{thm:meta-cyclic-recog} constructs the homomorphisms $\alpha$ and $\beta$, but it does not describe $\theta$ nor $\gamma$
which are constructed later in Theorem~\ref{thm:deconj} and the proof of
Theorem~\ref{thm:iso-meta-cyclic-BB}.  In the group $K=B\times E\cong \mathbb{Z}/be$ 
given in the diagram, $B\cong \mathbb{Z}/b$ is the 
unique Hall $\pi(> \log \log n)$-subgroup and $E\cong \mathbb{Z}/e$ is a cyclic subgroup
of order coprime to $bc$. Note $d=be$ and $B$ must be cyclic since $n\in\hat{\mathcal{D}}$.
That $E$ and $G/(B\times E)$ are cyclic is determined by our algorithm.

Our process is in two steps.  We first recognize $B$
(see Lemma~\ref{lem:big-member}) and $G/B$ (see Lemma~\ref{lem:small-top}) by 
constructing presentations based  
on our abelian isomorphism test (Theorems~\ref{thm:iso-abelian-BB} \& \ref{thm:KP}).
The second stage is to determine whether {$G/B\cong \mathbb{Z}/c\ltimes \mathbb{Z}/e$
with $\gcd(c,e)=1$}, and use this to decide if $G\cong \mathbb{Z}/c\ltimes \mathbb{Z}/be$.
Having removed the presence of large primes in $|G:B|$, this second step can use Sylow 
subgroups to construct a canonical such decomposition (see Lemma~\ref{lem:max-split}).

In what follows we focus on groups of black-box type. We devise an algorithm that 
works through successive quotients of a group $G:\code{Group[A]}$.  While it would be 
standard in group theory to express this with a normal subgroup $N$, where  $G/N$ is 
the quotient, for computations we must expressly convert $G/N$ into a congruence such 
that $x\equiv y$ in $G/N$ if and only if  $x^{-1}y\in N$; cf.\ the definition 
of \code{Group[A]} in Section~\ref{sec:general-inputs}. For that it suffices to use a 
membership test, not to be confused with the stronger constructive membership tests of
Definition~\ref{def:member}.  

\begin{definition} 
Given a group $G:\code{Group[A]}$, a subgroup $N$ is \emph{recognized} if
there is a partial function $\Upsilon:\code{A}\dashrightarrow\code{Boolean}$ such that if 
$g\in G$, then  $g\in N$ if and only if $\Upsilon_g=1$ ($\code{True}$); again, 
we abbreviate $\Upsilon_g = (g)\Upsilon$.
\end{definition}

In this situation, $N$ is not necessarily a black-box group  as we need not to know generators.  

\begin{lemma}\label{lem:big-member} 
	Fix $G:\code{Group[A]}$ and $d:\code{Int}$. 
	If $G$ has a unique Hall $d$-subgroup $H$, then $H$ is recognizable.
	In particular, if $G$ is solvable of order dividing some $n\in \hat{\mathcal{D}}$,
	then $G$ has a  unique Hall $\pi(>\log\log n)$-subgroup and this group is recognizable.
\end{lemma}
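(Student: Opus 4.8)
The plan is to show that a unique Hall $d$-subgroup is automatically normal, to identify it intrinsically as the set of $\pi$-elements of $G$ (with $\pi$ the set of prime divisors of $d$), and then to recognize that set by a single exponentiation.

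First I would note that for any $g\in G$ the conjugate $H^g=g^{-1}Hg$ is again a subgroup whose order equals the $\pi$-part of $|G|$ and is coprime to its index, hence again a Hall $d$-subgroup; uniqueness forces $H^g=H$, so $H\trianglelefteq G$. Since $H$ is now a \emph{normal} Hall $\pi$-subgroup, it equals $\{g\in G:g\text{ is a }\pi\text{-element}\}$: every element of $H$ is a $\pi$-element because $|H|$ is a $\pi$-number, and conversely a $\pi$-element $x$ generates a $\pi$-subgroup whose image $\langle x\rangle H/H$ in the $\pi'$-group $G/H$ is trivial, so $x\in H$. Writing $e$ for the $\pi$-part of $|G|$ — a fixed positive integer — an element $g\in G$ is a $\pi$-element if and only if $g^e=1$, since the order of any $\pi$-element of $G$ divides $e$, while $g^e=1$ forces $|g|\mid e$, which is a $\pi$-number. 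Hence $\Upsilon_g:\equiv (g^e\equiv 1)$ recognizes $H$, proving the first assertion. When a factored $n$ with $|G|\mid n$ is on hand (as in Theorem~\ref{thm:meta-cyclic-recog}) one may instead take $e$ to be the $\pi$-part of $n$ — the same equivalence survives because $|g|\mid |G|\mid n$ — and then $\Upsilon_g$ costs $O(\log n)$ group operations by repeated squaring.

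For the \emph{in particular} clause, put $c=\log\log n$ and $\pi=\pi(>c)$. Because $|G|$ divides $n$ and every prime power dividing $|G|$ divides $n$, the hypothesis that $n$ is $c$-separable (built into $\hat{\mathcal{D}}$ via Definition~\ref{defKBfree} and Lemma~\ref{lem:Count-2}) passes to $|G|$; note that $c$ is held fixed at $\log\log n$, so this is \emph{not} an appeal to Theorem~\ref{thm:mostly-split}, which concerns groups of order exactly $n$. Thus Lemma~\ref{lem:sep-normal} applies verbatim to $G$, and every Sylow $p$-subgroup with $p>c$ is normal in $G$; their product $B$ is a normal Hall $\pi$-subgroup of $G$. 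A normal Hall $\pi$-subgroup is unique (if $H'$ is any Hall $\pi$-subgroup, then $BH'$ is a $\pi$-subgroup of order dividing $|B|$, forcing $H'=B$), so $B$ is the unique Hall $\pi(>\log\log n)$-subgroup and the first part finishes the proof. The only thing needing care — and not really an obstacle — is this bookkeeping around the fixed threshold $\log\log n$ as $|G|$ ranges over divisors of $n$.
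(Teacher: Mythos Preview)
Your proof is correct and follows essentially the same idea as the paper: recognize $H$ by the single exponentiation test $g\mapsto (g^{|H|}\equiv 1)$. Two small remarks. First, since the integer $d=|H|$ is \emph{given} by hypothesis, the paper simply tests $g^d=1$; your detour through $\pi$ and the $\pi$-part $e$ of $|G|$ is unnecessary (and in a black-box setting $|G|$ is not directly available), though of course $e=d$ so the test is identical. Second, your treatment of the ``in particular'' clause---observing that $c$-separability passes to divisors and then invoking Lemma~\ref{lem:sep-normal} directly---is actually more careful than the paper's citation of Theorem~\ref{thm:mostly-split}, which is stated only for groups of order exactly $n$.
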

\begin{proof}
	To describe the recognition test, suppose $g:\code{A}$.  Test if $g^d=1$ and return
	the result.  This works because we only need to guarantee the result for $g\in G$
	(that is we make no promise about the outcome when $g$ lies outside $G$;
	if $g\notin G$, then we can return anything including \code{Nothing}).
	In this case,  $g\in H$ implies $g^d=1$ since $|H|=d$.  Conversely, if $g^d=1$, 
	then $g$ lies in  some Hall $d$-subgroup of $G$, which by hypothesis is unique.

	For the case when $G$ is solvable of order dividing $n\in\hat{D}$, 
	we know that $G$ has a unique Hall $\pi(>\log\log n)$-subgroup $H$, see Theorem \ref{thm:mostly-split}; using Lemma~\ref{lemnab} we 
	can factor $n=ab$ and so determine the order $|H|=b$; thus we can apply the above algorithm.
\end{proof}

The ambiguity of knowing whether $g:\code{A}$ satisfies $g\in G$ when all we have are 
generators of $G$ could cause concern for users and distress for algorithm designers.  
However, we have already shown 
in Proposition~\ref{prop:abelian-member} that this is decidable for abelian groups;  we build 
on that technique. Note that we assume the prime factors of $|G|$ are known
and we will apply Proposition~\ref{prop:abelian-member} in situations where we can prove 
the complexity is polynomial time.

\begin{lemma}\label{lem:small-top}
	Let $n\in \hat{\mathcal{D}}$ with known factorization.  
	There is a randomised polynomial time (in $\log n$) algorithm that 
	given a $\pi(\leq \log\log n)$-group $G:\code{Group[A]}$ of order dividing $n$, 
	certifies that $G$ is solvable, returning a constructive presentation of $G$, 
	the derived series $G=N_1>\cdots >N_{\ell+1}=1$  with two-way 
	isomorphisms $\phi_i\colon \mathbb{Z}/d_{i1}\times\cdots \times \mathbb{Z}/d_{if_i}\to N_i/N_{i+1}$,
	for each $i$, and a constructive membership test for $G$. If $G$ is solvable, then this algorithm is Las Vegas. If $G$ is non-solvable, then this is a Monte Carlo algorithm which can detect that.
\end{lemma}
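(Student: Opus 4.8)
The plan is to process $G$ through its derived series, handling each abelian section with the machinery of Section~\ref{secBBAb}, and to bootstrap membership and presentations from the top section down.

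\textbf{Step 1: Bound the prime powers and verify computability.} First I would observe that since $G$ is a $\pi(\le \log\log n)$-group of order dividing $n\in\hat{\mathcal{D}}$, every prime $p$ dividing $|G|$ satisfies $p\le\log\log n$ and every $p$-part $p^e$ of $|G|$ satisfies $p^e\le\log n$ (using that $n$ is pseudo-square-free). Hence the Sylow subgroups of $G$ and of every section of $G$ have order at most $\log n$, and the quantity $\epsilon(G,p)\lceil p^{1/2}\rceil^{d(G_p)-1}$ appearing in Theorem~\ref{thm:KP} is bounded by a polynomial in $\log n$ for every abelian section encountered. This is exactly what makes the abelian isomorphism test (Theorem~\ref{thm:KP}, Proposition~\ref{prop:EDL}) run in polynomial time here.

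\textbf{Step 2: Build the derived series with constructive data.} I would compute the derived series by the standard randomised procedure: generate random elements and take commutators of random conjugates to get generators of $G'=[G,G]$ (this is the Monte Carlo step — it may under-generate, which is how non-solvability is detected), then iterate. At each stage $N_i$, the quotient $N_i/N_{i+1}$ is abelian; using the membership test for $N_{i+1}$ inside $N_i$ (built recursively, starting from $N_{\ell+1}=1$ where membership is trivial) we obtain $N_i/N_{i+1}$ as a group of black-box type via a congruence. Applying Corollary~\ref{corIF1}-style reasoning together with Theorem~\ref{thm:KP} and Proposition~\ref{prop:EDL} gives a canonical basis for $N_i/N_{i+1}$; since the exponent bound makes EDL polynomial-time here, this upgrades to a \emph{two-way} isomorphism $\phi_i\colon\mathbb{Z}/d_{i1}\times\cdots\times\mathbb{Z}/d_{if_i}\to N_i/N_{i+1}$ by Proposition~\ref{prop:EDL}. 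From the two-way isomorphisms we get a constructive membership test for each $N_i/N_{i+1}$ (Proposition~\ref{prop:abelian-member}) and a constructive presentation (Proposition~\ref{prop:abelian-pres}).

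\textbf{Step 3: Assemble the presentation and membership test for $G$, and certify solvability.} Working down from $N_1/N_2$, I would splice the constructive presentations of the abelian sections along the derived series into a constructive presentation of $G$: generators are preimages of generators of the sections, and relations are the section relators together with relators expressing that conjugation by each generator of $N_i$ maps generators of $N_{i+1}$ into $N_{i+1}$ (read off from membership tests), with homomorphism certificates supplied by von Dyck's theorem. Running this presentation-building process to completion (reaching $N_{\ell+1}=1$) is the certificate of solvability: if $G$ is solvable the derived series terminates and every step succeeds, so the algorithm is Las Vegas (it may fail only by the random generation of some $[N_i,N_i]$ being incomplete, which it can detect by re-testing, or abort within the user's probability bound); if $G$ is non-solvable the derived series stabilises at a nontrivial perfect subgroup, which the procedure detects, making it Monte Carlo in that case. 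Finally, a constructive membership test for $G$ itself comes from chaining the section membership tests: given $g:\code{A}$, reduce modulo $N_2$ using $\phi_1^{-1}$, peel off that layer, and recurse.

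\textbf{Main obstacle.} The delicate point is not the group theory but the bookkeeping in the type-theoretic model: one must guarantee that the composed membership tests and presentations only ever return a non-\code{Nothing} answer for genuine elements of $G$ (the ambiguity flagged before Lemma~\ref{lem:big-member}), and that the homomorphism certificates compose correctly through the extension. Controlling this — essentially an induction on the length $\ell$ of the derived series showing that the constructive membership test built at stage $i$ is correct given the one at stage $i+1$ — is where the real care is needed; the complexity analysis is then routine given the $p^e\le\log n$ bound from Step~1.
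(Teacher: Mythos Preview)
Your proposal is essentially the same strategy as the paper's proof: recurse down the derived series using Monte Carlo commutator generation, apply the two-way abelian recognition (enabled by the $p^e\le\log n$ bound) to each section, and chain the resulting membership tests and presentations back up; the Las Vegas/Monte Carlo dichotomy is handled identically. The only notable difference is that the paper packages the ``splicing'' of presentations by a direct appeal to Luks' constructive-presentation extension lemma \cite{Luks:mat}*{Lemma~4.3}, whereas you describe this step by hand; invoking Luks here would tighten your Step~3 and spare you the bookkeeping you flag as the main obstacle.
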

General constructive membership in groups of black-box type has an extensive history
with several substantive results, cf. \cite{BBS} and bibliography contained therein.
Our self-contained proof below is a simplified take on the Beals-Babai \emph{blind-descent}
strategy.
\begin{proof}
	Let $G=\langle S\rangle$ be a $\pi(\leq \log\log n)$-group of order dividing $n$.  
	We use the Monte Carlo algorithm of 
	\cite{Seress}*{Theorem~2.3.12} to compute generators for $H\leq [G,G]$ 
	such that with high probability we have $H=[G,G]$.  Now make a recursive 
	call on $H$ o prove $H$ is solvable and build a constructive presentation 
	for $H$ together with a constructive membership test $\Upsilon'$ on $H$.  
	If this step aborts, then report that $G$ is probably non-solvable.
	Otherwise, use $\Upsilon'$ to form the group $G/H$.  Now test if $G/H$ is abelian 
	(thus proving $H=[G,G]$), and if not, restart at most $\varepsilon^{O(\log n)}$-time
	at which point the algorithm reports that $G$ is probably non-solvable.
	
	Now when $G/H$ is abelian, use the algorithms 
	of Section \ref{sec:2wayisom} to produce a two-way isomorphism 
	$\alpha:\code{Hom}_{\code{Group}}\code{[List[Int],A]}$ and its inverse
	$\alpha^{-1}:\code{Hom}_{\code{Group}}\code{[A,List[Int]]}$,
	describing $\mathbb{Z}/{d_1}\times\cdots \times \mathbb{Z}/{d_s}\cong G$. 
	Now we can apply Propositions~\ref{prop:abelian-pres} \&
	\ref{prop:abelian-member} to form a constructive presentation for $G/H$ 
	and a constructive membership test $\Upsilon''$.  Luks' constructive 
	presentation extension lemma \cite{Luks:mat}*{Lemma~4.3} can now be used 
	to define a constructive presentation for $G$ from the known constructive 
	presentations for $G/H$ and $H$.  Finally, we define $\Upsilon$ by
	\[
	\begin{array}{lcl}
	\code{def }\Upsilon(x:\code{A}^?):\code{SLP[Character]}^?&:\equiv
		&\text{if }x\equiv \Upsilon''_x(S)\text{ and } 
		x^{-1}\cdot(S)\Upsilon''_x\equiv (S)\Upsilon'_{x^{-1}\cdot (S)\Upsilon''_x}\\
		&&\text{then} \Upsilon''_x\cdot 
			\left(\Upsilon'_{x^{-1}\cdot (S)\Upsilon''_x}\right)^{-1}\text{ else }\code{Nothing}.
	\end{array}
	\]
	We claim that $\Upsilon$ is a constructive membership test for $G$. To prove 
	this,  consider $x:\code{A}$. If $x$ describes an element in $G$, then also 
	$x\in G/H$, and so $x\equiv (S)\Upsilon''_x\bmod{H}$ by definition of 
	$\Upsilon''$.  This implies $y=x^{-1}\cdot (S)\Upsilon''_x\in H$, and so 
	$y\equiv (S)\Upsilon'_y$ by definition of $\Upsilon'$.  Together, we have
	\begin{align*}
		x & \equiv (S)\Upsilon''_x y^{-1}
			=(S)\Upsilon''_x\cdot \left((S)\Upsilon'_{x^{-1}\cdot (S)\Upsilon''_x}\right)^{-1}
			=(S)\left(\Upsilon''_x\cdot \left(\Upsilon'_{x^{-1}\cdot (S)\Upsilon''_x}\right)^{-1}\right),
	\end{align*} 
	so we return the SLP $\Upsilon''_x\cdot (\Upsilon'_{y})^{-1}$. We now  comment 
	on the complexity. The derived series of $G$ (in fact, any properly descending 
	subgroup chain) has length at most $l = O(\log n)$. To guarantee a correct 
	return with probability $1-\varepsilon$,  we need to run the above Monte Carlo 
	algorithm with prescribed probability $\delta=\varepsilon^{l}$.  In the abelian 
	case we can apply the algorithm of Theorem~\ref{thm:KP} in polynomial time, 
	because if $p^e\mid |G|$, then  $p\leq \log \log n$ and $p^e\leq \log n$.
\end{proof}

We record a few consequences of obtaining constructive presentations.  Versions of
these results were provided in \cite{Luks:mat} for the context of groups of matrix
type.  Here we provide versions for groups of black-box type.

\begin{lemma}\label{lem:get-normal}
Let $G:\code{Group[A]}$  with recognizable cyclic normal subgroup $B$. If  we have a constructive presentation of $G/B$, then we can compute generators for $B$ in polynomial time.
\end{lemma}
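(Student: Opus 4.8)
The plan is to extract generators of $B$ directly from the data of the constructive presentation, with no randomisation. Write $\langle X\mid R\rangle$ for the given constructive presentation of $G/B$, with free group $F_X$, the homomorphism $\phi\colon F_X\to G$ (evaluable in polynomial time via the SLP functor, see Section~\ref{sec:general-inputs}), the function $\psi\colon G\to F_X$ (also evaluable in polynomial time), and the relator set $R\subseteq F_X$ satisfying conditions (i) and (ii) of Definition~\ref{def:pres}. Two elementary observations set the stage. First, writing $q\colon G\to G/B$ for the quotient map, condition (i) gives $q=(q\phi)\circ\psi$, so $q\phi$ is onto; hence $X\phi$ generates $G$ modulo $B$, that is, $\tilde G B=G$ for $\tilde G:=\langle X\phi\rangle$. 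Second, condition (ii) says $B\phi^{-1}=\langle R^{F_X}\rangle$, so $r\phi\in B$ for every $r\in R$, and condition (i) says $b_s:=s^{-1}\bigl((s)\psi\phi\bigr)\in B$ for every generator $s\in S$ of $G$. The algorithm computes, in polynomial time, the finite list $\{r\phi : r\in R\}\cup\{b_s : s\in S\}$ and returns it.

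The heart of the matter is to prove that this list generates $B$, and this is where cyclicity is essential. Since $B$ is cyclic and normal in $G$, it has a unique subgroup of each order, so every subgroup of $B$ is normal in $G$. Put $M=\langle b_s : s\in S\rangle$ and $B_1=\langle r\phi : r\in R\rangle$; both lie in $B$, hence are normal in $G$. Because $\psi$ splits $q$ up to $B$, each $s\in S$ satisfies $s=\bigl((s)\psi\phi\bigr)b_s^{-1}\in\tilde G M$, so $G=\langle S\rangle\subseteq\tilde G M$; as $M\lhd G$, the product $\tilde G M$ is a subgroup, whence $\tilde G M=G$. Next, $\tilde G\cap B=\phi\bigl(B\phi^{-1}\bigr)=\phi\bigl(\langle R^{F_X}\rangle\bigr)=\langle (R\phi)^{\tilde G}\rangle=B_1$, using once more that the cyclic normal subgroup $B_1$ absorbs its $\tilde G$-conjugates. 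Finally, given $b\in B$, write $b=tm$ with $t\in\tilde G$, $m\in M\subseteq B$ using $G=\tilde G M$; then $t=bm^{-1}\in\tilde G\cap B=B_1$, so $b\in B_1M$. Since also $B_1M\subseteq B$, we get $B=B_1M=\langle r\phi,\,b_s : r\in R,\ s\in S\rangle$, as claimed.

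For the running time, $|S|$ and $|R|$ are bounded by the input size, and evaluating $\phi$ on a relator, evaluating $\psi$ on a generator followed by $\phi$, and performing the relevant multiplications and inversions, cost only polynomially many group operations, each of unit cost in the arithmetic model. Thus the procedure is deterministic and polynomial time; the recognition test $\Upsilon$ for $B$ is not needed for the construction, although it is available and can be used to certify that the returned elements lie in $B$.

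The main obstacle is the generation claim. If $B$ were only abelian rather than cyclic, then $M$ and $B_1$ need not be normal in $G$, and the relators together with the $b_s$ would generate $B$ only modulo the conjugation action of $\tilde G$; recovering all of $B$ would then require adjoining enough $\tilde G$-conjugates, or sampling inside $B$ via $\Upsilon$, and bounding the work would be considerably more delicate. The cyclicity hypothesis is exactly what collapses this difficulty, since in a cyclic normal subgroup every subgroup is automatically $G$-invariant.
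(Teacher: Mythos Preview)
Your proof is correct and follows the paper's approach: evaluate the relators of the constructive presentation inside $G$ to produce elements of $B$, then use cyclicity of $B$ (every subgroup is characteristic, hence normal in $G$) to conclude that the subgroup they generate already equals its normal closure, which is $B$.

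There is one genuine difference worth recording. The paper returns only the relator images $\{w(S):w\in R\}$, with $S=\{x\phi\psi\phi:x\in X\}$, and asserts that their normal closure in $G$ is $B$; this tacitly uses that $\phi(F_X)B=G$ \emph{and} $B\leq\phi(F_X)$, i.e.\ effectively that $\phi$ is onto $G$ (true in the constructive presentations actually produced in Lemma~\ref{lem:small-top}, but not forced by Definition~\ref{def:pres}). You instead adjoin the correction elements $b_s=s^{-1}(s\psi\phi)$ for the given generators $s\in S$ of $G$, and your decomposition $G=\tilde G M$, $\tilde G\cap B=B_1$ shows $B=B_1M$ without any surjectivity assumption on $\phi$. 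So your argument is slightly more robust than the paper's at the cost of a larger (but still polynomial-size) generating list; the underlying idea---cyclicity collapses the normal-closure step---is the same.
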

\begin{proof}
We use the notation of Definition \ref{def:pres} for the constructive presentation of $G/B$. 
For each $x\in X$ we can find $g_x\in G$ such that $x\phi \equiv g_x\bmod B$, 
namely $g_x= x\phi\psi\phi$.  Use these elements to define
$S=\{g_x : x\in X\}$. For a word $w$ in $X$, the evaluation $w(S)$ denotes replacing each 
$x\in X$ with $g_x\in S$.  Note  that $B$ is the normal closure of $\{w(S): w\in R\}$ in the 
group $G$. By assumption, $B$ is cyclic, and therefore there is a unique
subgroup of each order dividing $|B|$;  in particular, for each $w\in R$ and  $g\in G$, we have
$\langle w(S)\rangle=\langle w(S)^g\rangle$. Thus we can generate $B$ as  $B=\langle w(S):w\in R\rangle$. 
\end{proof}

\begin{lemma}\label{lem:sylow-small}
   Let $G:\code{Group[SLP[A]]}$ be a solvable group of order dividing $n$, 
   where the prime factors of $n$ are known. There is a polynomial-time algorithm that
	given a prime $p$, a constructive 
	presentation for $G$, the derived series $G=N_1>\cdots >N_{\ell+1}=1$, and  two-way 
	isomorphisms $\phi_i\colon \mathbb{Z}/d_{i1}\times\cdots \times \mathbb{Z}/d_{if_i}\to N_i/N_{i+1}$ for each $i$,
	decides if $G$ has a unique Sylow $p$-subgroup $P$, and, if so, returns generators for $P$.
\end{lemma}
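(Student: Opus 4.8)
The plan is to compute a Sylow $p$-subgroup of $G$ by climbing the derived series from the bottom and, at each stage, conjugating the ``partial'' Sylow subgroup into agreement with the quotient data supplied by the $\phi_i$. Because the hypotheses give us a constructive presentation of $G$, the two-way isomorphisms $\phi_i\colon \mathbb{Z}/d_{i1}\times\cdots\times\mathbb{Z}/d_{if_i}\to N_i/N_{i+1}$, and the prime factorisation of $n$, every arithmetic step below (extracting the $p$-part of each $d_{ij}$, computing discrete logs, inverting $\phi_i$) is polynomial time. First I would observe that each abelian section $N_i/N_{i+1}$ has a \emph{unique} Sylow $p$-subgroup, namely the preimage under $\phi_i$ of $\prod_j (\mathbb{Z}/d_{ij})^{[p^{-v_p(d_{ij})}d_{ij}]}$-type factor; explicitly it is generated by the images of the standard generators raised to the $p'$-part of their orders. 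Using the two-way isomorphism $\phi_i$ and a constructive membership test for $N_i/N_{i+1}$ (available from the constructive presentation, cf.\ Proposition~\ref{prop:abelian-member}), we get generators $T_i\subseteq N_i$ whose images generate this Sylow $p$-subgroup of $N_i/N_{i+1}$.

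Next I would build, by downward induction on $i=\ell,\ell-1,\dots,1$, a $p$-subgroup $P_i\le N_i$ which is a Sylow $p$-subgroup of $N_i$. The base case $P_{\ell+1}=1$ is trivial. Given $P_{i+1}$, a Sylow $p$-subgroup of $N_{i+1}$, set $Q=\langle P_{i+1},\,\tilde T_i\rangle$ where $\tilde T_i\subseteq N_i$ is a lift of the Sylow generators of $N_i/N_{i+1}$ chosen above; then $Q N_{i+1}/N_{i+1}$ is the full Sylow $p$-subgroup of $N_i/N_{i+1}$ and $Q\cap N_{i+1}$ is a $p$-group, so $|Q|$ equals the $p$-part of $|N_i|$, i.e.\ $Q$ is a Sylow $p$-subgroup of $N_i$. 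The only subtlety is that the naive lifts $\tilde T_i$ need not normalise $P_{i+1}$, so $Q$ as written is a $p$-subgroup only after we replace each lift $t\in\tilde T_i$ by $t w$ for a suitable $w\in N_{i+1}$ solving the ``cocycle correction'' $ (tw)$-conjugation preserves $P_{i+1}$ inside the $p$-group generated; since $N_{i+1}/N_{i+2}$ is abelian and we descend one section at a time, this correction reduces at each level to solving a linear system over $\mathbb{Z}/d_{(i+1)j}$, which the two-way isomorphism $\phi_{i+1}$ lets us do in polynomial time. Iterating to $i=1$ yields generators for a Sylow $p$-subgroup $P=P_1$ of $G$, and the length of the derived series is $O(\log n)$, so the whole construction is polynomial time.

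Finally, deciding whether the Sylow $p$-subgroup is \emph{unique} (equivalently normal): having produced $P$ with generators, I would test $P\lhd G$ by checking that $s^{-1}P s = P$ for each generator $s$ of $G$, which by the recognizability/constructive-membership machinery amounts to verifying that each conjugate $s^{-1}ts$ of a generator $t$ of $P$ lies in $P$ — a finite check, polynomial in $\log n$ and the number of generators. Return \textbf{True} with the generators of $P$ if all checks pass, and \textbf{False} otherwise. The main obstacle I expect is the normalisation step in the inductive lift: making the lifted generators $\tilde T_i$ genuinely generate a $p$-group together with $P_{i+1}$ requires solving the correction equations inside the abelian section $N_{i+1}/N_{i+2}$, and one must verify these equations are always solvable (they are, because $\gcd(p$-part$, p'$-part$)=1$ lets one split the section, which is exactly where solvability of $G$ and the structure of $\hat{\mathcal D}$ enter) and that the solution can be read off from $\phi_{i+1}^{-1}$ in polynomial time.
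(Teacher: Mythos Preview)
Your proposal has a genuine gap at the ``cocycle correction'' step. You want to modify each lift $t\in\tilde T_i$ by some $w\in N_{i+1}$ so that $\langle P_{i+1},\,tw:t\in\tilde T_i\rangle$ is a $p$-group, and you assert this reduces to a linear system in the abelian section $N_{i+1}/N_{i+2}$. But two things are being conflated. The condition that $tw$ normalise $P_{i+1}$ is a Sylow conjugacy problem inside $N_{i+1}$: you need $(P_{i+1})^{tw}=P_{i+1}$, and whether a given $w\in N_{i+1}$ achieves this is not determined by the image of $w$ in $N_{i+1}/N_{i+2}$. Even once $P_{i+1}$ is normalised, making the generated group a $p$-group requires the lifts to be $p$-elements modulo $P_{i+1}$, and $N_i/P_{i+1}$ is in general not abelian, so no linear system presents itself. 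Your closing remark that solvability and the coprimality of $p$-part and $p'$-part resolve this does not address either issue; splitting an abelian section into its $p$- and $p'$-parts is trivial and unrelated to the conjugacy problem above. (The lemma also does not assume $n\in\hat{\mathcal D}$.)

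The paper avoids all of this with two devices you are missing. First, it carries the \emph{uniqueness} hypothesis through the recursion: it asks recursively whether $N_2$ has a unique Sylow $p$-subgroup $Q$, and if not, reports failure immediately. When such $Q$ exists it is characteristic in $N_2\lhd G$, hence $Q\lhd G$, so no normalising correction is ever needed. Second, instead of correcting lifts, the paper replaces each lift $g_x$ by $h_x=g_x^{k}$ with $k$ the $p'$-part of $n$; this makes $h_x$ a genuine $p$-element of $G$ while preserving $\langle h_xN_2\rangle=\langle g_xN_2\rangle$. One then sets $P=\langle Q,\,h_x\rangle$ and simply \emph{tests} whether $P$ is a normal $p$-subgroup of $G$ (using Lemma~\ref{lem:small-top} for the order and the membership test for normality). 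If $G$ does have a normal Sylow $p$-subgroup $P^*$, then every $p$-element of $G$ lies in $P^*$, so $Q\le P^*$ and each $h_x\in P^*$, whence $P\le P^*$; an order count gives $P=P^*$ and the tests pass. Your final normality check matches the paper's, but the construction preceding it is where your argument breaks down.
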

\begin{proof}
	The method is a simplification of the Frattini method of Kantor, e.g. \cite{KLM},
	specializing to solvable groups and the case of normal Sylow subgroups.

        \smallskip
        
    \emph{Algorithm.}
    Make a recursive call to $N_2$ to decide if $N_2$ has a unique
    Sylow $p$-subgroup $Q$, and if not then report that $G$ does not
    have a unique Sylow $p$-subgroup.  Otherwise, compute the largest 
    divisor $k$ of $n$ coprime to $p$. Within 
    $\mathbb{Z}/d_{11}\times\cdots \times \mathbb{Z}/d_{1f_i}$
	compute a generating set $S$ for its unique Sylow $p$-subgroup.
	For each $x\in S$, let $g_x\in G$ be a representative of the 
	coset $(x\phi_1)\in G/N_2$.  Set $h_x = g_x^{k}$.
	Set $P=\langle Q,h_x : x\in S\rangle$.  Use the algorithm 
	of Lemma~\ref{lem:small-top} to test if $P$ has order a power of $p$
	and use the membership test on $P$ to prove that  each generator $y\in G$ satisfies $P^y=P$.
	If so, return $P$; otherwise report that $G$ does not have a unique
	Sylow $p$-subgroup.

\smallskip
         
\emph{Correctness.}
	Note that our algorithm returns a normal $p$-subgroup $P$ of $G$.  We claim
	it is in fact a Sylow $p$-subgroup.  First observe that $Q\leq P$. Since $Q\unlhd N_2$ is the unique Sylow $p$-subgroup of $N_2$, it follows that $P\cap N_2\leq Q$. In conclusion, $Q=P\cap N_2$, and so  $|N_2:P\cap N_2|=|PN_2:P|$ is
	prime to $p$.  Moreover, for each $x\in S$ we have $\langle h_x N_2\rangle =\langle x\phi_1\rangle$,  hence 
	$PN_2/N_2=\langle S\rangle \phi_1$ is the Sylow $p$-subgroup of $G/N_2$, so $|G:PN_2|$ is prime to $p$.  In conclusion, $|G:P| = |G:PN_2| |PN_2:P|$ is
	prime to $p$, hence it follows that $P$ is a Sylow
	$p$-subgroup of $G$. Since $P\unlhd G$, the Sylow $p$-subgroup of $G$ is unique.

\smallskip
        
\emph{Timing.}
	The algorithm uses polynomial time routines and makes a most $\log |G|$ 
	recursive calls.  So in total it is a polynomial-time algorithm.
\end{proof}

\begin{lemma}\label{lem:max-split}
	If $G\cong \mathbb{Z}/a\ltimes \mathbb{Z}/b$ with $\gcd(a,b)=1$, then 
	$G\cong \mathbb{Z}/c\ltimes \mathbb{Z}/d$ with $\gcd(c,d)=1$ where 
	where $\mathbb{Z}/d$ is the product of all normal Sylow subgroups of $G$;
	in particular, $b\mid d$, and $c$ and $d$ are determined by the isomorphism type of $G$.
\end{lemma}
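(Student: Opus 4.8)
The plan is to write $G=A\ltimes_\theta B$ with $A=\langle\alpha\rangle\cong\mathbb{Z}/a$, $B=\langle\beta\rangle\cong\mathbb{Z}/b$ and $\gcd(a,b)=1$, and to show that the subgroup $D$ generated by all normal Sylow subgroups of $G$ is a cyclic normal Hall subgroup of $G$ admitting a cyclic complement. First I would record that every Sylow subgroup of $G$ is cyclic: since $\gcd(a,b)=1$, each prime $p\mid|G|$ divides exactly one of $a,b$. If $p\mid b$, then $v_p(|G|)=v_p(b)$, so a Sylow $p$-subgroup of $G$ is the Sylow $p$-subgroup of $B$, which is characteristic in $B\unlhd G$, hence normal in $G$ and cyclic. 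If $p\mid a$, then $v_p(|G|)=v_p(a)$, so a Sylow $p$-subgroup of $G$ may be taken inside $A$ and is therefore cyclic, although it need not be normal.

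Next I would set $D=\langle P : P\text{ a normal Sylow subgroup of }G\rangle$ and $d=|D|$. The normal Sylow subgroups are pairwise of coprime order and each normal, so $D$ is their internal direct product; being a direct product of cyclic groups of pairwise coprime order, $D$ is cyclic, and it is normal in $G$. Moreover $D$ is a \emph{Hall} subgroup of $G$: for every prime $p\mid d$ the factor $D$ contains is the full Sylow $p$-subgroup of $G$, so $v_p(d)=v_p(|G|)$, whence $\gcd(d,|G|/d)=1$. Since every Sylow subgroup of $B$ is normal in $G$ (first paragraph), we get $B\leq D$, and in particular $b\mid d$; this yields the ``in particular'' clause.

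Then I would invoke the Schur--Zassenhaus theorem \cite{rob}*{(9.1.2)}: the normal Hall subgroup $D$ has a complement $C$ in $G$ with $|C|=|G|/d=:c$, and $\gcd(c,d)=1$ because $D$ is Hall, so $G=C\ltimes D$. It remains to see $C$ is cyclic. As $B\leq D$, the quotient $G/D$ is an epimorphic image of $G/B\cong A\cong\mathbb{Z}/a$, hence cyclic; and $C\cong G/D$, so $C\cong\mathbb{Z}/c$. Therefore $G\cong\mathbb{Z}/c\ltimes\mathbb{Z}/d$ with $\gcd(c,d)=1$, where $\mathbb{Z}/d$ is the product of all normal Sylow subgroups of $G$ and $b\mid d$. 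Finally, $D$ is defined purely from the subgroup lattice of $G$, so $d$, and hence $c=|G|/d$, depend only on the isomorphism type of~$G$.

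I do not expect a genuine obstacle; the real content is the observation that ``absorbing'' the normal Sylow subgroups of the complement into the normal part still leaves a complement, which is exactly Schur--Zassenhaus once one knows $D$ is a normal Hall subgroup. The only points needing care are the verification that $D$ is a Hall subgroup (so that coprimality of $c$ and $d$ comes for free) and that $G/D$ is cyclic, both of which rest on $B\leq D$ together with the fact that the coprimality hypothesis $\gcd(a,b)=1$ keeps the prime divisors of $|G|$ separated between $A$ and $B$.
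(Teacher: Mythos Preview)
Your proof is correct and follows essentially the same approach as the paper: both show the Sylow subgroups are cyclic, form the product $D$ of the normal ones, verify it is a cyclic normal Hall subgroup, apply Schur--Zassenhaus, and use $B\leq D$ to conclude the complement is cyclic via $G/D$ being a quotient of $G/B\cong\mathbb{Z}/a$. The only cosmetic difference is that the paper builds $D$ iteratively (one normal Sylow at a time, with an inductive claim that the running product stays cyclic, normal, and Hall), whereas you define $D$ directly; your explicit observation that Sylow subgroups of $B$ are characteristic in $B\unlhd G$ actually supplies a justification for $B\leq D$ that the paper leaves implicit.
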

\begin{proof}
	By assumption, the Sylow $p$-subgroups of $G$ are isomorphic to ones in either 
	$\mathbb{Z}/a$ or $\mathbb{Z}/b$; as both these groups are cyclic, all 
	Sylow $p$-subgroups of $G$ are cyclic.  Initialize the subgroup $Z=1$ of $G$.  
	For each prime divisor $p$ of $|G|$ compute a Sylow $p$-subgroup $P$ of $G$; 
	if $P$ is normal in $G$, replace $Z$ by $ZP$.  We claim that at each iteration,
	$Z$ is normal, cyclic, and a Hall subgroup of $G$: this is true for $Z=1$. 
	Now in an iteration step, by the induction hypothesis, $|Z|$ is coprime to $|G:Z|$, 
	hence also 	to $|P|$, which shows that $Z\cap P=1$.  Both $Z$ and $P$ are normal 
	and cyclic, so $[Z,P]=1$, and therefore $ZP=Z\times P$ is normal and cyclic; in particular,  $ZP$ is 
	a Hall subgroup. At the end of the iteration, $Z$ contains all normal Sylow 
	subgroups of $G$.  Since $Z$ is a normal Hall subgroup, the  Schur-Zassenhaus 
	Theorem  \cite{rob}*{(9.1.2)} shows that $Z$ has a complement $U$ in $G$, 
	thus $G=UZ=U\ltimes Z$. Since $\mathbb{Z}/b\leq Z$ and 
	$G/(\mathbb{Z}/b)\cong \mathbb{Z}/a$ is cyclic, it follows that $U$ is cyclic. 
	Thus $G\cong \mathbb{Z}/c\ltimes \mathbb{Z}/d$ with $c=|U|$ and $d=|Z|$ coprime.
\end{proof} 

We can now prove the main result of this section.

\begin{proof}[Proof of Theorem~\ref{thm:meta-cyclic-recog}] 
	First use Lemma~\ref{lemnab} to factor $n=ab$ where $b\in \pi(>\log\log n)$
	and $a\in\pi(\leq \log\log n)$.   Use the algorithms of Lemmas~\ref{lem:big-member} 
	and \ref{lem:small-top} to constructively recognize $G/B$ where $B$ is the Hall 
	$\pi(>\log\log n)$-subgroup of $G$ of order $b$. Now we use Lemma \ref{lem:get-normal} to get a generating set for $B$. This allows us to compute (pseudo-)random elements of $B$ until we find a cyclic generator: recall that we assume knowledge of the prime divisors of $n$, so for a random $g\in B$ we can check whether $g^{b/p}\ne 1$ for all prime divisors of $b$; if this holds, then $B=\langle g\rangle$ is determined. Note that the number of generators of 
$(\mathbb{Z}/b)^\times$ is $\varphi(b)/b\in \Omega(1/\log\log b)$, see \cite{euler}*{Theorem~15}, so finding $g$ can be done in Las Vegas  polynomial time. This then allows us to construct a one-way isomorphism 
	$\beta_b\colon\mathbb{Z}/b\to B$. It follows from Theorem~\ref{thm:mostly-split} and the assumptions on $n\in \hat{\mathcal{D}}$ that such a $B$ and isomorphism exist.     Note that if $G$ is coprime meta-cyclic, then every Sylow subgroup of $G$ is cyclic, 
    see the proof of Lemma~\ref{lem:max-split}. Using Lemma~\ref{lem:sylow-small}, for 
    each prime divisor $p$ of $a$, decide if the Sylow $p$-subgroup $P$ of $G/B$ is normal, 
    and if so, use the constructive presentation to decide if $P$ is abelian. In that case, use 
    {\sf 2-AbelRecog} to verify that $P$ is cyclic and to construct a two-way
    isomorphism $\beta_p\colon\mathbb{Z}_{p^{e(p)}}\to P$; this can be done in polynomial time since $p\leq \log \log n$. If we find a non-cyclic Sylow subgroup, then $G$ is not coprime meta-cyclic.

	Let $S$ be the 	set of those primes $p$ for which $\beta_p$ has been constructed; 
	for each $p\in S$, let $g_p\in G$ be a preimage to $1\beta_p$ (using the 
	constructive presentation for $G/B$) and set $x_p={g_p}^{n/p^{e(p)}}$. 
	Note that $G$ splits over $B$, thus the group  $K=\langle B,x_p : p\in S\rangle$ 
	is a Hall subgroup of $G$. Furthermore, composing $1\mapsto 1\beta_p\mapsto x_p$ 
	gives a one-way isomorphism  
	$\hat{\beta}_p\colon \mathbb{Z}/p^{e(p)}\to \langle x_p\rangle\leq K$.
	Hence, $\beta=\beta_b\prod_{p\in S} \hat{\beta}_p$ is a one-way isomorphism
	$\mathbb{Z}/d\to K$ where $d=b\prod_{p\in S} p^{e(p)}$. 
	Along with generators for $K$, we also know the order, and thus $K$ is 
	recognizable by Lemma~\ref{lem:big-member}.

	Lastly, note that if $G$ is coprime meta-cyclic, then Lemma~\ref{lem:max-split} 
	shows that $G=U\ltimes K$ where  $U$ is a cyclic complement to the Hall subgroup $K$.	
	We now attempt to construct this complement.  Use Lemmas~\ref{lem:big-member} and 
	\ref{lem:small-top}  to constructively recognize $G/K$ together with a two-way 
	isomorphism $\phi_1\colon\mathbb{Z}/{d_1}\times\cdots \times \mathbb{Z}/{d_s}\to G/K$, 
	where each $d_i$ divides $d_{i+1}$.  If $s>1$, then $G/K$ is not cyclic and we  report 
	that $G$ is not coprime meta-cyclic. If $s=1$, then set $g\in G$ to be a preimage of 
	$1\phi_1$, and define $U=\langle x\rangle$ where $x=g^{d}$; return $G=U\ltimes K$. 
	Evidently, $|x|=d_1$ because this is the order of $G/K\cong \mathbb{Z}/{d_1}$.
	We further see an evident one-way isomorphism $\mathbb{Z}/{d_1}\to U$, and because
	$d_1\in \pi(\leq \log\log n)$, we can apply {\sf 2-AbelRecog} in polynomial 
	time to make this a two-way isomorphism.
\end{proof}

\subsection{Deconjugation}
Having now the ability to decide if a group is coprime meta-cyclic, and, if so, to 
construct such an extension, it remains to decipher the action of one cyclic group
on another.  We do this with a process we call \emph{deconjugation}.

\begin{theorem}\label{thm:deconj}
Let $G:\code{Group[A]}$ with generating set $\{x,y\}$, such that $|x|=a$ and $|y|=b$ 
are coprime, and $x^{-1} y x\in \langle y\rangle$. Assuming we know the prime factors
of $a$ and $b$, Algorithm \ref{algo:recog-ZfZm}  is Las Vegas and returns an integer $v$ such 
that  $y^x=y^v$ in $O((\log a)(\log b)\nu(\log \nu)^2(\log a)^2+(\log a)^2(\log b)^2)$ 
group operations, where $\nu$ is the largest prime divisor of $a$. 
\end{theorem}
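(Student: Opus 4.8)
The plan is to use the fact that conjugation by $x$ acts on the cyclic group $\langle y\rangle\cong\mathbb{Z}/b$ as multiplication by the unknown unit $v\in(\mathbb{Z}/b)^{\times}$, and that $x^{a}=1$ forces $v^{a}\equiv 1\pmod b$; this confines $v$ to a small subgroup of $(\mathbb{Z}/b)^{\times}$ that can be searched by a Pohlig--Hellman discrete logarithm whose cost is governed by the primes of $a$ rather than those of $b$. Concretely, iterating the relation $x^{-1}yx=y^{v}$ gives $x^{-k}yx^{k}=y^{v^{k}}$ for all $k\ge 0$, and taking $k=a$ shows $y^{v^{a}}=y$, hence $v^{a}\equiv 1\pmod b$. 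Write $b=p_{1}^{a_{1}}\cdots p_{t}^{a_{t}}$ (this factorization is given) and set $v_{i}\equiv v\pmod{p_{i}^{a_{i}}}$; by the Chinese Remainder Theorem it suffices to compute each $v_{i}$. If $p_{i}=2$, then $2\mid b$, so $a$ is odd (as $\gcd(a,b)=1$), and the only element of odd order in $(\mathbb{Z}/2^{a_{i}})^{\times}$ is $1$, whence $v_{i}=1$. If $p_{i}$ is odd, then $(\mathbb{Z}/p_{i}^{a_{i}})^{\times}$ is cyclic of order $N_{i}=p_{i}^{a_{i}-1}(p_{i}-1)$, and because $\gcd(a,p_{i})=1$ the order of $v_{i}$ divides $\gcd(a,N_{i})=\gcd(a,p_{i}-1)=:g_{i}$; thus $v_{i}$ lies in the cyclic subgroup of $g_{i}$-th roots of unity, and, crucially, $g_{i}\mid a$, so the prime factorization of $g_{i}$ is read off from that of $a$ (note also that $N_{i}/g_{i}$ is computable from $p_{i},a_{i},g_{i}$ alone, without factoring $p_{i}-1$).

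Next I would reduce the computation of each $v_{i}$ to a short discrete logarithm carried out through the black box. Put $y_{i}=y^{\,b/p_{i}^{a_{i}}}$, an element of order $p_{i}^{a_{i}}$, so that $x^{-k}y_{i}x^{k}=y_{i}^{\,v^{k}}=y_{i}^{\,v_{i}^{k}\bmod p_{i}^{a_{i}}}$. Using randomization -- the step that makes the algorithm Las Vegas -- I would find a primitive $g_{i}$-th root of unity $\rho_{i}$ modulo $p_{i}^{a_{i}}$: sample a random $c\in(\mathbb{Z}/p_{i}^{a_{i}})^{\times}$, set $\rho_{i}=c^{\,N_{i}/g_{i}}\bmod p_{i}^{a_{i}}$, and accept if $\rho_{i}^{\,g_{i}/q}\not\equiv 1$ for every prime $q\mid g_{i}$ (all such $q$ divide $a$, hence are known); a random $c$ succeeds with probability $\varphi(g_{i})/g_{i}=\Omega(1/\log\log a)$. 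Then $v_{i}=\rho_{i}^{\,t_{i}}$ for a unique $t_{i}\in\{0,\dots,g_{i}-1\}$, which I recover prime-power by prime-power: for each prime power $q^{f}\parallel g_{i}$, set $\sigma=\rho_{i}^{\,g_{i}/q^{f}}\bmod p_{i}^{a_{i}}$ (a primitive $q^{f}$-th root of unity) and compute the black-box element $W=x^{-g_{i}/q^{f}}y_{i}x^{g_{i}/q^{f}}=y_{i}^{\,\sigma^{\,t_{i}\bmod q^{f}}}$; the residue $t_{i}\bmod q^{f}$ is then extracted digit-by-digit in base $q$ from $W$ by the standard baby-step giant-step method applied to the base $y_{i}$ and the known integer $\sigma$ (the relevant powers $y_{i}^{\,\sigma^{s}}$ are produced iteratively by integer-exponentiating by $\sigma$), costing $O(\sqrt{q})$ group exponentiations per digit. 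Combining the residues $t_{i}\bmod q^{f}$ by the Chinese Remainder Theorem yields $t_{i}$, hence $v_{i}=\rho_{i}^{\,t_{i}}\bmod p_{i}^{a_{i}}$; a final Chinese Remainder step over $i$ returns $v\in\{0,\dots,b-1\}$, which one may verify by checking $x^{-1}yx=y^{v}$.

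Correctness is immediate from the identities $x^{-k}yx^{k}=y^{v^{k}}$ together with the description of the roots of unity in $(\mathbb{Z}/p_{i}^{a_{i}})^{\times}$; the algorithm never returns an incorrect answer, and by the geometric bound on the number of samples needed to hit a suitable $\rho_{i}$ it fails to return only with an arbitrarily small prescribed probability, so it is Las Vegas, and all integer arithmetic uses only the given factorizations of $a$ and $b$. For the timing, a routine count shows that the dominant contribution is the baby-step giant-step searches: each prime $p_{i}$ of $b$ contributes $O(\sqrt{q})$ group operations per digit and $O(\log a)$ digits per prime-power factor $q^{f}\parallel g_{i}$, and since every such $q$ divides $a$ we have $q\le\nu$; multiplying by the $O(\log a)$ prime-power factors of each $g_{i}$, the $O(\log b)$ choices of $i$, and the $O(\log b)$ cost of each group exponentiation produces the $\nu(\log\nu)^{2}(\log a)^{3}(\log b)$ term, while forming the elements $y_{i}$ and the conjugates $x^{-g_{i}/q^{f}}y_{i}x^{g_{i}/q^{f}}$ accounts for the $(\log a)^{2}(\log b)^{2}$ term.

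The main obstacle -- and the reason this is not just a call to a generic discrete-logarithm routine -- is that the naive approach, a discrete logarithm of $y^{x}$ with respect to $y$ inside $\langle y\rangle\cong\mathbb{Z}/b$, costs $\Omega(\sqrt{b})$ operations, which is exactly the sort of difficulty that forces the order restrictions elsewhere in the paper. The essential observation is therefore that $v^{a}\equiv 1\pmod b$, which, worked prime-by-prime on $b$, pins $v$ down inside cyclic torsion subgroups of order dividing $a$; because the factorization of $a$ is known, a Pohlig--Hellman logarithm there is polynomial in $\log b$ and in the largest prime $\nu$ of $a$, effectively replacing a $b$-sized search by an $a$-sized one. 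The only genuinely non-deterministic ingredient is producing a generator of each such torsion subgroup modulo $p_{i}^{a_{i}}$ -- equivalently, an element of $(\mathbb{Z}/p_{i}^{a_{i}})^{\times}$ of order divisible by $g_{i}$ -- for which no fast deterministic method is known; and the case $p_{i}=2$, which superficially looks as if it needs the non-cyclic structure of $(\mathbb{Z}/2^{a_{i}})^{\times}$, turns out to be vacuous because $\gcd(a,b)=1$ makes $a$ odd.
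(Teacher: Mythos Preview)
Your proposal is correct and captures the same mathematical core as the paper: the key observation $v^{a}\equiv 1\pmod b$, the Chinese Remainder reduction on $b$, the Las Vegas search for a generator of the relevant $p$-torsion subgroup of $(\mathbb{Z}/p_i^{a_i})^{\times}$, and a Pohlig--Hellman digit-by-digit extraction carried out via black-box comparisons of conjugates of $y$. The treatment of the prime $2$ is identical in spirit (the paper likewise observes that $x$ must centralise $y$ when $q=2$).

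The organisation differs, and it is worth noting how. The paper's Algorithm~\ref{algo:recog-ZfZm} is written as a three-branch recursion: split $a=uv$ coprimely (recovering $v$ from $v^{u}$ and $v^{v}$ via $m(u)^{s}m(v)^{t}$), then split $b=uv$ coprimely (recovering $v$ from its actions on $y^{u}$ and $y^{v}$ via $m(u)s+m(v)t$), with base case $a=p^{e}$, $b=q^{f}$. You instead unfold this recursion into an explicit CRT over the prime powers of $b$, and then an explicit Pohlig--Hellman over the prime powers of $g_{i}\mid a$; the two are equivalent. In the base case the paper uses a linear search over $\{0,\dots,p-1\}$ for each digit (hence the factor $\nu$ in the stated bound), whereas you invoke baby-step giant-step at cost $O(\sqrt{q})$ per digit; this would actually beat the stated bound, so your accounting that still lands on $\nu(\log\nu)^{2}(\log a)^{3}(\log b)$ is slightly loose, but of course it suffices. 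The recursive formulation in the paper is a bit cleaner to state as pseudocode, while your direct CRT description makes the structural reason for the complexity (all searches are over primes of $a$, never of $b$) more transparent.
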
 

\begin{algorithm}[!htbp]
  {\small
  \caption{{\sf Deconjugate}}
  \label{algo:recog-ZfZm}
  \begin{algorithmic}[1]
  \Require $G:\code{Group[A]}$ with generators $x,y$ such that $|x|=a$ and $|y|=b$ are coprime, and $y^x\in\langle y\rangle$
  \Ensure $v:\code{Int}$ such that $y^x=y^v$

  \vspace*{-2ex}

\begin{tabbing}
\hspace*{1em}\=\hspace*{1em}\=\hspace*{1em}\=\hspace*{1em}\=\hspace*{1em} \\
\code{\bf def } $\code{Deconjugate}(\langle x,y\rangle:\code{Group[A]} ):\code{Int} :\equiv$ \\
\> {\bf if} $a=p^e$ and $b=q^f$, where $p$ and $q$ are primes, {\bf then}\\ 
\>\> find $k\in\mathbb{Z}/b$ and $g\geq 1$ such that $k^{(p^g)}\equiv 1\bmod{b}$ and $k^{(p^{g-1})}\not\equiv 1\bmod b$\\
\>\> set $h=\min\{e,g\}$, and if $g>e$, then replace $k$ by $k^{(p^{g-e})}$\\
\>\> for all $i\in \{0,1,\ldots,h-1\}$ find $m(i)\in\{0,\dots,p-1\}$ with
$y^{(k^{(p^{h-i}\sum_{j=0}^{i-1}  p^{j}m(j))})}=x^{-a/p^i} y x^{a/p^i}$\\
\>\> {\bf return} $k^{\sum_{i=0}^{h-1} m(i)p^{i}}\bmod b$\\[2pt]
\> {\bf if} $a=uv$ where $1=\gcd(u,v)=us+vt$, {\bf then} \\
\>\> $m(u)=\code{Deconjugate}(\langle x^u,y\rangle)$\\
\>\> $m(v) = \code{Deconjugate}(\langle x^v,y\rangle)$\\
\>\> {\bf return} $m(u)^{s}m(v)^{t}\bmod b$\\
\> {\bf if}  $a=p^e$ and $b=uv$ with $1=\gcd(u,v)=us+vt$, {\bf then}\\
\>\> $m(u)=\code{Deconjugate}(\langle x,y^{u}\rangle)$\\[2pt]
\>\> $m(v)= \code{Deconjugate}(\langle x,y^{v}\rangle)$\\[2pt]
\>\> {\bf return} $m(u)s+ m(v)t  \bmod b$
\end{tabbing}
  \end{algorithmic}}
\end{algorithm}
\begin{proof}
Algorithm \ref{algo:recog-ZfZm} describes \code{Deconjugate}; we now prove that this 
algorithm is correct. An example application is given in Example \ref{exDec} below. 
Let us consider first the base case $a=p^e$ and $b=q^f$ with both $p$ and $q$ 
prime.  Assuming $q=2$, it follows that $\Aut(\langle y\rangle)\cong
C_2\times C_{2^{f-2}}$ and so $x$ must centralize $y$.  In this case 
the algorithm computes $m(i)=0$ for each $i$ and returns $0$.  Now suppose $q>2$. In 
this case, the automorphism group $\Aut(\langle y\rangle)\cong  
(\mathbb{Z}/{q^f})^{\times}\cong C_{q-1}\times C_{q^{f-1}}$ is cyclic.
In particular, there is a unique, cyclic, Sylow $p$-subgroup $\mathbb{Z}/p^g$ of 
$\Aut(\langle y\rangle)$, and the algorithm begins by locating a generator $k$ for the 
unique subgroup isomorphic to  $\mathbb{Z}/p^h$ where $h=\min \{g,e\}$.  Thus, 
$y^x=y^{(k^m)}$ for a unique $m\in\mathbb{Z}/p^h$ with $h\leq e$;  in the 
following write $m=\sum_{i=0}^{h-1} m(i)p^i$ with 
each $m(i)\in \{0,\ldots,p-1\}$.  It follows that for each $i$ we have
\begin{align*}
	x^{(-p^{h-i})} y x^{(p^{h-i})} & = y^{(k^{(mp^{h-i})})}  = y^{(k^{(p^{h-i}\sum_{j=0}^{i-1} m(j)p^{j})})}.
\end{align*}
The algorithm solves for each $m(i)$ inductively, beginning with $m(0)$.

Next consider the case when $a=uv$ with $u,v>1$, such that $1=\gcd(u,v)=us+tv$ for 
integers $s$ and $t$ determined by the extended Euclidean Algorithm. By induction, we know 
\begin{align*}
	x^{-u} yx^u & = y^{m(u)}, & 
	x^{-v} y x^{v} & = y^{m(v)},\\
	x^{u} yx^{-u} & = y^{m(u)^{-1}}, & 
	x^{v} y x^{-v} & = y^{m(v)^{-1}}.
\end{align*}
We now return $m(u)^s m(v)^t$, which is correct since
\begin{align*}
	x^{-1}yx^1 & = x^{-(us+vt)}yx^{us+vt}= 
		\overbrace{x^{\mp u}\cdots x^{\mp u}}^{|s|}
		\overbrace{x^{\mp v}\cdots x^{\mp v}}^{|t|}
		y
		\overbrace{x^{\pm v}\cdots x^{\pm v}}^{|t|}
		\overbrace{x^{\pm u}\cdots x^{\pm u}}^{|s|}\\
		& = 
		\overbrace{x^{\mp u}\cdots x^{\mp u}}^{|s|}
		y^{m(v)^t}
		\overbrace{x^{\pm u}\cdots x^{\pm u}}^{|s|}= 
		y^{m(u)^s m(v)^t}.
\end{align*}
Last, we assume $a=p^e$ and $b=uv$ with $u,v>1$ and $1=\gcd(u,v)=us+vt$.
By induction, we know $x^{-1} y^u x  = y^{m(u)}$ and $x^{-1} y^v x  = y^{m(v)}$, so we have
\begin{align*}
	x^{-1} y x & = x^{-1} y^{su+vt} x
		= (x^{-1} y^{us} x)(x^{-1} y^{vt} x)
		= y^{m(u)s+m(v)t}.
\end{align*}
We comment on the timing. The selection of $k$ in each base case $a=p^e$ and $b=q^f$ can 
be done in non-deterministic Las Vegas polynomial time by selecting random elements and testing 
the order: using the known prime factors, we can determine the largest $p$-power $p^g$ dividing 
the order $\phi(b)$ of $(\mathbb{Z}/b)^\times$. Note that the number of generators of 
$(\mathbb{Z}/b)^\times$ is $\varphi(b)/b\in \Omega(1/\log\log b)$, see \cite{euler}*{Theorem~15}, 
so with $O(\log\log b)$ random choices we succeed: if $u\in \mathbb{Z}/b$ with $\gcd(u,b)=1$ is 
random, we test whether $k=u^{\phi(b)/p^g}$ has order $p^g$. If so, then $k$ is a generator of 
the Sylow $p$-subgroup of $(\mathbb{Z}/b)^\times$, and we can replace $k$ by an element of order 
$p^h$ where $h=\min\{g,e\}$.


The work that remains in the base case is to apply $h\leq e$ rounds of searching for the $m(i)$.
Each round produces an integer $v(i)=p^{h-i}\sum_{j=0}^{i-1} m(j)p^{j}$ where the $m(j)$ are 
known for $j<i-1$, and we solve for $m(i-1)\in \{0,\ldots,p-1\}$.  Using repeated squaring, we 
take $O(e\log p)$ operations to create all the $v(i)$. We need at most $p$ attempts to solve for
$m(i-1)$, and each time compute a power with exponent  $k^{v(i)}\bmod{q^f}$ using 
$O(\log v(i))\subset O(e\log p)$ operations. So solving for all the $m(j)$ in a particular base 
case takes at most $O(e^2 p(\log p)^2)\subset O(\nu(\log\nu)^2(\log a)^2)$ operations. 

Finally,  the base cases are pairs $(p^e,q^f)$ where $p^e\mid a$ and $q^f\mid b$ are maximal 
prime powers dividing $a$ and $b$, respectively. So the total number of recursive calls is the product
of the number of distinct prime divisors of $a$ with the number of distinct prime
divisors of $b$, which is in $O((\log a) (\log b))$. In addition, there are $O((\log a)(\log b))$ 
calls to the extended Euclidean Algorithm, which requires at most $O((\log a)(\log b))$ operations each. 
Together, the overall complexity is $O((\log a)(\log b)\nu(\log \nu)^2(\log a)^2+(\log a)^2(\log b)^2)$, as claimed.
\end{proof}

\begin{example}\label{exDec}
Let $\nu=10$ be the bound on the prime divisors of $a=|x|$, and call primes $p\leq \nu$ small. 
Within $\GL_3(541)$ consider 
\[x  = \begin{bmatrix}
	11 & 0 & 0 \\
	0 & 0 & 311 \\
	0 & 311 & 0
	\end{bmatrix}
	\quad\text{and}\quad
	y  = \begin{bmatrix}
	1 & 47 & 494 \\ 0 & 1 & 0 \\ 0 & 0 & 1
	\end{bmatrix}.\]
        We have $|x|=a=2^2\cdot 3^3=108$ and $|y|=b=541$, and factor the small primes  in $541-1=2^2\cdot 3^3\cdot 5$. 
 Taking a random integer modulo $541$, 
say $97$, we calculate $97^{540/2^2}\equiv 489 \bmod{541}$ and
$97^{540/3^3}\equiv 510\bmod{541}$. We test that $489$ has order $2^2$ and that
$510$ has order $3^2$.  If it were not so, repeat the random choice
of $97$.

Note that $z=x^{27}$ has order $4$, and we solve inductively for $m(0),m(1)\in\{0,1\}$: the equation 
$y^{489^{2m(0)}}  = z^{-2}yz^{2}$ yields $m(0)=0$; now  $y^{489^{m(0)+2m(1)}}= y^{489^{2m(1)}}  = z^{-1}yz$ 
forces $m(1)=1$, and we have determined that $x^{-27}y x^{27}=y^{489^{0+2\cdot 1}}=y^{540}$.
        
Next we consider $z=x^4$ of order $27$, and we solve inductively for $m(0),m(1),m(2)\in \{0,1,2\}$: 
first, $z^{-9}yz^9= y^{510^{3^2m(0)}}$ yields $m(0)=0$. Now $z^{-3}yz=y^{510^{3(0+3m(1))}}$ yields 
$m(1)=1$. Lastly, $z^{-1}yz=y^{510^{0+3\cdot 1+3^2m(2)}}$ determines $m(2)=0$. Thus,  
$x^{-4} y x^4=y^{510^{3}}=y^{505}$.
 
  Note that  $1=\gcd(2^2,3^3)=2^2(7)+3^3(-1)$, and so
\begin{align*}
	x^{-1} yx = y^{505^7\cdot 540^{-1}\bmod{541}} = y^{316}.
\end{align*}  
Indeed, $\langle x,y\rangle\cong (\mathbb{Z}/{108})\ltimes_{\theta} (\mathbb{Z}/{541})$ where $\theta$ 
is defined by $1\theta=316$.  Note that $x^{54}$ centralizes $y$ and we still
recover the correct action, so we need not assume $x$ acts faithfully on $y$.  \hfill $\Box$
\end{example}

\subsection{Proof of  Theorem~\ref{thm:iso-meta-cyclic-BB}  (Isomorphism testing of coprime meta-cyclic groups)}
 
We need one further preliminary result.

\begin{lemma}\label{lem:meta-cyclic-iso}
Let  $A,\tilde{A},B,\tilde{B}$ be abelian groups such that the following diagram commutes:
\begin{align}
\xymatrix{ 
1 \ar[r] & B \ar[d]^{\beta}\ar[r] & A\ltimes_{\theta}B
	\ar[d]^{\gamma}\ar[r]
	& A \ar[d]^{\alpha}\ar[r] & 1\\
1 \ar[r] & \tilde{B} \ar[r] & \tilde{A}\ltimes_{\tilde{\theta}} \tilde{B}\ar[r] 
	& \tilde{A} \ar[r] & 1
}
\end{align}
Then $\gamma$ is an isomorphism if and only if $\alpha$ and $\beta$ are
isomorphisms and $\alpha\tilde{\theta}=\theta \Lambda_{\beta}$, where $\Lambda_{\beta}$
is conjugation by $\beta$. If $A$ and $B$ are cyclic, then $\gamma$ is an isomorphism if and only if $\alpha$ and $\beta$ are isomorphisms and $\theta$ and $\tilde\theta$  have the same image.
\end{lemma}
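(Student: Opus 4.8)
The plan is to deduce the general biconditional by the standard comparison of the two split extensions, and then to read off the cyclic refinement; all groups are finite, all maps in the diagram are homomorphisms, and I identify $B,\tilde{B}$ with their images in the middle terms.

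\emph{The general biconditional.} Suppose $\gamma$ is an isomorphism. Commutativity of the left square gives $\gamma|_{B}=\beta$, so $\gamma$ carries $B$ into $\tilde{B}$; comparing the two exact rows (the two middle groups have equal order, and in the intended application the kernel is a characteristic subgroup, so $\gamma(B)=\tilde{B}$ in any case), $\beta$ is an isomorphism $B\to\tilde{B}$ and $\gamma$ induces the isomorphism $\alpha$ on the quotients --- this is the short five lemma. For the compatibility, take $a\in A$ and $b\in B$ inside $A\ltimes_{\theta}B$; then $a^{-1}ba=\theta(a)(b)$, and writing $\gamma(a)=\alpha(a)\,c_{a}$ with $c_{a}\in\tilde{B}$ (legitimate because $\tilde{\pi}\gamma=\alpha\pi$), applying $\gamma$ and using that $\tilde{B}$ is abelian (so $c_{a}$ centralises $\beta(b)$) gives $\beta(\theta(a)(b))=\tilde{\theta}(\alpha(a))(\beta(b))$, i.e.\ $\alpha\tilde{\theta}=\theta\Lambda_{\beta}$. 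Conversely, if $\alpha$ and $\beta$ are isomorphisms then $(a,b)\mapsto(\alpha(a),\beta(b))$ is an isomorphism --- it is a homomorphism precisely because $\alpha\tilde{\theta}=\theta\Lambda_{\beta}$, and it is bijective since $\alpha,\beta$ are; any homomorphism completing the diagram differs from it by a map into the abelian group $\tilde{B}$ and so is again an isomorphism. (If one keeps the originally given $\gamma$, bijectivity of $\gamma$ from that of $\alpha,\beta$ is the five lemma, and then the compatibility is automatic.)

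\emph{The cyclic refinement.} Now let $A,B$ be cyclic and assume $\alpha,\beta$ are isomorphisms, so $B,\tilde{B}$ are cyclic of a common order $m$. Every automorphism of a cyclic group of order $m$ is a power map $x\mapsto x^{k}$ with $k\in(\mathbb{Z}/m)^{\times}$, giving canonical identifications $\Aut(B)\cong(\mathbb{Z}/m)^{\times}\cong\Aut(\tilde{B})$, and $\beta(x^{k})=\beta(x)^{k}$ shows that conjugation by $\beta$ is the identity of $(\mathbb{Z}/m)^{\times}$. Hence the compatibility condition becomes simply $\theta=\tilde{\theta}\circ\alpha$ as maps $A\to(\mathbb{Z}/m)^{\times}$. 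If $\gamma$ is an isomorphism this holds, and since $\alpha$ is onto we get $\im\theta=\tilde{\theta}(\alpha(A))=\im\tilde{\theta}$. For the converse, suppose $\im\theta=\im\tilde{\theta}=:H\leq(\mathbb{Z}/m)^{\times}$ and $A\cong\tilde{A}$ (cyclic of the same order); it suffices to exhibit isomorphisms realising the diagram. Fix generators $a_{0}$ of $A$ and $\tilde{a}_{0}$ of $\tilde{A}$; then $\theta(a_{0})$ and $\tilde{\theta}(\tilde{a}_{0})$ are generators of the cyclic group $H$, so $\theta(a_{0})=\tilde{\theta}(\tilde{a}_{0})^{s}$ for some $s$ coprime to $|H|$. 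Since $|H|$ divides $|A|$, the reduction $(\mathbb{Z}/|A|)^{\times}\to(\mathbb{Z}/|H|)^{\times}$ is surjective (adjust a lift of $s$ by a multiple of $|H|$ to clear the primes dividing $|A|$ but not $|H|$), so pick $t\in(\mathbb{Z}/|A|)^{\times}$ with $t\equiv s\bmod|H|$ and set $\alpha'\colon A\to\tilde{A}$, $a_{0}\mapsto\tilde{a}_{0}^{\,t}$. Then $\alpha'$ is an isomorphism with $\tilde{\theta}(\alpha'(a_{0}))=\tilde{\theta}(\tilde{a}_{0})^{t}=\tilde{\theta}(\tilde{a}_{0})^{s}=\theta(a_{0})$, hence $\theta=\tilde{\theta}\circ\alpha'$, and the first part yields an isomorphism $A\ltimes_{\theta}B\xrightarrow{\ \sim\ }\tilde{A}\ltimes_{\tilde{\theta}}\tilde{B}$.

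\emph{Main obstacle.} The only non-formal step is the last one: matching a generator of $\im\theta$ with a generator of $\im\tilde{\theta}$ and lifting the connecting unit from $(\mathbb{Z}/|H|)^{\times}$ to $(\mathbb{Z}/|A|)^{\times}$, which is exactly where cyclicity of $A$ is used; the accompanying check --- that $\alpha'$ automatically carries $\ker\theta$ onto $\ker\tilde{\theta}$ --- is immediate because a cyclic group has a unique subgroup of each index. Everything else reduces to the five lemma and the abelianness of $\tilde{B}$.
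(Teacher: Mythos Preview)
Your proof is correct and follows essentially the same route as the paper's: both obtain the compatibility condition $\alpha\tilde\theta=\theta\Lambda_\beta$ from a direct computation on the split extensions, observe that in the cyclic case $\Lambda_\beta$ becomes trivial so the condition reads $\theta=\tilde\theta\circ\alpha$, and then for the converse lift a unit from $(\mathbb{Z}/|H|)^\times$ to $(\mathbb{Z}/|A|)^\times$. Your parenthetical hint ``adjust a lift of $s$ by a multiple of $|H|$ to clear the primes dividing $|A|$ but not $|H|$'' is exactly the paper's explicit construction $m=u+xl$ (with $x$ the product of the relevant primes) together with its case-by-case verification that $\gcd(m,a)=1$.
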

\begin{proof}
  Write elements of $A\ltimes_\theta B$ as $(a,b)$ with $a\in A$ and $b\in B$; analogously for the second group. Note that if $\gamma$ is an isomorphism as in the diagram, then it must map $B$ to $\tilde B$, hence
  \[(a,b)\gamma=(a\alpha,b\beta+a\psi)\]
  for isomorphisms $\alpha\colon A\to \tilde A$ and $\beta\colon B\to \tilde B$, and some map $\psi\colon A\to \tilde B$; note that in this case also $(a,b)\to (a\alpha,b\beta)$ describes an isomorphism, so we can assume that $a\psi=0$ for all $a\in A$. Now a direct calculation shows that $\gamma$ is an isomorphism if and only if $\alpha$ and $\beta$ are isomorphisms, and $\beta\cdot (a\alpha)\tilde\theta=a\theta\cdot \beta$ for all $a\in A$, which we write as  $\alpha\tilde\theta=\theta\Lambda_\beta$.  Now suppose  $A=\mathbb{Z}/a=\tilde{A}$ and $B=\mathbb{Z}/b=\tilde B$ are cyclic, so $\alpha,\beta,\theta,\tilde\theta$ are defined by the image of 1. In particular, $\Aut(B)$ is abelian and $\alpha\tilde\theta=\theta\Lambda_\beta$ becomes $\alpha\tilde\theta=\theta$. This shows that  if $\gamma$ is an isomorphism, then $\tilde\theta$ and $\theta$ have the same image. Conversely, suppose $\tilde\theta$ and $\theta$ have the same image, say $(1)\tilde\theta$ is multiplication by $\tilde \ell \in(\mathbb{Z}/b)^\times$ and $(1)\theta$ is multiplication by $\ell\in (\mathbb{Z}/b)^\times$, with $\langle \tilde \ell\rangle = \langle \ell\rangle$. The latter implies that $\tilde\ell^u\equiv \ell\bmod b$ for some $u$ coprime to the order of $\ell$ in $(\mathbb{Z}/b)^\times$. We need to find an automorphism $\alpha\colon A\to A$, determined by some $m\equiv (1)\alpha\bmod a$, such that $(1)(1\alpha\tilde\theta)\equiv (1)(1\theta)\bmod b$, that is, $\tilde\ell^m \equiv\ell\equiv\tilde \ell^u\bmod b$. If we have found such an $m$, then define $\beta\colon B\to B$ by $1\beta=1$, and observe that $\gamma\colon (a,b)\mapsto (a\alpha,b\beta)$ describes the required isomorphism.

  We now prove that one can find such an $m$. Write $a=cl$ where $l$ is the order of $\ell$ in $(\mathbb{Z}/b)^\times$. We want an integer $m$ with $\gcd(a,m)=1$ and $m\equiv u\bmod l$: then $1\mapsto m$ defines an automorphism of $A$ and $\tilde\ell^m\equiv \tilde \ell^u\bmod b$. Note that $\gcd(u,l)=1$ since $|\ell|=|\ell^u|$, but we might have $\gcd(a,u)\ne 1$ so we cannot choose $m=u$ in general. However, we can choose $m=u+xl$ where $x$ is the product of the prime divisors of $c=a/l$ which do not divide $u$: in this case clearly $m\equiv u\bmod l$, so $\tilde\ell^m\equiv \tilde\ell^u\bmod b$. It remains to show that $\gcd(m,a)=1$. If a prime $p$ divides $l$, then $p\nmid u$ since $\gcd(u,l)=1$, thus $p\nmid m$ since $m=u+xl$. If $p\mid c$, then there are two cases: first, if $p\mid u$, then $p\nmid l$ since $\gcd(u,l)=1$; by definition, $p\nmid x$, so $p\nmid m$ since $m=u+xl$. Second, if $p\nmid u$, then $p\mid x$ by construction, hence $p\nmid m$. This proves that $\gcd(m,a)=1$.
\end{proof}

\begin{proof}[Proof of Theorem~\ref{thm:iso-meta-cyclic-BB}]
By Lemma~\ref{lem:Count-3} we may use $\hat{\mathcal{D}}$ as our dense set of group
orders. Let $G$ be a group of known factored order $n\in \hat{\mathcal{D}}$.  Using 
Theorem~\ref{thm:meta-cyclic-recog}, we detect whether $G=U\ltimes K$ with both $U$ 
and $K$ cyclic of known orders.  This solves the first part of Theorem~\ref{thm:iso-meta-cyclic-BB}.
 
Next we must consider the isomorphism problem for this class of groups.
Let $G$ and $\tilde{G}$ be two such groups of order $n$, with decompositions $G=U\ltimes K$ and $\tilde{G}=\tilde{U}\ltimes \tilde{K}$ provided by Theorem~\ref{thm:meta-cyclic-recog}.  We also have two-way isomorphisms
$\alpha:\mathbb{Z}/c\to U$ and $\tilde{\alpha}:\mathbb{Z}/\tilde{c}\to \tilde{U}$,
and one-way isomorphisms $\beta:\mathbb{Z}/d\to K$ and 
$\tilde{\beta}:\mathbb{Z}/\tilde{d}\to \tilde{K}$;  we may assume $c=\tilde{c}$
and $d=\tilde{d}$ as these are isomorphism invariants. Using Theorem~\ref{thm:deconj} on inputs $x=1\alpha$ and $y=1\beta$ (respectively
$\tilde{x}=1\tilde{\alpha}$ and $\tilde{y}=1\tilde{\beta}$) we can construct 
$\theta\colon\mathbb{Z}/c\to \Aut(\mathbb{Z}/d)$ 
and $\tilde{\theta}\colon\mathbb{Z}/c\to \Aut(\mathbb{Z}/d)$ and one-way isomorphisms
\[\gamma\colon \mathbb{Z}/c\ltimes_{\theta} \mathbb{Z}/d\to G\quad\text{and}\quad \tilde{\gamma}\colon\mathbb{Z}/c\ltimes_{\tilde{\theta}} \mathbb{Z}/d\to \tilde{G}.\]
To decide isomorphism it is sufficient to show that the image $V$ of $\theta$ 
equals the image $\tilde{V}$ of $\tilde{\theta}$, see Lemma~\ref{lem:meta-cyclic-iso}.
Indeed, it suffices to show
$1\tilde{\theta}\in V$ and that $|V|=|1\tilde{\theta}|$.  
For that we note that both $V$ and $\tilde{V}$ are cyclic
$\pi(\leq \log\log n)$-groups, so Theorem~\ref{thm:iso-abelian-BB} determines 
their orders and equips $V$ with a constructive membership test, see Theorem~\ref{prop:abelian-member}.  
Hence we can decide if $G\cong \tilde{G}$, without providing isomorphisms
in either direction.
\end{proof}

\section{Closing remarks}



\subsection{Proof of Theorem~\ref{thm:iso-almostall-Cayley}}\label{sec14}
The brute-force isomorphism test between groups $G$ and $\tilde{G}$ assigns a 
minimal generating set $(g_1,\ldots,g_{d(G)})$ for $G$ to an arbitrary one 
$(\tilde{g}_1,\ldots,\tilde{g}_{d(G)})$ in $\tilde{G}$, and tests 
whether that assignment induces an isomorphism $G\to \tilde{G}$.  In more detail,
assume the groups are written as regular permutation representations. 
Using \cite{KL:quo}*{P10}, we can compute a presentation $\langle X\mid R\rangle$ 
for $G$.   All that is done in time $O(|G|(\log |G|)^c)$ for some  constant $c$.
 For each map $\tau\colon X\to \tilde{G}$ 
and each SLP $\sigma\in R$, test if $X\tau\sigma\equiv 1$ in $\tilde{G}$; that test takes time 
$O(|G|^{d(G)+1}(\log |G|)^c)\subset O(|G|^{d(G)+1+o(1)})$.  It follows from \cite{guralnick}*{Theorem A} 
that for every group $G$ of $k$-free order $n$ we have  $d(G)\leq k+1$. 
Thus, the complexity of brute-force isomorphism testing of groups of order $n$ is
\begin{align}\label{eq:complexity}
	O(n^{\mu(n)+2+o(1)}) \quad\text{where}\quad \mu(n)  = \max\{k : \textnormal{ $n$ is not $k$-free }\}.
\end{align}
Now fix a constant $k>0$. Restricting this isomorphism test to group orders $n$ with  $\mu(n)\leq k$, it runs in time polynomial in the input size for groups given by Cayley tables. By \cite{num}*{(2)}, 
the density of $k$-free integers tends towards $1/\zeta(k)$, which approaches $1$ as $k\to\infty$. 
Thus, for a given $\varepsilon>0$, choose $k$ large enough such that the density 
of $k$-free integers is at least $1-\varepsilon$. \hfill $\Box$

\subsection{Type theory makes promise problems into decision problems}\label{sec:Turing}
We have done the unusual step of working with Type Theory, where most preceding theory
has developed in terms of Turing Machines.  There are several good reasons for this, 
including how accurately Type Theory models contemporary programming.
Most recent programming languages, such as C\#, Haskell,  ML languages, and Scala (Dotty) 
have had their Type Theory machine verified, thus offering some confidence that 
calculations in these languages can be trusted -- a claim that traditional languages 
like Fortran and C cannot provide.  Such verification should seem important to the 
mathematical community and encourage greater adoption of Type Theory models.  
Still, there is a further more important obstacle we have faced when attempting to use conventional
Turing models; in the end, it was that obstacle that forced our deviation.

\subsubsection*{Black-box groups are not all groups of black-box type.} 
Observe that every group of black-box type is a black-box group in the sense of Babai \& Szemered\'i,
see \cite{Seress}*{Chapter~2}, but strictly speaking the converse may not hold.  That
is because a black-box group is a model of a Turing Machine in which groups are
input by strings of uniform (or bounded) length, and oracles perform the group operations
at unit cost.  However, not all strings of input of the appropriate length are required
to describe a group. So in principle a black-box group problem asks the harder question 
of computing with groups of unknown types, \emph{as well as rejecting mal-formed inputs}.
This does not only mean to reject unparseable strings -- but to detect correctly that
reasonable inputs are not encoding something weaker than a group, such as a proper semigroup or 
a nonassociative quasigroup.  If an algorithm for a black-box group merely assumes
the input is a group, then there is no theoretical foundation to guarantee the correctness of the outcome.
All black-box group algorithms must somehow distinguish groups from facsimiles.

In formal terms, the Babai \& Szemered\'i model of black-box groups forces many problems
about these groups to be \emph{promise problems}, that is, partitions of the set of all
strings $\Sigma^*$ over an alphabet $\Sigma$ into strings that are \emph{accepted}, 
\emph{rejected}, and \emph{ignored}. The usual \emph{decision problems} have no ignored 
strings and those are the partitions of $\Sigma^*$ that are customary in the definitions 
of P and NP. For a detailed account  of the difference between such problems see \cite{promise}.  
The promise problem perspective is a perfectly appropriate and relevant approach 
to the study of groups,  but we shall like to be clear about the difference and 
explain we are working explicitly in the decision problem side.

\subsubsection*{Strategies for rejecting  mal-formed inputs.} The literature is 
concerned with the difference of promise and decision problems and offers several 
strategies, see for example \cite{promise} and bibliography contained therein.  For instance, 
for problems in NP, a certificate can be provided to test whether the conclusion is independent 
of the computation.  Unfortunately, black-box group isomorphism is not known to be 
in NP or co-NP \cite{BS84}, certificates do not yet exist. Another approach says
if an algorithm is known to run in time $T(n)$ on well-formed inputs of length $n$, 
then an input of length $n$ that requires more steps must be mal-formed.  On a practical
level, bounds on $T(n)$ are usually asymptotic and thus not immediately useful in predicting
for a specific $n$ when $T(n)$ has been exceeded.  Even if $T(n)$ is carefully modeled,
this does not prevent a mal-formed input tricking an algorithm into accepting the 
string in time $T(n)$, which means the algorithm accepts the wrong language. 
Another approach is to assume the operation oracles distinguish well-formed
inputs, for example, by rejecting inputs that are not in the hypothetical group.  Yet 
that assumption can have the effect of requiring that the oracles provide a 
membership test for the group.  Membership tests are usually a goal for a black-box group algorithm, not
a starting assumption.

\subsubsection*{Type theory avoids mal-formed inputs.} Since our focus is on isomorphism testing, we cannot apply the above remedies. So we
have appealed to a computational model without those issues.
In a Type Theory model, all inputs are terms of a known type.  In particular,
a group $G\colon\code{Group[A]}$ comes with operations, generators, and, crucially, certificates 
of the axioms of a group.  Because of this \emph{it is impossible 
to provide inhabitants of \code{Group[A]} that
are not groups!}  Hence, within our proofs of the various algorithms, we can safely make
that assumption about our inputs.   

\subsubsection*{Reconciliation.} Even so, we wish to emphasize that many, if not all,  
``black-box group algorithms'' make at least implicit assumptions that all inputs are guaranteed
to define groups.  Some authors adopt language such as ``grey box groups'' or provide 
surrounding discussions hypothesizing a context such that the elements are drawn from some 
larger class of group which is not black-box, such as $\mathrm{GL}_d(\mathbb{F}_q)$ or 
$\mathrm{Sym}(\Omega)$, and therefore the resulting input can be trusted to be a group; 
see for example the discussions in \cite{handbook}*{Section~3.1.4}.  
Such examples are in fact equivalent to discussing algorithms for groups of black-box type, 
rather than strict black-box group algorithms.  So with Type Theory we are not doing more 
than what is done elsewhere, we are simply clarifying the model so that it can be
properly assessed within the polynomial-time hierarchy. 

\subsubsection*{By-products.} Independent of our technical concerns, the adoption of types has proved fruitful in 
proving stronger theorems than expected.  For example, the norm for decades has been to describe group
membership test as being relative to some larger group, for example, given an subgroup 
$H\leq \mathrm{Sym}(\Omega)$ and $g\in \mathrm{Sym}(\Omega)$, decide if $g\in H$, see \cite{Seress}*{p.~56}.
Notice, however, that our definition of {\em constructive membership} (see Section \ref{secMS}) decides 
membership for completely {\em arbitrary} input $x:\code{A}$, whether or not $x$ lies
in some larger group.  So we move from the promise problem hierarchy to the decision problem 
hierarchy.  Arguably this is the behavior one intuitively expects by a function
that claims to decide membership in a set, and it is the kind that can be programmed into a
computer leaving no input with unspecified behavior.  Our form
of absolute membership testing was attained in Proposition~\ref{prop:abelian-member} and 
Proposition~\ref{prop:abelian-pres} as a direct result of appealing strictly to types.

{\small
\bibliographystyle{abbrv}

\begin{thebibliography}{10}


  
\bibitem{BetaReduction}
  B.\ Accattoli and U.\ Dal Lago.
  Beta reduction is invariant, indeed.
  Proc.\ of the Joint Meeting of the 23rd EACSL
      Annual Conference on Comp.\ Sci.\ Logic (CSL) and the Twenty-Ninth
      Annual ACM/IEEE Symposium on Logic in Computer Science (LICS), ACM, New York, 2014. Article No.\ 8, pp.\ 1--10.
      




\bibitem{Adleman}
 L.\ M.\ Adleman.
 Algorithmic number theory--the complexity contribution. 35th Annual Symposium on Foundations of Comp.\ Sci.\ (Santa Fe, NM, 1994), 88--113, IEEE Comput.\ Soc.\ Press, Los Alamitos, 1994.


\bibitem{Babai:quasi}
  L.\ Babai. Graph isomorphism in quasi-polynomial time. arXiv:1512.03547.

      \bibitem{BCGQ}
      L.\ Babai, P.\ Codenotti, J.\ A.\ Grochow, and Y.\ Qiao.
      Code equivalence and group isomorphism. Proc.\ of the 22nd Annual ACM-SIAM Symposium on Discrete Algorithms, 1395--1408, SIAM, Philadelphia, 2011.
      
\bibitem{BBS}
  L.\ Babai, R.\ Beals, {\'A}. Seress,
  Polynomial-time theory of matrix groups,
  STOC, 2009.
  
\bibitem{BS84}
L.\ Babai and E.\ Szemer\'edi.
On the complexity of matrix group problems {I}.
In Proc.\ 25th IEEE Sympos.\ Foundations Comp.\ Sci., 229--240, 1984.


    \bibitem{BQ:tower}
      L.\ Babai and Y.\ Qiao.
      Polynomial-time isomorphism test for groups with abelian Sylow towers. 29th International Symposium on Theoretical Aspects of Computer Science, 453--464, LIPIcs.\ Leibniz Int.\ Proc.\ Inform., 14, Schloss Dagstuhl. Leibniz-Zent.\ Inform., Wadern, 2012.

      
\bibitem{BNV:enum}
  S.\ R.\ Blackburn, P.\ M.\ Neumann, G.\ Venkataraman.
  Enumeration of finite groups.
  Cambridge Press 2007.


  
\bibitem{BMW}
  P.\ A.\ Brooksbank, J.\ Maglione, and J.\ B.\ Wilson. A fast isomorphism test for groups whose Lie algebra has genus~2. J.\ Algebra 473 (2017), 545--590.


\bibitem{Buhler}
  J.\ P.\ Buhler, H.\ W.\ Lenstra Jr., and C.\ Pomerance.
  Factoring integers with the number field sieve. 
The development of the number field sieve. Lecture Notes in Math.\ 1554 (1993), 50--94.


\bibitem{cf}
H.\ Dietrich and B.\ Eick.
On the Groups of Cube-Free Order. J.\ Algebra 292 (2005) 122--137, with addendum in J.\ Algebra 367 (2012) 247--248.
     
\bibitem{PART2}
 H.\ Dietrich, J.\ B.\ Wilson. Isomorphism testing of groups of cube-free order. (submitted).

  
\bibitem{num} 
R.\ L.\ Duncan.
On the density of the $k$-free integers.
Fibonacci Quart.\ 7 (1969) 140--142.

 
\bibitem{ErdosPalfy}
  P.\ Erd\H{o}s and P.\ P\'alfy.
  On the orders of directly indecomposable groups. Paul Erd\H{o}s memorial collection. Discrete Math.\ 200 (1999) 165--179.

\bibitem{farmer}
 W.\ M.\ Farmer. The seven virtues of simple type theory. J.\ Appl.\ Log.\ {\bf 6} (2008) 267--286.

\bibitem{promise}
O.\ Goldreich.
On Promise Problems (a survey in memory of Shimon Even [1935-2004])
Elect.\ Colloquium on Comp.\ Comp.\ Report No.\ 18 (2005).
 


\bibitem{OddOrder}
 G.\ Gonthier, A.\ Asperti, J.\ Avigad, et al.\  
A machine-checked proof of the odd order theorem.
Interactive theorem proving. Lecture Notes in Comput.\ Sci.\ 7998. Sprinter, Heidelberg (2013) 163--179.

\bibitem{Gordan}
  D.\ M.\ Gordon.
  Discrete logarithms in ${\rm GF}(p)$ using the number field sieve. SIAM J.\ Discrete Math. 6 (1993) 124--138.

\bibitem{grayson}  
D.\ R.\ Grayson. An introduction to univalent foundations for mathematicians. Bull.\ Amer.\ Math.\ Soc.\ {\bf 55} (2018) 427--450.


\bibitem{Grochow}
  J.\ A.\ Grochow and Y.\ Qiao. Algorithms for group isomorphism via group extensions and cohomology. SIAM J.\ Comput.\ 46 (2017) 1153--1216.

  
\bibitem{gap} 
GAP -- Groups, Algorithms and Programming. Available at \scalebox{0.9}{\url{gap-system.org}}.


\bibitem{guralnick}
  R.\ M.\ Guralnick.
On the number of generators of a finite group.
Arch.\ Math.\ (Basel) 53 (1989), no.\ 6, 521--523. 
   

\bibitem{handbook}
D.\ F.\ Holt, B.\ Eick, and E.\ A.\ O'Brien.
Handbook of computational group theory. Discrete Mathematics and its Applications (Boca Raton). Chapman \& Hall/CRC, Boca Raton, FL, 2005.

\bibitem{Iliopoulos}
  C.\ S.\ Iliopoulos. Computing in general abelian groups is hard. Theoret.\ Comput. Sci. 41 (1985) 81--93.

\bibitem{KL:quo}
  W.\ M.\ Kantor and E.\ M.\ Luks.
  Computing in quotient groups. Proceedings 22nd ACM Symposium on Theory of Computing (1990) 524--534.

\bibitem{KLM}
  W. M.\ Kantor, E.\ M.\ Luks, and P.\ D.\ Mark.
  Sylow subgroups in parallel. J.\ Algorithms 31 (1999) 132--195.

     
\bibitem{KP}
G.\ Karagiorgos and D.\ Poulakis. An algorithm for computing a basis of a finite abelian group. Algebraic informatics, 174--184, Lecture Notes in Comput.\ Sci., 6742, Springer, Heidelberg, 2011.

\bibitem{kavitha}
  T.\ Kavitha. Linear time algorithms for abelian group isomorphism and related problems. J.\ Comput.\ System Sci.\ {\bf 73} (2007) 986--996.
   
\bibitem{NFS}
  J.\ D.\ Lee and R.\ Venkatesan.
  Rigorous analysis of a randomised number field sieve. J.\ Number Theory 187 (2018) 92--159.

\bibitem{Traces}
  B.\ D.\ McKay and A.\ Piperno. Practical graph isomorphism, II. J.\ Symbolic Comput.\ 60 (2014) 94--112.
  
\bibitem{Luks:mat}
  E.\ M.\ Luks. Computing in Solvable Matrix Groups. In 
  Proceedings 33rd Annual Symposium on  Foundations of Computer Science (1992) 111--120. 
  
\bibitem{MR:primes}
J-P.\ Massias, G.\ Robin, 
Bornes effectives pour certaines fonctions concernant les nombres premiers. 
J.\ Th\'eor.\ Nombres Bordeaux 8 (1996) 215--242.
    

\bibitem{rob}
D.\ J.\ S.\ Robinson,
A Course in the Theory of Groups.
Springer-Verlag, 1982. 

 
 \bibitem{euler}
   J.\ B.\ Rosser and L.\ Schoenfeld. Approximate formulas for some functions of prime numbers. Illinois J.\ Math.\ 6 (1962) 64--94.

 \bibitem{Seress}
   \'A.\ Seress. Permutation group algorithms. Cambridge University Press 152, Cambridge, 2003.
   
\bibitem{Teske}
  E.\ Teske. The Pohlig-Hellman method generalized for group structure
  computation. J.\ Symbolic Comput.\ 27 (1999) 521--534.

 
\bibitem{HoTT}
The Univalent Foundations Program. {Homotopy type theory---univalent foundations of mathematics. The Univalent Foundations Program, Princeton, NJ; Institute
  for Advanced Study (IAS), Princeton, NJ}, 2013.
 

\bibitem{Wilson:RemakII}
J.\ B.\ Wilson.
Finding direct product decompositions in polynomial time. 
arXiv:1005.0548.

\bibitem{Wilson:profile}
J.\ B.\ Wilson. 
The threshold for subgroup profiles to agree is $\log n-2$.
arXiv:1612.01444.
 
  
 \end{thebibliography}

}

\end{document}